\title{Symmetries and adjunction inequalities for knot Floer homology}
\author{Bijan Sahamie} 
\address{Mathematisches Institut der LMU M\"unchen, 
Theresienstrasse 39, 80333 M\"unchen Germany}
\email{sahamie@math.lmu.de}
\urladdr{http://www.math.lmu.de/~sahamie}
\theoremstyle{plain} 
\newtheorem{theorem}{Theorem}[section]   
\newtheorem{lem}[theorem]{Lemma}         
\newtheorem{prop}[theorem]{Proposition}
\newtheorem{cor}[theorem]{Corollary}
\theoremstyle{definition}
\newtheorem{definition}[theorem]{Definition}   
\newtheorem{rem}{Remark}
\newcommand{\Z}{\mathbb{Z}}
\newcommand{\R}{\mathbb{R}}
\newcommand{\C}{\mathbb{C}}
\newcommand{\Q}{\mathbb{Q}}
\newcommand{\delhatw}{\widehat{\partial}^w}
\newcommand{\fgo}{\mathbb{F}_g^0}
\newcommand{\fge}{\mathbb{F}_g^e}
\newcommand{\fgepo}{\mathbb{F}_g^{e+1}}
\newcommand{\foe}{\mathbb{F}_0^e}
\newcommand{\afat}{{\boldsymbol\alpha}}
\newcommand{\bfat}{{\boldsymbol\beta}}
\newcommand{\gfat}{{\boldsymbol\gamma}}
\newcommand{\xfat}{{\boldsymbol x}}
\newcommand{\yfat}{{\boldsymbol y}}
\newcommand{\zfat}{{\boldsymbol z}}
\numberwithin{equation}{section}
\begin{document}
\newcommand{\sone}{\mathbb{S}^1}
\newcommand{\lra}{\longrightarrow}
\newcommand{\lmt}{\longmapsto}
%

%
%
\newcommand{\ztwo}{\mathbb{Z}_2}
\newcommand{\RP}{\mbox{\rm RP}}
\newcommand{\spinc}{\mbox{\rm Spin}^c}
\def\co{\colon\thinspace}
\newcommand{\stwo}{\mathbb{S}^2}
\newcommand{\pd}{\text{PD}}
\newcommand{\sprung}{\\[0.3cm]}
\newcommand{\fund}{\pi_1}
\newcommand{\kerg}{\mbox{\rm Ker}_G\,}
\def\Ker#1{\mbox{\rm Ker}_{#1}\,}
\newcommand{\im}{\mbox{\rm Im}\,}
\newcommand{\id}{\mbox{\rm id}}
\newcommand{\sothree}{\mathbb{SO}_3}
\newcommand{\sthree}{\mathbb{S}^{3}}
\newcommand{\disc}{\mbox{\rm D}}
\newcommand{\systwo}{C^{\infty}(Y,\stwo)}
\newcommand{\inner}{\text{int}}
\newcommand{\bk}{\backslash}

%
%
\newcommand{\contstand}{(\mathbb{R}^3,\xi_0)}
\newcommand{\cont}{(M,\xi)}
\newcommand{\sym}{\xi_{sym}}
\newcommand{\xistd}{\xi_{std}}
\newcommand{\cyl}{\mbox{\rm Cyl}_{r_0}^\mu}
\newcommand{\stap}{\mbox{\rm S}_+}
\newcommand{\stam}{\mbox{\rm S}_-}
\newcommand{\stapm}{\mbox{\rm S}_\pm}
\newcommand{\modulo}{\;\;\mbox{\rm mod}\,}

%
%
\newcommand{\crit}{\mbox{\rm Crit}}
\newcommand{\MC}{\mbox{\rm MC}}
\newcommand{\bmorse}{\partial^{\mbox{\rm \begin{tiny}M\!C\end{tiny}}}}
\newcommand{\M}{\mathcal{M}}
\newcommand{\stable}{\mbox{\rm W}^s}
\newcommand{\unstable}{\mbox{\rm W}^u}
\newcommand{\ind}{\mbox{\rm ind}}
\newcommand{\Mhat}{\widehat{\M}}
\newcommand{\What}{\widehat{W}}
\newcommand{\modphi}{\mathcal{M}_\phi}
%
%
%

%
%
\newcommand{\moduli}{\mathcal{M}_{J_s}(x,y)}
\newcommand{\modulit}{\mathcal{M}_{J_{s,t}}}
\newcommand{\modulittau}{\mathcal{M}_{J_{s,t(\tau)}}}
\newcommand{\modulittaubig}{\mathcal{M}_{J_{s,t,\tau}}}
\newcommand{\modhat}{\widehat{\mathcal{M}}_{J_s}(x,y)}
\newcommand{\modulihat}{\widehat{\mathcal{M}}}
\newcommand{\modhatone}{\widehat{\mathcal{M}}_{J_{s,1}}}
\newcommand{\modhatzero}{\widehat{\mathcal{M}}_{J_{s,0}}}
\newcommand{\modhatphi}{\widehat{\mathcal{M}}_{\phi}}
\newcommand{\moduliiso}{\mathcal{M}^{\Psi_t}}
\newcommand{\modspace}{\mathcal{M}}
\newcommand{\phihat}{\widehat{\phi}}
\newcommand{\Phiinfty}{\Phi^\infty}
\newcommand{\Dhat}{\widehat{\D}}
\newcommand{\cops}{\partial_{J_s}}
\newcommand{\talpha}{\mathbb{T}_{\boldsymbol{\alpha}}}
\newcommand{\tbeta}{\mathbb{T}_{\boldsymbol{\beta}}}
\newcommand{\tgamma}{\mathbb{T}_{\boldsymbol{\gamma}}}
\def\marge#1{\marginpar{\scriptsize{#1}}}
\def\br#1{\begin{rotate}{90}#1\end{rotate}}
\newcommand{\hfhat}{\widehat{\mbox{\rm HF}}}
\newcommand{\sfh}{\mbox{\rm SFH}}
\newcommand{\sbottom}{\underline{s}}
\newcommand{\tbottom}{\underline{t}}
\newcommand{\cfhat}{\widehat{\mbox{\rm CF}}}
\newcommand{\cfinfty}{\mbox{\rm CF}^\infty}
\def\cfbb#1{\mbox{\rm CF}^+_{\leq #1}}
\def\cfbo#1{\mbox{\rm CF}^-_{\geq -#1}}
\newcommand{\cfleq}{\mbox{\rm CF}^{\leq 0}}
\newcommand{\cfcirc}{\mbox{\rm CF}^\circ}
\newcommand{\cfkinfty}{\mbox{\rm CFK}^{\infty}}
\newcommand{\cfkhat}{\widehat{\mbox{\rm CFK}}}
\newcommand{\cfkminus}{\mbox{\rm CFK}^{-}}
\newcommand{\cfkpstar}{\mbox{\rm CFK}^{+,*}}
\newcommand{\cfkostar}{\mbox{\rm CFK}^{0,*}}
\newcommand{\hfkcirc}{\mbox{\rm HFK}^\circ}
\newcommand{\hfkhat}{\widehat{\mbox{\rm HFK}}}
\newcommand{\hfkplus}{\mbox{\rm HFK}^+}
\newcommand{\hfkminus}{\mbox{\rm HFK}^-}

\newcommand{\cfkoc}{\mbox{\rm CFK}^{\bullet,\circ}}
\newcommand{\hfkoc}{\mbox{\rm HFK}^{\bullet,\circ}}
\newcommand{\cfkom}{\mbox{\rm CFK}^{\bullet,-}}
\newcommand{\hfkom}{\mbox{\rm HFK}^{\bullet,-}}
\newcommand{\hfkmo}{\mbox{\rm HFK}^{-,\bullet}}
\newcommand{\hfkpo}{\mbox{\rm HFK}^{+,\bullet}}
\newcommand{\hfkio}{\mbox{\rm HFK}^{\infty,\bullet}}
\newcommand{\hfkoo}{\mbox{\rm HFK}^{\bullet,\bullet}}
\newcommand{\cfkop}{\mbox{\rm CFK}^{\bullet,+}}
\newcommand{\hfkop}{\mbox{\rm HFK}^{\bullet,+}}
\newcommand{\hfkoi}{\mbox{\rm HFK}^{\bullet,\infty}}
\newcommand{\Foo}{F^{\bullet,\bullet}}
\newcommand{\Foc}{F^{\bullet,\circ}}
\newcommand{\foc}{f^{\bullet,\circ}}
\newcommand{\Fco}{F^{\circ,\bullet}}
\newcommand{\Fom}{F^{\bullet,-}}
\newcommand{\Foi}{F^{\bullet,\infty}}
\newcommand{\mMhat}{\widehat{\mathcal{M}}}
\newcommand{\cfoo}{f^{\bullet,\bullet}}

\newcommand{\fhat}{\widehat{f}}

\newcommand{\cfkco}{\mbox{\rm CFK}^{\circ,\bullet}}

\newcommand{\Goo}{G^{\bullet,\bullet}}
\newcommand{\Hoo}{H^{\bullet,\bullet}}

\def\cfinftyfilt#1#2{\mbox{\rm CFK}^{#1,#2}}
\newcommand{\hfinfty}{\mbox{\rm HF}^\infty}
\newcommand{\hfinftwist}{\underline{\mbox{\rm {HF}}^\infty}}
\newcommand{\lfh}{\widehat{\mbox{\rm HFL}}}
\newcommand{\fcirc}{f^\circ}
\newcommand{\chat}{\widehat{c}}
\newcommand{\Fhat}{\widehat{F}}
\newcommand{\Jhat}{\widehat{J}}
\newcommand{\That}{\widehat{T}}
\newcommand{\Hhat}{\widehat{H}}
\newcommand{\Fcirc}{F^\circ}
\newcommand{\hattheta}{\widehat{\Theta}}
\newcommand{\shattheta}{\widehat{\theta}}
\newcommand{\cfminus}{\mbox{\rm CF}^-}
\newcommand{\hfminus}{\mbox{\rm HF}^-}
\newcommand{\cfplus}{\mbox{\rm CF}^+}
\newcommand{\hfplus}{\mbox{\rm HF}^+}
\newcommand{\hfcirc}{\mbox{\rm HF}^\circ}
\def\hfbb#1{\hfplus_{\leq #1}}
\def\hfbo#1{\hfminus_{\geq -#1}}
\newcommand{\Hs}{\mathcal{H}_s}
\newcommand{\mh}{\mathcal{H}}
\newcommand{\gr}{\mbox{gr}}
\newcommand{\parinfty}{\partial^\infty}
\newcommand{\parhat}{\widehat{\partial}}
\newcommand{\parplus}{\partial^+}
\newcommand{\parminus}{\partial^-}
\newcommand{\symg}{\mbox{\rm Sym}^g(\Sigma)}
\newcommand{\symgg}{\mbox{\rm Sym}^{2g}(\Sigma)}
\newcommand{\symc}{\mbox{\rm Sym}^g(\mathbb{C})}
\newcommand{\pitwo}{\pi_2}
\newcommand{\pitwoham}{\pi_2^{\Psi_t}}
\newcommand{\symcon}{\mbox{\rm Sym}^g(\Sigma_1\#\Sigma_2)}
\newcommand{\symgone}{\mbox{\rm Sym}^{g_1}(\Sigma_1)}
\newcommand{\symgtwo}{\mbox{\rm Sym}^{g_2}(\Sigma_2)}
\newcommand{\symgmo}{\mbox{\rm Sym}^{g-1}(\Sigma)}
\newcommand{\symggmo}{\mbox{\rm Sym}^{2g-1}(\Sigma)}
\newcommand{\dom}{\mathcal{D}}
\newcommand{\bigtrans}{\left.\bigcap\hspace{-0.27cm}\right|\hspace{0.1cm}}
\newcommand{\tlt}{\times\ldots\times}
\newcommand{\Isotopy}{\Gamma^\infty_{\Psi_t}}
\newcommand{\orient}{\mathnormal{o}}
\newcommand{\ob}{\mathnormal{ob}}
\newcommand{\SL}{\mbox{\rm SL}}
\newcommand{\rhotilde}{\widetilde{\rho}}
\newcommand{\domstar}{\dom_*}
\newcommand{\domststar}{\dom_{**}}
\newcommand{\betaprime}{\beta'}
\newcommand{\betapp}{\beta''}
\newcommand{\betatilde}{\widetilde{\beta}}
\newcommand{\deltaprime}{\delta'}
\newcommand{\tbetaprime}{\mathbb{T}_{\beta'}}
\newcommand{\talphaprime}{\mathbb{T}_{\alpha'}}
\newcommand{\tdelta}{\mathbb{T}_{\delta}}
\newcommand{\phidelta}{\phi^{\Delta}}
\newcommand{\domtilde}{\widetilde{\dom}}
\newcommand{\loss}{\widehat{\mathcal{L}}}
\newcommand{\bargamma}{\overline{\Gamma}}
\newcommand{\alphaprime}{\alpha'}
\newcommand{\ga}{\Gamma_{\alpha;\beta',\beta''}}
\newcommand{\gbone}{\Gamma_{\alpha;\beta,\widetilde{\beta}}^{w,1}}
\newcommand{\gbtwo}{\Gamma_{\alpha;\beta,\widetilde{\beta}}^{w,2}}
\newcommand{\gbthree}{\Gamma_{\alpha;\beta,\widetilde{\beta}}^{w,3}}
\newcommand{\gbfour}{\Gamma_{\alpha;\beta,\widetilde{\beta}}^{w,4}}
\newcommand{\gcone}{\Gamma_{\alpha;\delta,\delta'}^{w,1}}
\newcommand{\gctwo}{\Gamma_{\alpha;\delta,\delta'}^{w,2}}
\newcommand{\gcthree}{\Gamma_{\alpha;\delta,\delta'}^{w,3}}
\newcommand{\gcfour}{\Gamma_{\alpha;\delta,\delta'}^{w,4}}

\newcommand{\eab}{\epsilon_{\alpha\beta}}
\newcommand{\ead}{\epsilon_{\alpha\delta}}
\newcommand{\hqhat}{\widehat{\mbox{\rm HQ}}}
\newcommand{\cupb}{\cup_\partial}
\newcommand{\oa}{\overline{a}}
\newcommand{\ab}{\alpha\beta}
\newcommand{\ad}{\alpha\delta}
\newcommand{\adb}{\alpha\delta\beta}
\newcommand{\tila}{\widetilde{a}}
\newcommand{\tilb}{\widetilde{b}}
\def\pdehn#1#2{D_{#1}^{+,#2}} 
\def\ndehn#1#2{D_{#1}^{-,#2}} 
\newcommand{\fraks}{\mathfrak{s}}
\newcommand{\frakt}{\mathfrak{t}}
\newcommand{\Ghat}{\widehat{G}}
\newcommand{\pointswap}{\phi^{\rm PS}}
\newcommand{\mA}{\mathcal{A}}
\newcommand{\mJ}{\mathcal{J}}
\newcommand{\mM}{\mathcal{M}}
\newcommand{\mN}{\mathcal{N}}
\newcommand{\mH}{\mathcal{H}}
\newcommand{\sh}{\mbox{\rm h}}
\newcommand{\shb}{\mbox{\rm \underline{h}}}

%
%
\newcommand{\bund}{\mathcal{P}}
\newcommand{\diag}{\Delta^{\!\!E}}
\newcommand{\inter}{m_{\diag}}
\newcommand{\ozs}{Ozsv\'{a}th}
\newcommand{\sza}{Szab\'{o}}
\newcommand{\mS}{\mathcal{S}}
\newcommand{\oA}{\overline{A}}
\begin{abstract}
We derive symmetries and adjunction inequalities of the knot Floer 
homology groups which appear to be especially interesting for 
homologically essential knots. Furthermore, we obtain an adjunction
inequality for cobordism maps in knot Floer homologies. We demonstrate 
the adjunction inequalities and symmetries in explicit calculations 
which recover some of the main results from \cite{eftek} on 
longitude Floer homology and also give rise to vanishing results 
on knot Floer homologies. Furthermore, using symmetries we prove 
that the knot Floer homology of a fiber distinguishes 
$\stwo\times\sone$ from other $\sone$-bundles over surfaces.
\end{abstract}
\maketitle
\section{Introduction}\label{parone}
\noindent Heegaard Floer homology was introduced by Peter Ozsv\'ath and 
Zoltan Szab\'o in \cite{OsZa01} (see~\cite{Saha02} for a detailed 
introduction) and has turned out to be a useful tool in the study of 
low-dimensional topology. They also defined variants of this 
homology theory which are topological invariants of a pair $(Y,K)$ where $Y$ is 
a closed, oriented $3$-manifold and $K\subset Y$ a null-homologous knot 
(see \cite{OsZa04}). 
Knot Floer homologies have also proven to be very useful in knot theoretic
applications, the filtration on these groups carrying a lot of geometric 
information. In \cite{Saha}, we made the observation that the knot Floer 
homology is not restricted to homologically trivial knots. For $[K]=0$ the
knot theoretic information was especially encoded into a filtration constructed 
using a Seifert surface of $K$. In the case $[K]\not=0$ this filtration gets lost. 
The information given by the filtration, however, do not seem to get lost (at least
not fully), but are shifted into the $\spinc$-refinements of the groups. 
We find it natural to also study the groups for $[K]\not=0$. The first step 
in this study is to provide
tools making the homology groups accessible to computations. Recall, that 
computations of Heegaard Floer homologies are usually done
using surgery exact triangles and adjunction inequalities of the groups 
involved and of the maps induced by cobordisms. The groups $\hfkhat$ may 
be equipped with an adjunction inequality coming from sutured Floer 
homology (see~\cite[Theorem 2]{AJU2}). Furthermore, Juh\'{a}sz's work
on cobordisms maps for sutured Floer homologies provide a notion of cobordism
maps for the $\hfkhat$-case, as well (see~\cite{JuCOB}). In \cite{Saha04}, we 
introduced cobordism maps for various versions of knot Floer 
homology, especially for $\hfkoc$ (i.e.~$\hfkhat$, $\hfkom$ and $\hfkoi$, see~\S\ref{knotfloerhomology} for a definition).\vspace{0.3cm}\\
In this article, we study symmetry properties of knot Floer homology 
groups $\hfkoc$, adjunction inequalities for $\hfkoc$ and adjunction 
inequalities for cobordism maps of these groups. After discussing these 
concepts, we present some implications
of these results which are meant as a demonstration how these techniques may 
be applied in computations. We have to point out that the computational 
results we present (for the $\hfkhat$-homology) can be alternatively derived 
(and some strengthened) from work of Friedl, Juh\'{a}sz and Rasmussen on sutured 
Floer homology (see~\cite[Proposition~7.7]{FJR}).

\subsection{Adjunction inequalities}
We prove the following result. For the $\hfkhat$-case, cf.~also 
\cite[Theorem 2]{AJU2}.
\begin{theorem}\label{myadjuncineq} Let $Y$ be 
a closed, oriented $3$-manifold with knot $K$ and $F\subset Y$ a closed, oriented 
surface with genus $g(F)>0$ such that $\#(K\cap F)\leq1$ and $\#(F,K)\leq0$. 
 \begin{enumerate}
\item[(i)] If $K$ and $F$ intersect, the non-vanishing of $\hfkoc(Y,K;\fraks)$ 
implies that
\[
  -\bigl<c_1(\fraks),[F]\bigr>\leq 2g(F)-2.
\]
\item[(ii)] If $K$ and $F$ are disjoint, the non-vanishing of $\hfkoc(Y,K;\fraks)$ 
implies that
\[
  \bigl|\bigl<c_1(\fraks),[F]\bigr>\bigr|\leq 2g(F)-2.
\]
\end{enumerate}
\end{theorem}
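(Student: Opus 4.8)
The plan is to reduce the adjunction inequality for $\hfkoc(Y,K;\fraks)$ to the classical adjunction inequalities for the Heegaard Floer homologies $\hfcirc(Y;\fraks)$ of the ambient three-manifold, exploiting the relationship between the knot Floer complex and the Floer complex of $Y$. The key observation is that the hypotheses $\#(K\cap F)\le 1$ and $\#(F,K)\le 0$ are precisely what one needs to control how the surface $F$ interacts with the basepoints $w,z$ used to define $\hfkoc$, so that a suitable positioning of $F$ makes it disjoint from the relevant basepoint(s) and hence reduces it to an embedded surface in $Y$ (or in $Y\setminus\nu(K)$, filled in appropriately) to which Ozsv\'ath--Szab\'o's adjunction theorem applies.

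\textbf{Step 1: Periodic domains and $\spinc$-structures.} I would first recall that $\hfkoc(Y,K;\fraks)$ is computed from a doubly-pointed Heegaard diagram $(\Sigma,\afat,\bfat,w,z)$ compatible with $(Y,K)$, and that the relevant $\spinc$-structure $\fraks$ on $Y$ is obtained from the $\spinc$-structures $\underline{\fraks}$ on $Y\setminus\nu(K)$ by filling in; the first Chern class $\langle c_1(\fraks),[F]\rangle$ is computed, as usual, via the evaluation on periodic domains representing $[F]$ (with a correction term $2n_w(\mathcal{P})$ or $2n_z(\mathcal{P})$, following the formulas in \cite{OsZa01,OsZa04}). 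The condition $g(F)>0$ together with $\#(K\cap F)\le 1$ lets me isotope $F$ so that it meets a tubular neighborhood of $K$ in at most one meridian disk, and push it off $z$ (resp.\ off both $w$ and $z$ in case (ii)).

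\textbf{Step 2: Invoke the ambient adjunction inequality.} In case (ii), where $K\cap F=\emptyset$, the surface $F$ sits inside $Y\setminus\nu(K)$, and I can arrange the Heegaard diagram so that $F$ is realized by a periodic domain avoiding both basepoints; non-vanishing of $\hfkoc(Y,K;\fraks)$ then forces non-vanishing of a summand of $\hfcirc(Y;\fraks)$ (via the basepoint-forgetting spectral sequence / the exact sequences relating $\hfkhat$ to $\hfhat$, or directly since the $z$-filtered complex has the $w$-complex as an associated object), whence the classical two-sided adjunction inequality $|\langle c_1(\fraks),[F]\rangle|\le 2g(F)-2$ of \cite{OsZa01}. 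In case (i), where $K$ meets $F$ once, I would instead work in the knot-adjunction setting of sutured Floer homology: capping $F$ with a Seifert-surface-like piece inside $\nu(K)$ changes the genus in a controlled way, and the one-sided inequality $-\langle c_1(\fraks),[F]\rangle \le 2g(F)-2$ emerges because the intersection $K\cap F$ breaks the symmetry $z\leftrightarrow w$ — only one of the two correction terms is available, so only one inequality survives. This is the analogue of how the genus-detection/fibredness results of knot Floer homology are one-sided in the Seifert genus.

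\textbf{The main obstacle} will be Step 1 in case (i): making precise the interaction of the non-trivially-intersecting surface $F$ with the two basepoints and extracting exactly the \emph{one-sided} bound. Unlike the disjoint case, one cannot simply quote the closed three-manifold adjunction inequality; one must either (a) pass to sutured Floer homology $\sfh(Y\setminus\nu(K))$ and use the sutured adjunction inequality of \cite[Theorem 2]{AJU2} together with the identification $\hfkhat(Y,K)\cong\sfh(Y(K))$, being careful that this identification and its $\spinc$-decomposition match up with the $\hfkoc$-version and the evaluation $\langle c_1(\fraks),[F]\rangle$; or (b) give a direct argument on the doubly-pointed diagram, choosing a periodic domain $\mathcal{P}$ with $[\mathcal{P}]=[F]$ and $n_w(\mathcal{P})=0$ but $n_z(\mathcal{P})\ne 0$ forced by $\#(K\cap F)=1$, and running the usual positivity-of-domains argument to bound $\langle c_1(\fraks),[F]\rangle = \langle c_1(\underline{\fraks}),[F]\rangle$ from below only. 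I expect route (a) to be the cleanest, with the bookkeeping of $\spinc$-structures and the $\#(F,K)\le 0$ hypothesis (which fixes the relative sign/direction of the inequality) being the delicate part; the positive-genus hypothesis $g(F)>0$ is used to guarantee $2g(F)-2\ge 0$ so that the bound is non-vacuous and to rule out sphere components where the argument would degenerate.
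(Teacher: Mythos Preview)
Your reduction in Step~2 for case~(ii) has a genuine gap. The claim that ``non-vanishing of $\hfkoc(Y,K;\fraks)$ forces non-vanishing of a summand of $\hfcirc(Y;\fraks)$'' is not correct: the basepoint-forgetting spectral sequence runs from knot Floer homology (the $E_1$-page) to $\hfhat(Y)$ (the $E_\infty$-page), and non-vanishing of $E_1$ never implies non-vanishing of $E_\infty$. So you cannot quote the classical adjunction inequality for $\hfcirc(Y;\fraks)$ this way. Similarly, route~(a) for case~(i) via sutured Floer homology would at best handle $\hfkhat$, not the $\hfkom$ and $\hfkoi$ versions covered by the statement.

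What the paper actually does is closer to your route~(b), but carried out uniformly for both cases and at the chain level. The key technical step you are missing is the construction of a Heegaard diagram that is \emph{simultaneously} $F$-adapted (in the sense of Ozsv\'ath--Szab\'o's Lemma giving a periodic domain $\mathcal{P}$ with $0/1$ multiplicities representing $[F]$) and $K$-adapted. The paper proves this can always be done under the hypotheses $\#(K\cap F)\le 1$, by building the handle decomposition of $Y$ starting from $\nu F$ (or $\nu F\cup\nu K$) and tracking where the $1$-handle corresponding to $\nu K$ sits relative to $\mathcal{P}$. Once such a diagram exists, non-vanishing of $\hfkoc(Y,K;\fraks)$ only needs to produce a single intersection point $\xfat$ with $\fraks_z(\xfat)=\fraks$; the Chern class formula $\langle c_1(\fraks_z(\xfat)),[\mathcal{P}]\rangle = 2-2g(F)+2\#\{x_i\in\mathrm{int}\,\mathcal{P}\}$ then gives the inequality directly, with no spectral sequence needed. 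In case~(ii) both basepoints end up outside $\mathcal{P}$, so one may reverse the orientation of $F$ and obtain the two-sided bound; in case~(i) the condition $\#(F,K)\le 0$ forces $w$ (not $z$) inside $\mathcal{P}$, so only the one-sided inequality survives --- your intuition about the broken $z\leftrightarrow w$ symmetry is exactly right, but you had the roles of $w$ and $z$ reversed.
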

The main difficulty here is that the surface $F$ and the knot $K$ 
might intersect: To elaborate a little on this problem, we would 
like to note that the adjunction inequalities are set up by 
constructing a special Heegaard diagram which is adapted to the 
surface $F$ (see~Proposition~\ref{mydiagram}). In such diagrams, 
we obtain a formula for computing the chern class of a 
$\spinc$-structure evaluated on $F$ in terms of information 
encoded in the Heegaard diagram (see~Lemma~\ref{chernclassformula}). 
However, to make this construction work in the presence of a 
knot, we have to simultaneously adapt the Heegaard diagram to both 
the knot and the surface. It turns out that there are basically 
two cases, namely the two given in the theorem. Multiple 
intersections have to be resolved by adding handles to the 
surface $F$. A detailed explanation of such a procedure is given 
in the proof of Proposition~\ref{thm:van}. To us it seemed peculiar 
that in case $(i)$ of the theorem the chern class term appears 
without absolute values. One might think that it should be possible 
to prove this result with absolute values. However, what we did 
seems to be the best that can be done. We are able to provide a 
counterexample to the estimate in $(i)$ with absolute values. We 
point the interested reader to Remark~\ref{remark01}. 

Additionally, we prove the following adjunction inequality for 
cobordism maps between knot Floer homologies. Here, $\Foc$ stands 
for $\Foo$, $\Fom$ or $\Foi$ (see~\cite{Saha04}).
%
%
\begin{theorem}\label{mymapadjunc} Let $Y$ be a closed, oriented 
$3$-manifold with knot $K'$ and let $K$ be a homologically trivial 
knot disjoint from $K'$. Denote by $W$ the knot cobordism induced 
by surgery along a knot $K'$. Suppose we are given a closed, 
oriented surface $F\subset W$ with positive genus, 
$\#(I\times K'\cap F)\leq1$ and $\#(F,I\times K')\leq 0$. For 
$\fraks\in\spinc(W)$ we have the following result:
\begin{enumerate}
 \item[(i)] If $I\times K'$ and $F$ intersect, then $[F]^2\geq0$ 
 together with
 \[-\bigl<c_1(\fraks),[F]\bigr>-[F]^2>2g(F)-2\]
 implies that $\Foc_{K,\fraks}=0$.
 \item[(ii)] If $I\times K$ and $F$ are disjoint, then $[F]^2\geq0$ 
 together with
 \[\bigl|\bigl<c_1(\fraks),[F]\bigr>\bigr|+[F]^2>2g(F)-2\]
 implies that $\Foc_{K,\fraks}=0$.
\end{enumerate}
\end{theorem}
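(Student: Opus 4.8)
The strategy is to reduce both statements to the closed--manifold estimate of Theorem~\ref{myadjuncineq}, by the same two moves that prove the analogous statement in ordinary Heegaard Floer homology: a blow-up argument bringing $[F]^2$ to zero, followed by a Chern-class computation in the Heegaard triple diagram that computes $\Foc_{K,\fraks}$.

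First I would reduce to the case $[F]^2=0$. Writing $n=[F]^2\geq0$, blow up $W$ at $n$ points chosen in the complement of $F\cup(I\times K')$, obtaining $W'=W\#\,n\,\overline{\mathbb{CP}}^2$ with exceptional spheres $E_1,\dots,E_n$; this leaves the boundary and the cylinder $I\times K'$ untouched, so $W'$ is again a knot cobordism of the required form with the same intersection data for $F$. Tubing $F$ to $E_1,\dots,E_n$ in turn (possible since these are pairwise disjoint) yields an embedded surface $F'\subset W'$ with $g(F')=g(F)$, with $[F']^2=[F]^2+\sum_iE_i^2=0$, and meeting $I\times K'$ as $F$ did. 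Choosing a $\spinc$-structure $\fraks_i$ on the $i$-th summand with $\langle c_1(\fraks_i),E_i\rangle=\pm1$ and putting $\fraks'=\fraks\#\fraks_1\#\cdots\#\fraks_n$, the blow-up formula for cobordism maps on $\hfkoc$ (which one proves exactly as for ordinary Heegaard Floer homology) identifies $\Foc_{K,\fraks}$ with $\Foc_{K,\fraks'}$, while $\langle c_1(\fraks'),[F']\rangle=\langle c_1(\fraks),[F]\rangle\pm[F]^2$ with a sign we may prescribe through the choices of the $\fraks_i$ and the tube orientations. In case (i) we arrange $\langle c_1(\fraks'),[F']\rangle=\langle c_1(\fraks),[F]\rangle-[F]^2$, so that $-\langle c_1(\fraks'),[F']\rangle=-\langle c_1(\fraks),[F]\rangle+[F]^2>2g(F')-2$ by the hypothesis and $[F]^2\geq0$; in case (ii) we arrange $|\langle c_1(\fraks'),[F']\rangle|=|\langle c_1(\fraks),[F]\rangle|+[F]^2>2g(F')-2$. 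In both cases the hypothesis passes to the $[F]^2=0$ version of the theorem for $(W',F',\fraks')$, so it is enough to treat surfaces of square zero.

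When $[F]^2=0$ I would run the argument of Theorem~\ref{myadjuncineq} one level up, on triangles. Pick a Heegaard triple $(\Sigma,\afat,\bfat,\gfat,w,z)$ computing $\Foc_{K,\fraks}$, so that $(\Sigma,\afat,\bfat,w,z)$ and $(\Sigma,\afat,\gfat,w,z)$ are doubly-pointed diagrams for the two knot pairs and $\bfat,\gfat$ are related by the surgery curve, and, as in Proposition~\ref{mydiagram}, arrange it to be adapted to $F$: after an isotopy, $F$ is built from a subsurface of $\Sigma$ by capping off with compressing discs of the $\afat$-, $\bfat$- and $\gfat$-curves and with cores of the $1$- and $2$-handles, and the conditions $\#(I\times K'\cap F)\leq1$, $\#(F,I\times K')\leq0$ dictate whether $F$ must cross exactly one of $w,z$ or neither of them --- this is precisely the dichotomy in the statement. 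Surplus intersections of $F$ with $I\times K'$ are removed, just as in the proof of Proposition~\ref{thm:van}, by tubing extra handles onto $F$; since $[F]^2=0$ this changes neither the square nor the estimate. For a Whitney triangle class $\psi\in\pi_2(\xfat,\yfat,\zfat)$ with $\mu(\psi)=0$ contributing to $\Foc_{K,\fraks}$, a Chern-class formula in the spirit of Lemma~\ref{chernclassformula} expresses $\langle c_1(\fraks_\psi),[F]\rangle$ through the Euler measure of the domain of $\psi$ and its local multiplicities along $F$; non-negativity of those multiplicities together with Gauss--Bonnet gives $-\langle c_1(\fraks_\psi),[F]\rangle\leq2g(F)-2$ in case (i) and $|\langle c_1(\fraks_\psi),[F]\rangle|\leq2g(F)-2$ in case (ii). Under the hypothesis of the theorem no index-zero triangle then represents $\fraks$, so $\Foc_{K,\fraks}=0$.

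The hard part is the construction of the adapted Heegaard triple in the presence of the knot, that is, keeping the triple subordinate to the surgery, adapted to $F$, and compatible with the two basepoints $w,z$ recording $K$, while controlling how $I\times K'$ meets $F$; proving that the Chern-class formula persists in this enriched setting, and that killing excess intersections by adding handles does not destroy the genus bound, is the technical core. The remaining steps are a routine transcription of the closed--manifold arguments to the level of the triangle count.
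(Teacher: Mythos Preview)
Your first step, the blow-up reduction to $[F]^2=0$, matches the paper's: both blow up at $n=[F]^2$ points away from the tracked cylinder, tube $F$ to the exceptional spheres, and invoke a blow-up formula for the knot-Floer cobordism maps.

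The second step, however, is genuinely different. You propose to build a Heegaard triple simultaneously adapted to the surgery, the surface $F$, and the basepoints, and then to prove and apply a triangle-level Chern-class formula analogous to Lemma~\ref{chernclassformula}. The paper does \emph{not} do this. Instead it follows the Lisca--Stipsicz template: after blowing up, it \emph{splits} the cobordism $\What=\What_1\cup_\partial\What_2$ by taking a regular neighbourhood of $Y\cup\gamma\cup\Fhat$, where $\gamma$ is an arc from $Y$ to $\Fhat$ chosen so that the cylinder over the tracked knot is cut into two honest knot-cobordism cylinders (this is where the dichotomy ``disjoint vs.\ one negative intersection'' enters). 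The intermediate boundary is $Y\#(\sone\times\Fhat)$, carrying a knot that meets $\Fhat$ once negatively (case~(i)) or not at all (case~(ii)). By the composition law the map $\Foc_{K,\fraks}$ factors through $\hfkoc$ of this intermediate pair, and Theorem~\ref{myadjuncineq} applied to the closed surface $\Fhat\subset Y\#(\sone\times\Fhat)$ kills that group under the stated hypothesis. So the paper reduces the cobordism inequality to the already-proved \emph{three}-manifold inequality, needing only the composition law, the blow-up formula, and conjugation invariance for $\Foc$.

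What each approach buys: the paper's factorisation avoids precisely the ``hard part'' you flag---there is no need for an $F$-adapted Heegaard \emph{triple} or a four-dimensional Chern-class/Euler-measure formula; everything is pushed down to Theorem~\ref{myadjuncineq}. Your route is conceptually reasonable, but the steps you label as the technical core (adapting a triple to a closed surface in the \emph{four}-manifold, and the triangle Chern-class inequality with the correct basepoint behaviour) are not standard and you have not supplied them; in particular, representing an arbitrary square-zero $F\subset W$ by a triply-periodic domain with the right capping data, and then extracting the one-sided inequality in case~(i) from positivity of triangle multiplicities, would require real work beyond what is sketched.
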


\subsection{Symmetries} In \S\ref{symmetries} we study symmetries 
of the knot Floer homologies. The groups $\hfkoc$ share a 
conjugation symmetry which is given in 
Proposition~\ref{conjuginvar}. The knot Floer homology 
$\hfkhat$ has some additional symmetries.
\begin{prop}\label{mysymmetry} Let $Y$ be a closed, oriented 
$3$-manifold with knot $K$ which is homologically essential. Then 
there is a canonical isomorphism
\[
  \pointswap\co
  \hfkhat(Y,K;\fraks)
  \lra
  \hfkhat(Y,-K;\fraks+PD[K])
\]
we will call {\bf point-swap isomorphism}.
\end{prop}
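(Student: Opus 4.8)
The plan is to construct the point-swap isomorphism at the chain level from a suitable doubly-pointed Heegaard diagram and then verify it is a chain map inducing an isomorphism on homology that is natural. The starting point is a doubly-pointed Heegaard diagram $(\Sigma,\afat,\bfat,w,z)$ for $(Y,K)$, where the two basepoints $w,z$ encode the knot $K$; reversing the orientation of $K$ corresponds exactly to interchanging the roles of $w$ and $z$, so $(\Sigma,\afat,\bfat,z,w)$ is a diagram for $(Y,-K)$. On the level of generators $\talpha\cap\tbeta$ nothing changes, so the underlying chain group is literally the same; the only thing that changes is which basepoint is used to filter (or, for $\hfkhat$, which basepoint count is forced to vanish) and how $\spinc$-structures are assigned. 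Thus $\pointswap$ will be the identity on the underlying $\Z$-module $\cfkhat$, and the content of the proposition is bookkeeping: (a) that the identity is a chain map between the two differentials, and (b) that it carries $\fraks$ to $\fraks+PD[K]$.

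For step (a), recall that in $\cfkhat$ the differential counts holomorphic disks $\phi$ with $n_w(\phi)=n_z(\phi)=0$ (for $\hfkoc$ one tracks the filtration by $n_z-n_w$ or similar), so the condition is symmetric under swapping $w\leftrightarrow z$; hence the same disks are counted on both sides and the identity map intertwines the differentials. For step (b), I would use the standard description of the $\spinc$-refinement: a generator $\xfat\in\talpha\cap\tbeta$ together with the basepoint $w$ determines $\fraks_w(\xfat)\in\spinc(Y)$ by the usual construction of Ozsv\'ath--Szab\'o (flowing a Morse-theoretic vector field, modified near $w$). Changing the basepoint from $w$ to $z$ changes the resulting $\spinc$-structure by the Poincar\'e dual of the homology class of the path from $w$ to $z$ in the complement — but that path, pushed off appropriately, is exactly (a longitude of) the knot $K$. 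This gives $\fraks_z(\xfat)=\fraks_w(\xfat)+PD[K]$, which is the shift claimed. The hypothesis that $K$ is homologically essential is what makes this shift nontrivial and hence the statement interesting; when $[K]=0$ the map is (up to the conjugation already recorded) the usual symmetry.

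The main obstacle — and the step I would spend the most care on — is \emph{naturality}: showing that $\pointswap$ is independent of the chosen Heegaard diagram, so that it descends to a canonical isomorphism of the invariants rather than just an isomorphism of the particular chain complexes. This requires checking compatibility with the three Heegaard moves (isotopies, handleslides, stabilizations) used to prove invariance of $\hfkhat$: for each move one compares the triangle/continuation map realizing invariance for the $(w,z)$-diagram with the one for the $(z,w)$-diagram, and one verifies that the relevant almost-complex structures and holomorphic-triangle counts are again symmetric under the basepoint swap, so the square relating $\pointswap$ before and after the move commutes. I expect this to be essentially formal but notationally heavy. A secondary subtlety is checking the grading behavior and, if one wants it, that $\pointswap$ interacts with the conjugation symmetry of Proposition~\ref{conjuginvar} in the expected way; but for the bare statement of Proposition~\ref{mysymmetry} only the existence and canonicity are needed, so I would keep the grading discussion to a remark.
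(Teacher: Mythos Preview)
Your proposal is correct and follows essentially the same route as the paper: set up the point-swap as the identity on generators of a doubly-pointed Heegaard diagram, observe that the $\hfkhat$-differential counts discs with $n_w=n_z=0$ and is therefore unchanged, and compute the $\spinc$-shift via $\fraks_z(\xfat)-\fraks_w(\xfat)=\pm PD[K]$ (the paper cites \cite[Lemma~2.19]{OsZa01} for this). The paper does not carry out the naturality check you outline---it simply declares the map canonical---so your extra paragraph on compatibility with Heegaard moves goes beyond what the paper actually proves; just be careful to match the paper's sign convention (it uses $\fraks=\fraks_z(\xfat)$ on the source, giving $\frakt_\fraks=\fraks_w(\xfat)=\fraks+PD[K]$).
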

Using the homology class of $[K]$ we define a map $\mA$ on the 
$\spinc$-structures of $Y$ given by $\mA(\fraks)=\fraks+PD[K]$ for $\fraks\in\spinc(Y)$.
Using the conjugation map and the map $\mA$ we define a new conjugation map we 
call {\bf knot conjugation}
\[
 \mathcal{N}
 \co
 \spinc(Y)
 \lra\spinc(Y)
\]
given by $\mN=\mJ\circ\mA$. By applying point-swap symmetry and conjugation symmetry we obtain the following result.
\begin{cor}\label{mysymmetry2} Let $Y$ be a closed, oriented $3$-manifold with knot $K\subset Y$. Then for all $\fraks\in\spinc(Y)$ there is an isomorphism
\[
  \mN_{(Y,K);\fraks}\co
  \hfkhat(Y,K;\fraks)
  \overset{\cong}{\lra}
  \hfkhat(Y,K;\mN(\fraks)).
\]
\end{cor}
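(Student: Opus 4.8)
The plan is to realize $\mN_{(Y,K);\fraks}$ as the composition of the point-swap isomorphism of Proposition~\ref{mysymmetry} with the conjugation symmetry of Proposition~\ref{conjuginvar}, the latter being understood in the form that identifies $\hfkhat(Y,K;\fraks)$ with $\hfkhat(Y,-K;\mJ(\fraks))$. First I would record, from the definitions $\mN=\mJ\circ\mA$ and $\mA(\fraks)=\fraks+PD[K]$, that the target of the asserted isomorphism is $\hfkhat\bigl(Y,K;\mJ(\fraks+PD[K])\bigr)$, and that $\mJ$ is an involution on $\spinc(Y)$ with $\mJ(\fraks+a)=\mJ(\fraks)-a$ for $a\in H^2(Y;\Z)$.

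Assume first that $K$ is homologically essential, so that Proposition~\ref{mysymmetry} applies and supplies the canonical isomorphism
\[
  \pointswap\co\hfkhat(Y,K;\fraks)\;\overset{\cong}{\lra}\;\hfkhat\bigl(Y,-K;\fraks+PD[K]\bigr).
\]
Next I would apply the conjugation symmetry of Proposition~\ref{conjuginvar} to the pair $(Y,-K)$ at the $\spinc$-structure $\mA(\fraks)=\fraks+PD[K]$; using $-(-K)=K$ and $\mJ\circ\mJ=\id$ this gives a canonical isomorphism
\[
  \hfkhat\bigl(Y,-K;\fraks+PD[K]\bigr)\;\overset{\cong}{\lra}\;\hfkhat\bigl(Y,K;\mJ(\fraks+PD[K])\bigr)=\hfkhat(Y,K;\mN(\fraks)).
\]
Taking $\mN_{(Y,K);\fraks}$ to be the composite of these two maps yields the statement, and its naturality is inherited from the naturality of $\pointswap$ and of the conjugation isomorphism.

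For $K$ homologically trivial the same argument works verbatim once Proposition~\ref{mysymmetry} is replaced by its $PD[K]=0$ specialization, namely the standard orientation-reversal isomorphism $\hfkhat(Y,K;\fraks)\cong\hfkhat(Y,-K;\fraks)$; here $\mA=\id$ and $\mN=\mJ$, and the composite with the conjugation symmetry reproduces the usual conjugation invariance $\hfkhat(Y,K;\fraks)\cong\hfkhat(Y,K;\mJ(\fraks))$.

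I do not anticipate a substantive obstacle: granted Propositions~\ref{mysymmetry} and~\ref{conjuginvar}, the corollary is a formal two-step composition. The one point that genuinely requires care is the $\spinc$-bookkeeping — one must check that conjugating $(Y,-K)$ at $\fraks+PD[K]$ lands exactly in $\hfkhat(Y,K;\mJ(\fraks+PD[K]))$, so that the composite equals $\mN=\mJ\circ\mA$ and not $\mA\circ\mJ$; these differ in general because $\mJ(\fraks+PD[K])=\mJ(\fraks)-PD[K]\neq\mJ(\fraks)+PD[K]$. Getting right the side of the conjugation on which $PD[K]$ is inserted is therefore the only delicate step.
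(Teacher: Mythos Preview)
Your proposal is correct and matches the paper's proof exactly: the paper defines $\mN_{(Y,K);\fraks}=\mJ_{Y;\fraks+PD[K]}\circ\pointswap_\fraks$ (equivalently $\pointswap_\fraks\circ\mJ_{Y;\fraks}$), which is precisely the two-step composition you describe. Your extra care with the $\spinc$-bookkeeping and the separate treatment of the homologically trivial case are welcome elaborations, but the underlying argument is the same.
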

This symmetry turns out to be interesting, especially in light of $(i)$ of Theorem~\ref{myadjuncineq}. The knot conjugation symmetry shows that in case 
$[K]\not=0$ we have a {\it shifted} (or broken) symmetry in $PD[K]$-direction.
The missing absolute values in the adjunction inequalities, thus, can be thought of
as a manifestation of this {\it shifted} (or broken) symmetry.
The knot conjugation symmetry carries over to maps induced by cobordisms. 
\begin{prop}\label{knotconjug} Let $Y$ be a closed, oriented $3$-manifold 
and $K\subset Y$ a knot. Denote by $W$ a cobordism between $(Y,K)$ and 
$(Y',K')$. Then, there is a map
\[
 \mN\co\spinc(W)\lra\spinc(W)
\]
with the property that $\mN\circ\mN={\rm id}$ such that
\[
 \Foo_{W,\fraks}
 =
 \mN_{(Y',K')}
 \circ
 \Foo_{W,\mN(\fraks)}
 \circ
 \mN_{(Y,K)}
\]
for every $\fraks\in\spinc(W)$. We call this {\bf knot conjugation symmetry}.
Furthermore, the maps $\Foc_{W,\fraks}$ fulfill a conjugation 
symmetry which says that $\Foc_{W,\fraks}$ 
equals $\mJ_{Y'}\circ \Foc_{W,\mJ(\fraks)}\circ\mJ_{Y}$.
\end{prop}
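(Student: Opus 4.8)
The plan is to establish, for the cobordism maps constructed in \cite{Saha04}, analogues of the two symmetries that we already have on the level of the homology groups — the point-swap isomorphism of Proposition~\ref{mysymmetry} and the conjugation symmetry of Proposition~\ref{conjuginvar} — and then to compose them in exactly the way in which $\mN=\mJ\circ\mA$ is assembled in Corollary~\ref{mysymmetry2}. Recall that $\Foc_{W,\fraks}$ is obtained by decomposing $W$ into elementary knot cobordisms and composing the associated maps, each of which is defined by counting holomorphic triangles in a Heegaard triple-diagram $(\Sigma,\afat,\bfat,\gfat,w,z)$ subordinate to the knot cobordism surface $\widehat{K}\subset W$. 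It therefore suffices to establish both symmetries for one such elementary piece and to check that they behave well under composition of cobordisms; the latter is formal once the maps induced on the incoming and outgoing ends have been identified.

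First I would treat the plain conjugation symmetry $\Foc_{W,\fraks}=\mJ_{Y'}\circ\Foc_{W,\mJ(\fraks)}\circ\mJ_{Y}$. This is the knot-cobordism counterpart of the conjugation invariance of the closed four-manifold invariants of \ozs{} and \sza{}: starting from a subordinate triple-diagram that computes $\Foc_{W,\fraks}$, one reverses the orientation of $\Sigma$ and suitably relabels the three families of attaching circles. The result is again a subordinate triple-diagram for $W$, but now computing $\Foc_{W,\mJ(\fraks)}$, and the identification of Maslov indices and triangle counts gives the asserted identity at the chain level; the vertical maps are, by construction, the chain-level conjugation maps realising the isomorphisms of Proposition~\ref{conjuginvar}. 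The only point requiring attention beyond the closed case is that this operation fixes the basepoints $w$ and $z$, so that the knot cobordism surface — and hence the affine relation between $\fraks$ and $\mJ(\fraks)$ in $\spinc(W)$ — is left unchanged; since no basepoint swap is involved, the argument runs in parallel for all three flavours $\Foo$, $\Fom$ and $\Foi$.

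Next I would establish the cobordism-level point-swap symmetry, which is the genuinely new ingredient and where the main difficulty lies. Interchanging the roles of $w$ and $z$ throughout a subordinate triple-diagram for $(W,\widehat{K})$ produces a subordinate diagram for $(W,-\widehat{K})$ — a knot cobordism between $(Y,-K)$ and $(Y',-K')$ — and it shifts the ambient $\spinc$-structure on $W$; if $\mA_W$ denotes this shift, then the basepoint swap identifies $\Foo_{W,\fraks}$ with $\Foo_{W,\mA_W(\fraks)}$ after pre- and post-composing with the point-swap isomorphisms $\pointswap$ of Proposition~\ref{mysymmetry} on the two ends. Two things must be checked here: that the chain-level basepoint-swap maps actually commute with the holomorphic triangle counts (naturality), and, the more delicate point, that $\mA_W(\fraks)=\fraks+PD[\widehat{K}]$ — equivalently, that swapping $w$ and $z$ alters the first Chern class of the associated $\spinc$-structure on $W$ by $2\,PD[\widehat{K}]$ — and that under the restriction maps to $Y$ and $Y'$ this is compatible with the shift $\mA(\fraks)=\fraks+PD[K]$ of Proposition~\ref{mysymmetry}. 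Compatibility of the basepoint-swap maps with composition is again formal. Because $\pointswap$ is only available for $\hfkhat$, this step, and hence the knot conjugation symmetry, is restricted to $\Foo$.

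Finally I would combine the two. Put $\mA_W(\fraks)=\fraks+PD[\widehat{K}]$ and $\mN_W=\mJ_W\circ\mA_W$, so that $\mN_W(\fraks)=\mJ_W(\fraks)-PD[\widehat{K}]$; since $\mJ_W$ is affine with linear part $-\id$, a one-line computation gives $\mN_W\circ\mN_W=\id$. Composing the cobordism-level point-swap relation with the conjugation relation of the second step, in the same order in which $\mN_{(Y,K);\fraks}=\mJ\circ\pointswap$ is formed in Corollary~\ref{mysymmetry2}, turns the two separate identities into
\[
 \Foo_{W,\fraks}=\mN_{(Y',K')}\circ\Foo_{W,\mN_W(\fraks)}\circ\mN_{(Y,K)},
\]
which is the claimed knot conjugation symmetry. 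Thus everything comes down to the two elementary-piece statements above, the harder of which is the $\spinc$-bookkeeping for the basepoint swap.
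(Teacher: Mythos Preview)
Your outline is essentially the paper's own strategy: reduce to an elementary $2$-handle piece, establish the plain conjugation symmetry by reversing the orientation of $\Sigma$ and relabelling attaching circles, establish a point-swap relation for the triangle maps, and compose the two to obtain $\mN$ on $\spinc(W)$. The paper also verifies the chain-level identity first (showing that the triangle counts in $(\Sigma,\afat,\bfat,\gfat,w,z)$ and in $(-\Sigma,\gfat,\bfat,\afat,z,w)$ agree via the identification $\mM^{\mJ_s}_{(\afat,\bfat,\gfat)}\cong\mM^{-\mJ_s}_{(\gfat,\bfat,\afat)}$) and only afterwards refines to $\spinc$-structures.

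The one substantive difference is that you commit to proving the explicit formula $\mA_W(\fraks)=\fraks+PD[\widehat{K}]$ and list it as ``the more delicate point'' to be checked. The paper deliberately avoids this computation (see the Remark immediately following the proof): it shows only that $\fraks_z(\phi)-\fraks_w(\phi)$ is a \emph{constant} class, by observing that both $\spinc$-structures depend on the Whitney triangle $\phi$ only away from neighbourhoods of the surfaces $F_0^z$ and $F_0^w$, and that for triangles with $n_z=n_w=0$ the difference is therefore independent of $\phi$. This is enough: if $\eta(\fraks)=\fraks+c$ for any fixed $c$, then $\mN=\mJ\circ\eta$ satisfies $\mN\circ\mN=\id$ automatically (since $\mJ$ has linear part $-\id$), and the compatibility with the boundary knot-conjugation maps is already forced by the unrefined chain-level identity, not by a cohomological computation. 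So your proof plan is correct, but you are making it harder than necessary; the explicit value of the shift is neither needed for the involution property nor for boundary compatibility, and you should not leave it as an unverified obligation.
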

The map $\mN$ on $\spinc(W)$ will be not be specified, entirely. In the 
proof of this proposition, we show that $\mN$ is a combination of conjugation 
and a shift with a constant class. However, the discussion in the proof gives 
all information needed to compute this shift.
\subsection{Calculations I} An immediate implication of knot conjugation symmetry
is the following result we prove in \S\ref{app01}.
\begin{theorem}\label{mysym:kfh} Let $Y$ be a closed, oriented $3$-manifold 
with a knot $K\subset Y$ whose associated homology class $[K]$ cannot 
be divided by two. Then the rank of the knot Floer homology of the pair 
$(Y,K)$ is even.
\end{theorem}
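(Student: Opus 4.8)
The plan is to deduce this directly from the knot conjugation symmetry of Corollary~\ref{mysymmetry2}. Since $\hfkhat(Y,K)=\bigoplus_{\fraks\in\spinc(Y)}\hfkhat(Y,K;\fraks)$, it suffices to show that the $\spinc$-decomposition pairs up summands of equal rank with no fixed points; that is, that the involution $\mN=\mJ\circ\mA$ on $\spinc(Y)$ has no fixed point under the hypothesis that $[K]$ is not divisible by $2$. Indeed, if $\mN$ acts freely on the (finite, up to $\hfkhat$-support) set of $\spinc$-structures, then Corollary~\ref{mysymmetry2} gives, for each orbit $\{\fraks,\mN(\fraks)\}$, an isomorphism $\hfkhat(Y,K;\fraks)\cong\hfkhat(Y,K;\mN(\fraks))$, so each orbit contributes an even number to the total rank, and summing over orbits yields that the total rank is even.

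So the crux is the following elementary $\spinc$-geometry claim: if $\mN(\fraks)=\fraks$ for some $\fraks$, then $[K]$ is divisible by $2$ in $H_1(Y;\Z)$. Recall that for a fixed $\spinc$-structure $\fraks_0$ the set $\spinc(Y)$ is an affine space over $H^2(Y;\Z)\cong H_1(Y;\Z)$ (Poincar\'e duality), and that conjugation acts by $\mJ(\fraks)=\fraks + (2\fraks_0^{*}-\,\cdot\,)$ in the sense that $\mJ(\fraks)-\fraks$ runs over all of $2\,\cdot\,(\text{something})$ only when $\fraks$ itself varies; more precisely, writing $\fraks=\fraks_0+a$ with $a\in H^2(Y;\Z)$ one has $\mJ(\fraks)=\fraks_0-a + c_1(\fraks_0)$ up to the standard normalization, so $\mJ(\fraks)-\fraks = c_1(\fraks_0)-2a$. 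Combining with $\mA(\fraks)=\fraks+\mathrm{PD}[K]$, the equation $\mN(\fraks)=\mJ(\mA(\fraks))=\fraks$ becomes $c_1(\fraks_0)-2a-\mathrm{PD}[K]=0$ in $H^2(Y;\Z)$, i.e.\ $\mathrm{PD}[K]=c_1(\fraks_0)-2a$. Since $c_1$ of any $\spinc$-structure is a characteristic element and in particular $c_1(\fraks_0)\equiv 2\,w\ (\mathrm{mod}\ 2\cdot\mathrm{Tors})$ fails in general, the right statement to use is simply that $c_1(\fraks_0)$ reduces mod $2$ to the (fixed) second Stiefel--Whitney-type class $w$, which is independent of $\fraks_0$; hence $\mathrm{PD}[K]\equiv c_1(\fraks_0)\equiv c_1(\fraks)\pmod{2}$ for the putative fixed point, and more to the point $\mathrm{PD}[K]+2a = c_1(\fraks_0)$ exhibits nothing on its own. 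I would instead argue mod $2$: reducing $\mathrm{PD}[K]=c_1(\fraks_0)-2a$ modulo $2$ gives $\mathrm{PD}[K]\equiv c_1(\fraks_0)\pmod 2$, and applying this to two different fixed points (or using that the set of fixed points of $\mN$, if nonempty, is a coset of $\ker(\,\cdot\,\mapsto 2\,\cdot\,)=H_1(Y;\Z)[2]$ translated appropriately) one sees that the existence of a fixed point forces $\mathrm{PD}[K]\in 2H^2(Y;\Z)+ \mathrm{Tors}_2$, which after dualizing and absorbing $2$-torsion contradicts $[K]\notin 2H_1(Y;\Z)$.

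The honest way to run the last step, and the one I would actually write, is: the involution $\mN$ on the affine space $\spinc(Y)$ over $H^2(Y;\Z)$ has the form $\fraks\mapsto \iota(\fraks)+\mathrm{PD}[K]$, where $\iota$ is the affine involution underlying conjugation, whose ``linear part'' is $a\mapsto -a$. A fixed point exists if and only if $\mathrm{PD}[K]$ lies in the image of $(1-(-1))=2$ on $H^2(Y;\Z)$, i.e.\ iff $\mathrm{PD}[K]\in 2H^2(Y;\Z)$, equivalently iff $[K]\in 2H_1(Y;\Z)$ (by Poincar\'e duality, which is an isomorphism of abelian groups). Thus if $[K]$ is not divisible by $2$, $\mN$ has no fixed $\spinc$-structure, the action is free, and the rank computation above applies.

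The main obstacle is precisely pinning down the affine structure of conjugation well enough to make the ``fixed point iff divisible by $2$'' equivalence clean, including correctly handling $2$-torsion in $H_1(Y;\Z)$; one must be careful that ``not divisible by $2$'' is exactly the condition that kills fixed points, and not merely a sufficient condition modulo torsion. Once that linear-algebra fact is isolated, the rest is the bookkeeping: finiteness of the support of $\hfkhat(Y,K)$ over $\spinc(Y)$ (so that ``sum over orbits'' makes sense), and the observation that a free $\Z/2$-action on a finite set has even cardinality, hence each pair of $\mN$-related $\spinc$-summands contributes evenly to $\mathrm{rk}\,\hfkhat(Y,K)$.
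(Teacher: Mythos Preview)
Your strategy is exactly the paper's: show that knot conjugation $\mN$ acts on $\spinc(Y)$ without fixed points when $[K]\notin 2H_1(Y;\Z)$, and then pair the $\spinc$-summands via Corollary~\ref{mysymmetry2}. The finiteness of the $\spinc$-support you flag is immediate, since the chain complex is finitely generated.

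Cut your second paragraph; it meanders and you abandon it yourself. Your third paragraph is the right argument, but it has one genuine gap. The assertion ``$\mN$ has a fixed point iff $\mathrm{PD}[K]\in 2H^2(Y;\Z)$'' does \emph{not} follow merely from the linear part of $\mJ$ being $a\mapsto -a$: an affine involution with linear part $-1$ need not have a fixed point, so your map has a fixed point iff $\mathrm{PD}[K]$ lies in the set $\{\fraks-\mJ(\fraks):\fraks\in\spinc(Y)\}=\{c_1(\fraks):\fraks\in\spinc(Y)\}$, and there is no a~priori reason for this to equal $2H^2(Y;\Z)$. What makes them coincide is the (standard but unstated) fact that an oriented $3$-manifold has $w_2(Y)=0$; hence every $c_1(\fraks)$ reduces to $0$ mod $2$, and conversely every element of $2H^2(Y;\Z)$ arises as a Chern class. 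The paper finesses this by working in Turaev's vector-field model (recalled at the start of \S\ref{app01}), where a $\spinc$-structure is literally the homology class of a link and conjugation is negation; in that model the fixed-point equation becomes $\mathrm{PD}[K]=2\fraks$ on the nose, and divisibility by $2$ is immediate.

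Once you insert this one sentence, your $2$-torsion worry dissolves: ``$[K]$ divisible by $2$'' means precisely $[K]\in 2H_1(Y;\Z)$, which is equivalent under Poincar\'e duality to $\mathrm{PD}[K]\in 2H^2(Y;\Z)$, and ``image of multiplication by $2$'' is the correct object regardless of torsion. (Minor point: $\mN=\mJ\circ\mA$ gives $\mN(\fraks)=\mJ(\fraks)-\mathrm{PD}[K]$, not $+\mathrm{PD}[K]$; the sign is irrelevant to the fixed-point analysis.)
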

A combination of knot conjugation symmetry and the adjunction inequalities
gives the following result.
\begin{prop}\label{thm:van} Let $K,L\subset\sthree$ be arbitrary knots 
and $\Sigma_S$ a Seifert surface of $K$ with minimal genus $sg(K)$. Then 
the group
\[
 \hfkhat(\sthree_0(K),L;\fraks)
\]
\begin{itemize}
 \item[(a)] vanishes for $\spinc$-structures $\fraks$ outside of
\[
 \Bigl[-sg(K)-\lfloor\frac{\#(L\cap\Sigma_S)}{2}\rfloor+1,\,
sg(K)+\lfloor\frac{\#(L\cap\Sigma_S)}{2}\rfloor-1-lk(K,L)\Bigr]
\] 
if $\#(L\cap\Sigma_S)$ is non-zero, odd and $lk(K,L)\not=\#(L\cap\Sigma_S)$, or
if the number $\#(L\cap\Sigma_S)$ is even (or zero) and $lk(K,L)\geq0$,

\item[(b)] vanishes for $\spinc$-structures $\fraks$ outside of
\[
\Bigl[
-sg(K)-\bigl\lfloor\frac{\#(L\cap\Sigma_S)}{2}\bigr\rfloor+1-lk(K,L),\,
sg(K)+\lfloor\frac{\#(L\cap\Sigma_S)}{2}\rfloor-1\Bigr]
\]
if $\#(L\cap\Sigma_S)$ is non-zero, odd and $lk(K,L)=\#(L,\Sigma_S)$, 
or if the number $\#(L\cap\Sigma_S)$ is even (or zero) and $lk(K,L)\leq 0$.
\end{itemize}
\end{prop}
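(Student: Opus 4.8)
The plan is to combine the adjunction inequality of Theorem~\ref{myadjuncineq} with the knot conjugation symmetry (Corollary~\ref{mysymmetry2}), applied to a cleverly chosen surface inside $\sthree_0(K)$. The natural surface to use is $\widehat{F}$, the closed surface obtained from the minimal-genus Seifert surface $\Sigma_S$ of $K$ by capping it off with the meridian disk of the surgery solid torus; this has genus $g(\widehat F)=sg(K)$ (assuming $sg(K)>0$; the genus-zero case, i.e.\ $K$ unknotted, has $\sthree_0(K)=\stwo\times\sone$ and must be handled separately, but there the statement reduces to a direct computation). The class $[\widehat F]$ generates $H_2(\sthree_0(K);\Z)$, and evaluation of $c_1(\fraks)$ against it is exactly the standard parametrization of $\spinc$-structures on $\sthree_0(K)$ by (roughly) $\Z+\tfrac12$ or $\Z$; I would fix the convention so that $\langle c_1(\fraks),[\widehat F]\rangle$ records twice the ``Alexander grading'' coordinate.

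First I would arrange the geometric intersection: a priori $L$ meets $\widehat F$ in $\#(L\cap\Sigma_S)+(\text{something controlled by }lk(K,L))$ points, because $L$ also winds around the surgery torus and hence punctures the capping disk. The key bookkeeping step is that the algebraic intersection number $\#(\widehat F,L)$ in $\sthree_0(K)$ equals $lk(K,L)-lk(K,L)=0$ once one accounts for the framing — more precisely, one pushes $L$ off so that it meets the capping disk in exactly the right number of points with signs, and the constraint $\#(F,K)\leq 0$ of Theorem~\ref{myadjuncineq} (here with $K$ replaced by $I$-- no, here $L$ plays the role of the knot) forces a choice of orientation. To get the geometric intersection number down to $\leq 1$ as required by the theorem, I would resolve all but one (or zero) of the intersection points by tubing: each pair of oppositely-signed intersection points of $L$ with $\widehat F$ can be removed by attaching a tube to $\widehat F$ along an arc of $L$, at the cost of raising the genus by one. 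This is the ``adding handles'' procedure the introduction refers to; the precise count of how many tubes are needed, and hence how much the genus grows, is what produces the $\lfloor \#(L\cap\Sigma_S)/2\rfloor$ term and the two cases (a)/(b) according to the parity of $\#(L\cap\Sigma_S)$ and the sign of $lk(K,L)$.

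With a surface $F$ of genus $g(F)=sg(K)+\lfloor\#(L\cap\Sigma_S)/2\rfloor-c$ (where $c\in\{0,1\}$ depends on the leftover intersection point) satisfying the hypotheses of Theorem~\ref{myadjuncineq}, non-vanishing of $\hfkhat(\sthree_0(K),L;\fraks)$ gives either $-\langle c_1(\fraks),[F]\rangle\leq 2g(F)-2$ (case (i), when there is one leftover intersection point) or $|\langle c_1(\fraks),[F]\rangle|\leq 2g(F)-2$ (case (ii), when $L$ and $F$ are disjoint). Translating $[F]=[\widehat F]$ in $H_2$ and rewriting $\langle c_1(\fraks),[\widehat F]\rangle$ in terms of the $\spinc$-parameter yields an upper bound on that parameter. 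Then I would invoke knot conjugation symmetry: $\mathcal N$ acts on $\spinc(\sthree_0(K))$ as conjugation composed with a shift by $PD[L]$, and $\langle c_1(\mathcal J\fraks),[\widehat F]\rangle=-\langle c_1(\fraks),[\widehat F]\rangle$ while the shift by $PD[L]$ changes the evaluation by $2\,\langle PD[L],[\widehat F]\rangle = 2\,lk(K,L)$ (the linking number being exactly the intersection of $L$ with the capped Seifert surface). Since $\hfkhat(\sthree_0(K),L;\fraks)\cong\hfkhat(\sthree_0(K),L;\mathcal N(\fraks))$, the one-sided bound from the adjunction inequality in the $\mathcal N$-image $\fraks$ becomes the other-sided bound for $\fraks$, shifted by $lk(K,L)$; intersecting the two half-line constraints produces precisely the stated interval, with the asymmetric placement of the $lk(K,L)$ term distinguishing (a) from (b) according to its sign (and, in the odd case, according to whether $lk(K,L)=\#(L\cap\Sigma_S)$, which is exactly when the leftover-point configuration flips).

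The main obstacle I expect is the careful intersection-theoretic bookkeeping in the second paragraph: getting the exact geometric intersection number of $L$ with the capped-off surface, correctly counting how tubing to reduce it to $\le 1$ affects the genus, and tracking how the orientation hypotheses $\#(F,L)\le 0$ and $\#(K\cap F)\le 1$ of Theorem~\ref{myadjuncineq} interact with the sign of $lk(K,L)$ — this is what forces the split into cases (a) and (b) and dictates on which end of the interval the $lk(K,L)$ correction lands. Everything else (genus of the capped surface, identification of $H_2$, behavior of $c_1$ under conjugation and under a $PD$-shift, and the symmetry isomorphism itself) is standard once that combinatorial picture is pinned down. For the degenerate case $sg(K)=0$ I would simply note $\sthree_0(K)\cong\stwo\times\sone$ and compute $\hfkhat(\stwo\times\sone,L;\fraks)$ directly, checking that it is consistent with the interval degenerating appropriately (or is covered by the conventions so that the statement is vacuous/correct there).
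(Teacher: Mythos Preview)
Your overall strategy matches the paper's: tube the capped Seifert surface $\widehat F$ along arcs of $L$ to reduce the geometric intersection to at most one point, apply Theorem~\ref{myadjuncineq} to get a one-sided bound, and then use the symmetry to obtain the other side shifted by $lk(K,L)$. The case split (odd with a negative intersection present, odd with all intersections positive, even) and the resulting genus $sg(K)+\lfloor\#(L\cap\Sigma_S)/2\rfloor$ are exactly what the paper does.

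Two points of bookkeeping in your middle paragraph are off and would cost you time if you followed them. First, $L$ does \emph{not} puncture the capping disk: $L$ is disjoint from $K$ in $\sthree$, hence disjoint from the surgery solid torus, so $L\cap\widehat F=L\cap\Sigma_S$ on the nose. Second, the algebraic intersection $\#(\widehat F,L)$ is $lk(K,L)$, not zero; you use this correctly later when computing the $PD[L]$-shift, so the earlier ``$lk(K,L)-lk(K,L)=0$'' line is just a slip. Once these are fixed the tubing count is straightforward: with $\#(L\cap\Sigma_S)=2k+1$ odd you pair off $2k$ points into $k$ tubes and are left with a single intersection whose sign you control by choosing which point to leave (or by reversing the orientation of $\Sigma_S$), and with $\#(L\cap\Sigma_S)=2k$ even you tube everything away and land in case~(ii) of Theorem~\ref{myadjuncineq}. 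There is no extra ``$-c$'' in the genus.

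One small methodological difference: the paper obtains the second bound by applying Theorem~\ref{myadjuncineq} a second time to the pair $(\sthree_0(K),-L)$ with the surface $-\Sigma$, and then invokes point-swap (Proposition~\ref{mysymmetry}) to translate that into a statement about $(\sthree_0(K),L)$ with a $PD[L]$-shift. Your plan to apply the adjunction once and then hit it with knot conjugation $\mathcal N=\mathcal J\circ\mathcal A$ directly is equivalent (since $\mathcal N$ factors through conjugation, which is exactly what relates $(Y,L)$ to $(Y,-L)$), and arguably a bit cleaner.
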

We gave this result for knots $K$ and $L$ in the $3$-sphere. However, there is an
immediate analogue for knots $K$ and $L$ in an arbitrary closed, oriented $3$-manifold.
In that case, $K$ has to be nullhomologous, but $L$ can be arbitrary. As a special
case of this theorem, in combination with knot conjugation symmetry, we almost recover
\cite[Theorem~3.2]{eftek} on longitude Floer homology.
\begin{cor}[cf.~Theorem~1.1.~of \cite{eftek}]\label{myres:lfh} For a 
knot $K\subset \sthree$ denote by $sg(K)$ its Seifert genus. 
The longitude Floer homology $\widehat{\mbox{\rm HFL}}(K,\fraks)$ vanishes for 
$\fraks> sg(K)$ in $1/2+\Z$ and for $\fraks<-sg(K)$ in $1/2+\Z$. Furthermore, 
we have that
\[
 \lfh(K,\fraks)\cong\lfh(K,-\fraks).
\]
\end{cor}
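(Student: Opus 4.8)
The plan is to derive Corollary~\ref{myres:lfh} from Proposition~\ref{thm:van} by specializing to the case where $L$ is a longitude of $K$ sitting on the boundary of a minimal genus Seifert surface $\Sigma_S$. First I would recall the identification, due to Eftekhary, between the longitude Floer homology $\lfh(K,\fraks)$ and (a version of) the knot Floer homology $\hfkhat(\sthree_0(K),\lambda;\cdot)$, where $\lambda\subset\sthree_0(K)$ is the knot obtained from the Seifert longitude of $K$ after performing $0$-surgery. Since $\lambda$ can be isotoped onto $\partial\Sigma_S$, we have $\#(\lambda\cap\Sigma_S)=0$ and $lk(K,\lambda)=0$ in the appropriate sense; this puts us simultaneously in case $(a)$ (even/zero intersection number, $lk\geq0$) and case $(b)$ (even/zero intersection number, $lk\leq0$) of Proposition~\ref{thm:van}.

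With these numerical inputs, case $(a)$ of Proposition~\ref{thm:van} forces $\hfkhat(\sthree_0(K),\lambda;\fraks)$ to vanish outside $[-sg(K)+1,\,sg(K)-1]$, and the same from case $(b)$. After translating back through Eftekhary's identification, which shifts the $\spinc$-grading by $1/2$ (the longitude is homologically essential in $\sthree_0(K)$, so its $\spinc$-structures are indexed by $1/2+\Z$ rather than $\Z$), this becomes the statement that $\lfh(K,\fraks)$ vanishes for $\fraks>sg(K)$ and for $\fraks<-sg(K)$ in $1/2+\Z$. The word ``almost'' in the text presumably refers to a possible off-by-one discrepancy at the extremal $\spinc$-structures $\fraks=\pm sg(K)$: Proposition~\ref{thm:van} only controls the interval $[-sg(K)+1,sg(K)-1]$ on the nose, so one recovers vanishing strictly outside $[-sg(K),sg(K)]$ only after accounting for the grading shift, matching Eftekhary's bound up to this boundary subtlety.

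For the symmetry statement $\lfh(K,\fraks)\cong\lfh(K,-\fraks)$, I would invoke knot conjugation symmetry (Corollary~\ref{mysymmetry2}) applied to the pair $(\sthree_0(K),\lambda)$. The key point is that $[\lambda]$ generates $H_1(\sthree_0(K))\cong\Z$, so $PD[\lambda]$ is the generator of $H^2$, and the knot conjugation map $\mN=\mJ\circ\mA$ sends a $\spinc$-structure with evaluation $2k$ on the capped-off Seifert surface to one with evaluation $-2k+2\langle PD[\lambda],\cdot\rangle$; after the $1/2$-shift bookkeeping, this is exactly the reflection $\fraks\mapsto-\fraks$ on $1/2+\Z$. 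Corollary~\ref{mysymmetry2} then gives the desired isomorphism. The main obstacle I anticipate is purely bookkeeping: correctly pinning down the grading conventions relating $\spinc$-structures on $\sthree_0(K)$, the Alexander-type grading on $\lfh$, and the normalization in Eftekhary's isomorphism, so that the interval $[-sg(K)+1,sg(K)-1]$ and the map $\mN$ translate into the asserted bounds and the reflection $\fraks\mapsto-\fraks$ without sign or shift errors — this is where the discrepancy hinted at by ``almost'' lives, and it should be isolated and stated cleanly rather than swept under the rug.
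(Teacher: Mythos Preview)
Your proposal rests on a misidentification of the knot $L$. Eftekhary's longitude Floer homology is identified with $\hfkhat(\sthree_0(K),\mu;\,\cdot\,)$ where $\mu$ is a \emph{meridian} of $K$, not the Seifert longitude $\lambda$. Indeed, after $0$-surgery the Seifert longitude bounds the core disc of the surgery handle and is therefore nullhomotopic in $\sthree_0(K)$; in particular your assertion that ``$[\lambda]$ generates $H_1(\sthree_0(K))\cong\Z$'' is false, and with $L=\lambda$ both the vanishing interval and the knot-conjugation shift collapse (since $PD[\lambda]=0$). The paper makes the identification $\lfh(K,\fraks)=\hfkhat(\sthree_0(K),\mu;\fraks+\tfrac12 PD[\mu])$ explicit, the $\tfrac12$-shift coming from Eftekhary's convention $\fraks=(\fraks_z+\fraks_w)/2$.

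With the correct knot $\mu$ the numerics change: $\mu$ meets a minimal Seifert surface $\Sigma_S$ transversely in a single positive point, so $\#(\mu\cap\Sigma_S)=1$ is odd and $lk(K,\mu)=1=\#(\mu\cap\Sigma_S)$. This places you in case~(b) of Proposition~\ref{thm:van} (odd intersection, all positive), \emph{not} the even/zero case you invoke. The resulting interval is $[-sg(K),\,sg(K)-1]$, which after the $\tfrac12$-shift becomes $[-sg(K)+\tfrac12,\,sg(K)-\tfrac12]$ --- matching Eftekhary exactly, with no off-by-one. Your reading of ``almost'' is also off: it refers to the fact that Eftekhary's theorem has four assertions and the paper recovers three; the missing one is the \emph{non-vanishing} at the extremal $\spinc$-structures (genus detection), not a boundary discrepancy in the vanishing range. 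Once you replace $\lambda$ by $\mu$ and use case~(b), the rest of your outline (knot conjugation for the symmetry $\fraks\mapsto-\fraks$) is essentially the paper's argument.
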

Eftekhary's Theorem~3.2 on longitude Floer homology consists of four 
statements of whom we recover three.

\subsection{Calculations II} 
When bringing homologically non-trivial knots into the picture the 
behavior of the knot Floer homologies change: By a result of Ozsv\'{a}th and 
Szab\'{o} the Heegaard Floer homology $\hfhat(Y)$ of every closed, oriented 
$3$-manifold is non-zero. If $K$ is a homologically trivial knot in $Y$, there 
is a spectral sequence from
the knot Floer homology $\hfkhat(Y,K)$ converging to $\hfhat(Y)$. In consequence,
for $K$ homologically trivial, the associated knot Floer homology is non-zero.
In contrast to this, in \cite{Saha} we discovered (and used implicitly) that $\stwo\times\sone$ admits a homologically essential knot whose knot Floer homology 
is completely zero.
\begin{prop}[see~Proof of Theorem 7.4~of~\cite{Saha}]\label{oldres} Denote 
by $K^*$ a fiber of the $\sone$-bundle $\stwo\times\sone$. The associated 
knot Floer homology of the pair $(\stwo\times\sone,K^*)$ 
vanishes, i.e.~the group $\hfkhat(\stwo\times\sone,K^*)$ is zero.
\end{prop}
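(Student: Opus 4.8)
The plan is to deduce the vanishing directly from the adjunction inequality (Theorem~\ref{myadjuncineq}) together with the symmetries established earlier, namely conjugation symmetry (Proposition~\ref{conjuginvar}) and knot conjugation symmetry (Corollary~\ref{mysymmetry2}); this gives an alternative to the explicit Heegaard diagram computation of \cite[proof of Theorem~7.4]{Saha}. First I would record the ambient topology: writing $F=\stwo\times\{pt\}$ for a fibre of the other projection $\stwo\times\sone\to\sone$, we have $H_2(\stwo\times\sone)\cong\Z$ generated by $[F]$, $H_1(\stwo\times\sone)\cong\Z$ generated by $[K^*]$, and $[F]\cdot[K^*]=\pm1$. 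Since $H^2(\stwo\times\sone)$ is torsion free, $c_1$ is injective on $\spinc(\stwo\times\sone)$, so I may write $\spinc(\stwo\times\sone)=\{\fraks_n\}_{n\in\Z}$ with $\langle c_1(\fraks_n),[F]\rangle=2n$; in particular $\fraks_0$ is the only $\spinc$-structure with trivial first Chern class.

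As $F$ is a sphere whereas Theorem~\ref{myadjuncineq} requires $g(F)>0$, the next step is to raise its genus without disturbing the relevant data. Since $\#(K^*\cap F)=1$ already, I would tube together two small disks of $F$ by a trivial $1$-handle contained in a ball disjoint from $K^*$ — the handle-addition device used in the proof of Proposition~\ref{thm:van} — obtaining an embedded torus $F'$ with $[F']=[F]$ and $\#(K^*\cap F')=1$. Orienting $F'$ so that $\#(F',K^*)\leq 0$ places us in case~$(i)$ of Theorem~\ref{myadjuncineq}, which shows that $\hfkhat(\stwo\times\sone,K^*;\fraks)\neq 0$ implies $-\langle c_1(\fraks),[F]\rangle\leq 2g(F')-2=0$.

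I would then play this one-sided bound against conjugation symmetry. Proposition~\ref{conjuginvar} gives $\hfkhat(\stwo\times\sone,K^*;\fraks)\cong\hfkhat(\stwo\times\sone,K^*;\mJ(\fraks))$ with $c_1(\mJ(\fraks))=-c_1(\fraks)$, so non-vanishing at $\fraks$ also forces the inequality above for $\mJ(\fraks)$, giving $\langle c_1(\fraks),[F]\rangle\geq 0$ and $\leq 0$ at once. Hence $c_1(\fraks)=0$, that is $\fraks=\fraks_0$, so $\hfkhat(\stwo\times\sone,K^*;\fraks_n)=0$ for all $n\neq 0$. Only the torsion class $\fraks_0$ remains, and there the adjunction inequality is vacuous; to dispose of it I would use knot conjugation symmetry. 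By Corollary~\ref{mysymmetry2} we have $\hfkhat(\stwo\times\sone,K^*;\fraks_0)\cong\hfkhat(\stwo\times\sone,K^*;\mN(\fraks_0))$ with $\mN=\mJ\circ\mA$ and $\mA(\fraks_0)=\fraks_0+PD[K^*]$; since $c_1(\fraks_0+PD[K^*])=2\,PD[K^*]\neq 0$, the class $\mN(\fraks_0)$ is not torsion, so by the previous step $\hfkhat(\stwo\times\sone,K^*;\mN(\fraks_0))=0$, whence $\hfkhat(\stwo\times\sone,K^*;\fraks_0)=0$ as well. Assembling the cases gives $\hfkhat(\stwo\times\sone,K^*)=0$.

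The step I expect to need the most attention is the genus raising: Theorem~\ref{myadjuncineq} is stated only for positive genus, so one must genuinely verify that a handle can be attached inside a ball disjoint from $K^*$ in such a way that neither $[F]$ nor the single intersection point with $K^*$ is changed. Conceptually the whole argument is the broken-symmetry picture indicated after Corollary~\ref{mysymmetry2}: the absence of absolute values in case~$(i)$ of Theorem~\ref{myadjuncineq}, combined with the $PD[K^*]$-shift built into knot conjugation, is precisely what rules out the last $\spinc$-structure.
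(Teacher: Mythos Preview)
There is a genuine gap, and it lies exactly where you invoke conjugation symmetry. Proposition~\ref{conjuginvar} states
\[
  \hfkhat(Y,K;\fraks)\cong\hfkhat(Y,-K;\mJ(\fraks)),
\]
with the knot orientation \emph{reversed}; it does not give an isomorphism between $\hfkhat(Y,K^*;\fraks)$ and $\hfkhat(Y,K^*;\mJ(\fraks))$. If you try to close the loop by passing from $-K^*$ back to $K^*$ via the adjunction inequality for $-K^*$ (using $-F'$, which now has $\#(-F',-K^*)\leq0$), you obtain $\langle c_1(\mJ(\fraks)),[F']\rangle\leq 0$, i.e.\ $\langle c_1(\fraks),[F']\rangle\geq 0$ --- the \emph{same} inequality you already had, not the opposite one. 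Using point-swap (Proposition~\ref{mysymmetry}) instead introduces the shift by $PD[K^*]$, and a short computation shows that the two inequalities together only pin $\langle c_1(\fraks),[F']\rangle$ down to the set $\{0,2\}$; knot conjugation $\mN$ then merely interchanges these two $\spinc$-structures rather than forcing either group to vanish.

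This is not a repairable detail but a structural obstruction: your argument uses only Theorem~\ref{myadjuncineq}(i) and the symmetries of \S\ref{symmetries}, and these apply verbatim to $(\fonee,K^*)=(T^3,K^*)$, where the homology is $\ztwo^4\neq 0$ (see \S\ref{computefoone}). Remark~\ref{remark01} makes exactly this point: the one-sided inequality in case~(i) cannot be upgraded to an absolute-value bound, and the ``broken symmetry'' always leaves a pair of $\spinc$-structures on which the argument is silent. The paper therefore proves Proposition~\ref{oldres} by the direct Heegaard-diagram computation of Figure~\ref{Fig:modelknotfh}; something of comparable specificity to $\stwo\times\sone$ is genuinely required.
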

Ozsv\'{a}th and Szab\'{o} show in \cite{OsZa05} that for a given contact 
manifold $(Y,\xi)$ there is a contact geometric invariant 
$\chat(\xi)\in\hfhat(-Y)$. Indeed, this invariant is an obstruction to 
overtwistedness of $\xi$ and it is particularly powerful as demonstrated 
by results of Lisca and Stipsicz (see~\cite{Stip02,Stip03,Stip04}). We used Proposition~\ref{oldres} 
in \cite{Saha} to identify a configuration, such that every 
contact manifold $(Y,\xi)$ with a contact surgery presentation admitting
this configuration has vanishing contact 
invariant $\chat(\xi)$ (see \cite[Theorem 7.4]{Saha}). In light of this result, 
finding pairs $(Y,K)$ for which the knot Floer homology is zero will allow us to identify additional
configurations in contact surgery diagrams that force the contact element to vanish. Furthermore, 
we demonstrate in \S\ref{computefoone} that results 
like Proposition~\ref{oldres} can be of significant help in calculations when combined
with surgery exact triangles.
Having a closer look at the example in Proposition~\ref{oldres} we see that this is a very
special situation. So, it is natural to seek for additional examples among 
$\sone$-bundles over compact, orientable surfaces. As an application of the
adjunction inequalities, in combination with the symmetries we prove that the search
will be unsuccessful.
\begin{theorem}\label{mythm:01}
Let $Y$ be a $\sone$-bundle over a closed, oriented surface $\Sigma$ and let 
$K^*$ be a knot isotopic to a fiber. Then $\hfkhat(Y,K^*)$ is non-zero if and only
if $Y$ is not $\stwo\times\sone$.
\end{theorem}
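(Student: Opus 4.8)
The plan is to split the argument into the two directions of the ``if and only if'', treating the easy direction first and then reducing the main direction to the adjunction inequalities of Theorem~\ref{myadjuncineq}. For the easy direction, if $Y=\stwo\times\sone$ then $K^*$ is a fiber and Proposition~\ref{oldres} tells us that $\hfkhat(\stwo\times\sone,K^*)=0$, so non-vanishing of $\hfkhat(Y,K^*)$ already forces $Y\neq\stwo\times\sone$. Conversely, suppose $Y$ is an $\sone$-bundle over a closed, oriented surface $\Sigma$ with $Y\neq\stwo\times\sone$; I must show $\hfkhat(Y,K^*)\neq0$.

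For the main direction I would argue by contradiction: assume $\hfkhat(Y,K^*;\fraks)=0$ for every $\fraks\in\spinc(Y)$, and derive that $Y$ must be $\stwo\times\sone$. The point is that once $\Sigma$ has positive genus, $Y$ contains a closed oriented surface $F$ of genus $g(F)=g(\Sigma)>0$ intersecting the fiber $K^*$ in exactly one point --- namely (a pushoff of) a section-like surface, or more robustly a horizontal torus/surface that meets each fiber transversally once --- and also $F\cdot K^* = \#(F,K^*)=0$ can be arranged after adjusting orientations and framings so the hypotheses $\#(K^*\cap F)\le 1$ and $\#(F,K^*)\le 0$ of Theorem~\ref{myadjuncineq} are met. (When $\Sigma=S^2$ the bundle is determined by its Euler number $e$, and the cases $e=0$ and $e\neq0$ must be handled separately, the former being $\stwo\times\sone$ and the latter a lens space $L(e,1)$ or $S^3$ for which $K^*$ is nullhomologous; I would check these small cases by hand using that $\hfkhat$ is non-zero whenever $[K^*]=0$, via the spectral sequence to $\hfhat(Y)\neq0$.) For the positive genus case, part~(i) of Theorem~\ref{myadjuncineq} (if $K^*$ meets $F$) says that non-vanishing of $\hfkhat(Y,K^*;\fraks)$ forces $-\langle c_1(\fraks),[F]\rangle\le 2g(F)-2$; combining this with the knot conjugation symmetry of Corollary~\ref{mysymmetry2}, which identifies $\hfkhat(Y,K^*;\fraks)$ with $\hfkhat(Y,K^*;\mN(\fraks))$ where $\mN(\fraks)=\overline{\fraks}+PD[K^*]$, I get a second inequality bounding $\langle c_1(\fraks),[F]\rangle$ from the other side, shifted by $\langle PD[K^*],[F]\rangle = K^*\cdot F = \pm1$. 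Together these two one-sided bounds confine the $\spinc$-structures with possibly non-trivial $\hfkhat$ to a finite window; the vanishing assumption says this window is in fact empty, which via the chern-class formula (Lemma~\ref{chernclassformula}) in the adapted Heegaard diagram (Proposition~\ref{mydiagram}) translates into a numerical constraint forcing $g(\Sigma)=0$ and then the Euler number to be $0$, i.e.\ $Y\cong\stwo\times\sone$ --- contradicting our hypothesis.

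The main obstacle, as in the proof of Theorem~\ref{myadjuncineq} itself, is geometric: producing a surface $F$ in the given $\sone$-bundle which simultaneously meets the fiber $K^*$ in at most one point and has controlled self-intersection and chern-class pairing, so that the adapted-diagram machinery applies. For bundles over a genus $\ge 1$ surface one wants a ``horizontal'' surface transverse to the fibers, which need not exist on the nose for bundles of non-zero Euler number; the resolution (paralleling the handle-addition trick described in the proof of Proposition~\ref{thm:van}) is to start from a section over $\Sigma$ minus a disk and cap off, accepting extra intersections with $K^*$ that are then removed by adding handles to $F$, at the cost of raising $g(F)$ --- which is harmless since the inequalities only improve our control as $g(F)$ is tracked. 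I expect the bookkeeping of how the window of surviving $\spinc$-structures depends on $g(\Sigma)$, the Euler number $e$, and $\langle PD[K^*],[F]\rangle$ to be the delicate part, but it is exactly the same bookkeeping already carried out in Proposition~\ref{thm:van}, now specialized to the fiber of an $\sone$-bundle, so I would organize the proof as: (1) dispose of $\Sigma=S^2$ directly; (2) for $g(\Sigma)>0$, build $F$; (3) apply Theorem~\ref{myadjuncineq}(i) and Corollary~\ref{mysymmetry2} to get the two-sided window; (4) invoke the vanishing hypothesis and Lemma~\ref{chernclassformula} to extract a numerical contradiction.
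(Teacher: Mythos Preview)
Your proposal has a genuine logical gap in the main direction. The adjunction inequality of Theorem~\ref{myadjuncineq} together with knot conjugation symmetry (Corollary~\ref{mysymmetry2}) only tell you that $\hfkhat(Y,K^*;\fraks)$ \emph{vanishes} for $\fraks$ outside a finite window; they say nothing about non-vanishing inside that window. Your step ``the vanishing assumption says this window is in fact empty'' is backwards: the window is determined by $g(\Sigma)$ and the Euler number alone, and assuming $\hfkhat(Y,K^*;\fraks)=0$ for every $\fraks$ is perfectly compatible with the window being non-empty --- it just means the homology happens to vanish there as well. Lemma~\ref{chernclassformula} does not help either: it tells you which $\spinc$-structures are represented by \emph{generators} of the chain complex, not that the homology is non-zero (for $\stwo\times\sone$ itself the chain complex has two generators yet trivial homology). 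No numerical contradiction can be extracted from vanishing-type inputs alone; a non-vanishing argument needs a different source.

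The paper's proof supplies that source via the surgery exact triangle~(\ref{seq:doit2}) relating $\hfkhat(\#^{2g}(\stwo\times\sone),U)$, $\hfkhat(\fge,K^*)$ and $\hfkhat(\fgepo,K^*)$, and then applies knot conjugation symmetry not to the groups but to the \emph{cobordism maps} (Proposition~\ref{knotconjug}). When $e$ is even, $\mN$ acts without fixed points on the relevant $\spinc$-structures, and the symmetry forces the map $\Foo_W$ into $\hfkhat(\fge,K^*;\fraks_*)\oplus\hfkhat(\fge,K^*;\mN(\fraks_*))$ to have the form $x\mapsto (\mN\circ A(x),A(x))$, which can never be surjective onto two isomorphic non-zero summands; exactness then forces $\hfkhat(\fgepo,K^*)\not=0$. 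A dual argument on $\Hoo$ covers the remaining parity. This is an honest non-vanishing mechanism, and it is what your outline is missing.

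Two smaller points. In the genus-$0$ case with $|e|>1$ the fiber $K^*$ generates $H_1(L(e,1))\cong\Z_{|e|}$ and is \emph{not} null-homologous, so the spectral-sequence argument to $\hfhat$ does not apply; the paper handles this by the direct computation of Proposition~\ref{stwosoneeuler}. Also, if $F$ meets $K^*$ transversely in a single point then $\#(F,K^*)=\pm1$, not $0$; the hypothesis $\#(F,K^*)\le0$ in Theorem~\ref{myadjuncineq} is met by choosing the orientation so that the intersection is negative, not by making it vanish.
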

We can interpret this result in such a way that knot Floer homology of 
a fiber distinguishes 
the manifold $\stwo\times\sone$ from other $\sone$-bundles.\vspace{0.3cm}\\
We prove Theorem~\ref{mythm:01} in two steps: In the first step we 
reprove Proposition~\ref{oldres} and give explicit calculations for 
genus-$0$ base and non-zero Euler number. In the second step, we prove 
the general result, i.e.~for non-zero genus and arbitrary Euler number. The 
proof will mainly rely on applying knot conjugation symmetry and a suitable 
surgery exact triangle. 
Furthermore, we provide an explicit calculation in \S\ref{computefoone} of the 
case of genus-$1$ base and Euler number $0$. This serves as a demonstration in 
what way results like Proposition~\ref{oldres} can help to do explicit 
calculations.\vspace{0.3cm}\\

\section{Preliminaries}\label{preliminaries}
\subsection{Heegaard Floer homologies}\label{prelim:01:1}
A $3$-manifold $Y$ can be described by a Heegaard diagram, which is a
triple $(\Sigma,\afat,\bfat)$, where $\Sigma$ is an oriented genus-$g$ surface
and $\afat=\{\alpha_1,\dots,\alpha_g\}$, 
$\bfat=\{\beta_1,\dots,\beta_g\}$ are two sets of pairwise disjoint simple closed 
curves in $\Sigma$ called {\bf attaching circles}. 
Each set of curves $\afat$ and $\bfat$ is required to consist of 
linearly independent curves in $H_1(\Sigma,\Z)$. In the 
following we will talk about the curves in the set $\afat$ (resp.~$\bfat$) as  
{\bf $\afat$-curves} (resp.~{\bf $\bfat$-curves}). Without loss 
of generality we may assume that the 
$\afat$-curves and $\bfat$-curves intersect 
transversely. To a Heegaard diagram we may associate the triple
$(\symg,\talpha,\tbeta)$ consisting of the $g$-fold symmetric power of
$\Sigma$, 
\[
  \symg=\Sigma^{\times g}/S_g,
\] 
and the submanifolds $\talpha=\alpha_1\times\dots\times\alpha_g$
and $\tbeta=\beta_1\times\dots\times\beta_g$. We define 
$\cfhat(\Sigma,\afat,\bfat)$ as the free $\ztwo$-module 
generated by the set
$\talpha\cap\tbeta$. In the following
we will just write $\cfhat$. For two intersection
points $\xfat,\yfat\in\talpha\cap\tbeta$ define $\pitwo(\xfat,\yfat)$ to be the set of
homology classes of {\bf Whitney discs} 
$\phi\co\disc\lra\symg$ ($\disc\subset\C$) that 
{\bf connect $\xfat$ with $\yfat$}. The map $\phi$ is called {\bf Whitney} if 
$\phi(\disc\cap\{Re<0\})\subset\talpha$ and $\phi(\disc\cap\{Re>0\})\subset\tbeta$. 
We call $\disc\cap\{Re<0\}$ the {\bf $\afat$-boundary of $\phi$} and
$\disc\cap\{Re>0\}$ the {\bf $\bfat$-boundary of $\phi$}. Such 
a Whitney disc {\bf connects $\xfat$ with $\yfat$} if $\phi(i)=\xfat$ and $\phi(-i)=\yfat$. 
Note that $\pitwo(\xfat,\yfat)$ can be interpreted as the subgroup of elements in
$H_2(\symg,\talpha\cup\tbeta)$ represented by discs with appropriate 
boundary conditions. We endow 
$\symg$ with a symplectic structure~$\omega$. By choosing a path of almost complex 
structures $\mJ_s$ on $\symg$ suitably (cf.~\cite{OsZa01})
all moduli spaces of holomorphic Whitney discs are Gromov-compact manifolds.
Denote by $\modphi$ the set of holomorphic Whitney discs in the equivalence
class $\phi$, and $\mu(\phi)$ the formal dimension of $\modphi$. Denote by 
$\modhatphi=\modphi/\R$ the quotient under the translation action of 
$\R$ (cf.~\cite{OsZa01}). Define $H(x,y,k)$ to be the subset of classes in
$\pitwo(\xfat,\yfat)$ that admit moduli spaces of dimension $k$. Fix a point 
$z\in\Sigma\backslash(\afat\cup\bfat)$ and define the map 
\[
  n_z\co\pitwo(\xfat,\yfat)\lra\Z,\,\phi\lmt\#(\phi,\{z\}\times\symgmo).
\] 
A boundary operator $\parhat\co\cfhat\lra\cfhat$ is given by defining it
on the generators $\xfat$ of $\cfhat$ by
\[
  \parhat\xfat
  =
  \sum_{\yfat\in\talpha\cap\tbeta}
  \sum_{\phi\in H(\xfat,\yfat,1)}
  \!\!\!\!\#\modhatphi\cdot U^{n_z(\phi)}\yfat.
\]
These homology groups are topological invariants of the manifold $Y$. 
We would like to note that not all Heegaard diagrams are suitable
for defining Heegaard Floer homology; there is an additional
condition that has to be imposed called {\bf weak admissibility}
(see~\cite[Definition~4.10]{OsZa01}).

\subsection{Knot Floer Homology}\label{knotfloerhomology}
Given a knot $K\subset Y$, we can specify a certain subclass of 
Heegaard diagrams.
\begin{definition} \label{knotdiagram} A Heegaard 
diagram $(\Sigma,\afat,\bfat)$ is said to
be {\bf adapted} to the knot $K$ if $K$ is isotopic to a knot lying
in $\Sigma$ and $K$ intersects $\beta_1$ once transversely and is
disjoint from the other $\bfat$-circles.
\end{definition}
Since $K$ intersects $\beta_1$ once and is disjoint from the other 
$\bfat$-curves we know that $K$
intersects the core disc of the $2$-handle represented by $\beta_1$ once
and is disjoint from the others (after possibly isotoping the knot $K$).
Every pair $(Y,K)$ admits a Heegaard diagram adapted to $K$.
Having fixed such a Heegaard diagram $(\Sigma,\afat,\bfat)$ we can encode 
the knot $K$ in a pair of points. After isotoping $K$ onto $\Sigma$, 
we fix a small interval $I$ in $K$ containing the intersection point 
$K\cap\beta_1$. This interval should be chosen small enough such 
that $I$ does not contain any other intersections of $K$ with other 
attaching curves. The boundary $\partial I$ of $I$ determines two 
points in $\Sigma$ that lie in the complement of the attaching circles, 
i.e.~$\partial I=z-w$, where the orientation of $I$ is given by the 
knot orientation. This leads to a doubly-pointed Heegaard diagram 
$(\Sigma,\afat,\bfat,w,z)$. Conversely, a doubly-pointed Heegaard 
diagram uniquely determines a topological knot class: Connect 
$w$ with $z$ in the complement of the attaching circles $\afat$ 
and $\bfat\backslash\beta_1$ with an arc $\delta$ that crosses 
$\beta_1$, once. Connect $z$ with $w$ in the complement of $\bfat$
using an arc $\gamma$. The union $\delta\cup\gamma$ represents the 
knot class $K$. The orientation of $K$ is given by orienting $\delta$ such 
that $\partial\delta=z-w$. \vspace{0.3cm}\\
The knot chain complex $\cfkom(Y,K)$ is the free $\ztwo[U]$-module generated
by the intersections $\talpha\cap\tbeta$. Analogous as above we define
$\mMhat^{(i,j)}_{(\afat,\bfat)}(\xfat,\yfat)$ as the holomorphic Whitney disks
connecting $\xfat$ with $\yfat$ such that $(n_z(\phi),n_w(\phi))$ equals 
$(i,j)$, after modding out the translation action. The boundary operator $\partial^{\bullet,-}_{\afat\bfat}$, 
for $\xfat\in\talpha\cap\tbeta$, is defined by
\[ 
 \partial^{\bullet,-}_{\afat\bfat}(\xfat) 
 =
 \sum_{\yfat\in\talpha\cap\tbeta,j\geq0} 
 \!\!\!\!\#\Bigl(\mMhat^{(0,j)}_{(\afat,\bfat)}(\xfat,\yfat)\Bigr)
 \cdot U^{j}\yfat.
\]
The associated homology theory is denoted by $\hfkom(Y,K)$. By
setting $U=0$ we obtain the theory $\hfkoo(Y,K)$ which we also denote 
by $\hfkhat(Y,K)$. It is also possible to define variants such 
as $\hfkop$ and $\hfkoi$. For details we point the reader to \cite{OsZa03}.
Concerning admissibility, note that for the versions $\hfkoc$ we restrict
to doubly pointed Heegaard diagrams $(\Sigma,\afat,\bfat,w,z)$ such that
the single pointed diagram $(\Sigma,\afat,\bfat,z)$ is weakly 
admissible (see~\cite[Definition~4.10]{OsZa01}).
Finally, to justify our notation, observe, that it is possible to {\it swap the roles}
of $z$ and $w$ and so derive knot Floer theories denoted by $\hfkmo$, $\hfkio$ and 
$\mbox{\rm HFK}^{+,\bullet}$.\vspace{0.3cm}\\
For a treatment of cobordism maps between the various versions of knot Floer
homology we point the reader to \cite[\S8 and \S9]{Saha04}.

\section{A new adjunction inequality}
Ozsv\'ath and Szab\'o derived adjunction inequalities for the ordinary Heegaard 
Floer homology in \cite{OsZa02} and gave a knot theoretic version using Seifert 
surfaces in \cite{OsZa04}. An adjunction inequality usually gives an upper bound on 
the quantity $\bigl|\bigl<c_1(\fraks),[F]\bigr>\bigr|$ for arbitrary embedded 
surfaces $F$ and $\spinc$-structures $\fraks$ for which the Floer homology groups
are non-zero. Suppose we are given a closed, oriented $3$-manifold $Y$ and in 
it a homologically non-trivial, closed surface $F$ of genus $g(F)$. The main 
observation to prove our result is to see that we can find a Heegaard diagram 
which is adapted to the surface.
\begin{lem}[Lemma~7.3.~of~\cite{OsZa02}]\label{oszahelp} 
Suppose $F\subset Y$ is a homologically 
non-trivial, embedded two-manifold with $g(F)>0$, then 
$Y$ admits a genus $g$ Heegaard diagram
$(\Sigma,\afat,\bfat)$, with $g>2g(F)$, containing a periodic domain $\mathcal{P}$ 
representing $[F]$, all of whose multiplicities are one or zero. Moreover, 
$\mathcal{P}$ is a connected surface whose Euler characteristic is equal to 
$-2g(F)$, and $\mathcal{P}$ is bounded by $\beta_1$ and $\alpha_{2g+1}$.
\end{lem}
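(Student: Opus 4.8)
The plan is to build the Heegaard diagram by hand, from a handle decomposition of $Y$ adapted simultaneously to $F$ and to a curve dual to it, and then to read $\mathcal P$ off the resulting diagram directly.

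Since $[F]\neq 0$, fix by Poincar\'e duality a simple closed curve $\gamma\subset Y$ meeting $F$ transversely (taking, for notational simplicity, the dual intersection number to be $\pm1$), and fix a handle decomposition of the \emph{surface} $F$ with one $0$-handle, $2g(F)$ one-handles and one $2$-handle. Thickening this decomposition in a bicollar $F\times[-1,1]$, and then attaching one more three-dimensional $1$-handle $h_\gamma$ along the arc $\gamma\smallsetminus\nu(F)$ (whose two feet lie on $F\times\{1\}$ and $F\times\{-1\}$), produces a compact submanifold $N\subset Y$ built from one $0$-handle, $2g(F)+1$ one-handles and one $2$-handle, with connected boundary $\Sigma_0$; an Euler characteristic count gives $\chi(N)=1-2g(F)$, so $\Sigma_0$ has genus $2g(F)$. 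A short isotopy puts $F$ into the position $F=\mathcal P\cup D_1\cup D_2$, where $\mathcal P\subset\Sigma_0$ is the embedded subsurface ``$F$ minus two disks'' (a genus $g(F)$ surface with two boundary circles, $\chi(\mathcal P)=\chi(F)-2=-2g(F)$), $D_1$ is the core disk of the $2$-handle of $N$, and $D_2$ is the cocore disk of $h_\gamma$.

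Next I would extend this to all of $Y$: the complement $Y\smallsetminus\operatorname{int}N$ has the single boundary component $\Sigma_0$, hence a handle decomposition on $\Sigma_0\times[0,1]$ using only $1$- and $2$-handles and a single $3$-handle. Concatenating and invoking the handle rearrangement theorem (handles may be reattached in order of increasing index) puts the whole decomposition of $Y$ into self-indexing order: one $0$-handle, all $1$-handles, all $2$-handles, one $3$-handle. Its middle level $\Sigma$ is the Heegaard surface; its genus is $g=2g(F)+1+(\#\,1\text{-handles of the complement})>2g(F)$; the belt circles of the $1$-handles form $\afat$ and the attaching circles of the $2$-handles form $\bfat$. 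After the rearrangement $F$ still meets each of the two handlebodies in a single disk, namely $D_2$ in the $\alpha$-handlebody and $D_1$ in the $\beta$-handlebody, and $F\cap\Sigma=\mathcal P$. Therefore $\mathcal P$ is a periodic domain representing $[F]$; it is connected with $\chi(\mathcal P)=-2g(F)$; being an embedded subsurface of $\Sigma$, all of its multiplicities are $0$ or $1$; and its boundary is the single $\beta$-curve $\partial D_1$ together with the single $\alpha$-curve $\partial D_2$, which we relabel $\beta_1$ and $\alpha_{2g+1}$.

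The step I expect to require genuine care is keeping $F$ in this ``thin'' position — meeting $\Sigma$ in an embedded subsurface and each handlebody in one disk — throughout the rearrangement of the ad hoc decomposition $N\cup(Y\smallsetminus\operatorname{int}N)$ into self-indexing order. Concretely, one must check that the attaching circle of the $2$-handle of $N$ can be isotoped off the feet of the complement's $1$-handles and that the ensuing handle slides can be carried out in $\Sigma\smallsetminus\mathcal P$; it is precisely the fact that $F$ carries an honest surface handle structure (rather than that of an arbitrary $2$-complex) that makes these moves available. Granting this, the Euler characteristic statement, the multiplicity-$0/1$ property, and the identification of the two boundary curves are all immediate.
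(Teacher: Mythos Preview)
The paper does not prove this lemma directly---it is quoted from Ozsv\'ath--Szab\'o---but the proof of Proposition~\ref{mydiagram} contains exactly this construction as its first step, and your argument tracks it closely: thicken a handle decomposition of $F$, attach the single $1$-handle joining the two sides of the bicollar, extend over the complement, and rearrange into self-indexing order. The one substantive difference is how the connecting $1$-handle arises. You manufacture it in advance from a dual curve $\gamma$; the paper instead argues a posteriori that, since $[F]\neq 0$ implies $F$ is non-separating, at least one of the complement's $1$-handles must connect the region $S\subset\Sigma$ coming from $F$ with its complement, and then handle slides reduce to exactly one such handle.

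Your variant is slightly cleaner once it is set up, but as written it has a small gap: Poincar\'e duality only produces a curve $\gamma$ with algebraic intersection $\pm 1$ when $[F]$ is \emph{primitive} in $H_2(Y;\Z)$, which the hypothesis $[F]\neq 0$ does not guarantee. The repair is immediate---what you actually need is a single arc in $Y\smallsetminus F$ joining the $+$ and $-$ sides of the bicollar, and such an arc exists precisely because $F$ is non-separating. With that correction your construction and the paper's coincide. The care you flag at the end (isotoping the feet of the complement's $1$-handles off $\mathcal P$ and pushing $\beta_1$ past them) is exactly the step the paper dispatches with the phrase ``by isotopies of the attaching spheres and handle slides of the $1$-handles''; it is routine handle rearrangement and no special feature of the situation is required beyond the fact that $\Sigma_0\smallsetminus\mathcal P$ is non-empty.
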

In the following, we will call such a Heegaard diagram {\bf $F$-adapted}. With this 
Heegaard diagram Ozsv\'ath and Szab\'o were able to derive the following chern class 
formula.
\begin{lem}[Proposition~7.4.~of~\cite{OsZa02}]\label{chernclassformula} If $\xfat=\{x_1,\dots,x_g\}$ is an intersection point, and $z$ is chosen in 
the complement of the periodic domain $\mathcal{P}$ of Lemma~\ref{oszahelp}, then
\[
 \bigl<c_1(\fraks_z(\xfat)),\mathcal{H}(\mathcal{P})\bigr>
 =2-2g+2\#\{x_i\;\mbox{\rm in the interior of } \mathcal{P}\}.
\]
\end{lem}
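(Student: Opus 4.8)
The plan is to feed the periodic domain $\mathcal{P}$ of Lemma~\ref{oszahelp} into Ozsv\'ath and Szab\'o's first Chern class formula for periodic domains (see \cite[Section~7]{OsZa01}) and to read off each of its terms from the special structure of an $F$-adapted diagram. Recall that for a periodic domain $\mathcal{Q}$ that formula has, in one standard normalization, the shape
\[
  \langle c_1(\fraks_z(\xfat)),\mathcal{H}(\mathcal{Q})\rangle
  = e(\mathcal{Q}) + 2\,n_{\xfat}(\mathcal{Q}) - 2\,n_z(\mathcal{Q}),
\]
where $e(\mathcal{Q})$ is the Euler measure of $\mathcal{Q}$ and $n_{\xfat}(\mathcal{Q})=\sum_i n_{x_i}(\mathcal{Q})$, with $n_{x_i}(\mathcal{Q})$ the average of the four local multiplicities of $\mathcal{Q}$ in the regions surrounding the $i$-th coordinate $x_i$ of $\xfat$. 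Thus the task reduces to evaluating these three ingredients for $\mathcal{P}$.

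Two of them are immediate. By hypothesis $z$ lies in the complement of $\mathcal{P}$, so $n_z(\mathcal{P})=0$. Next, since all multiplicities of $\mathcal{P}$ are $0$ or $1$ and, by Lemma~\ref{oszahelp}, $\partial\mathcal{P}$ is the disjoint union of the two embedded circles $\beta_1$ and $\alpha_{2g+1}$ — so $\mathcal{P}$ has no corners — its Euler measure is just the Euler characteristic of the underlying subsurface, namely $e(\mathcal{P})=\chi(\mathcal{P})=-2g$ (writing $g$ for $g(F)$, as in the statement). Capping off the two boundary circles turns $\mathcal{P}$ into the closed surface $\mathcal{H}(\mathcal{P})=[F]$, of Euler characteristic $2-2g$.

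For the third ingredient, the only real computation, note that the attaching circles meet $\partial\mathcal{P}$ transversally, so each coordinate $x_i$ of $\xfat$ is of exactly one of three types. If $x_i$ lies in the interior of $\mathcal{P}$, all four surrounding regions have multiplicity $1$ and $n_{x_i}(\mathcal{P})=1$; if $x_i$ lies in the complement of $\overline{\mathcal{P}}$, then $n_{x_i}(\mathcal{P})=0$; and if $x_i$ lies on $\partial\mathcal{P}$ — that is, on $\beta_1$ or on $\alpha_{2g+1}$ — then exactly two of the four surrounding regions lie in $\mathcal{P}$ and $n_{x_i}(\mathcal{P})=\tfrac12$. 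Since $\xfat$ meets each attaching circle exactly once, and since $\beta_1$ and $\alpha_{2g+1}$ are disjoint, precisely two of the $x_i$ are of the last type — and neither of these lies in the interior of $\mathcal{P}$ — so $n_{\xfat}(\mathcal{P})=1+\#\{x_i\text{ in the interior of }\mathcal{P}\}$.

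Substituting these values gives
\[
  \langle c_1(\fraks_z(\xfat)),\mathcal{H}(\mathcal{P})\rangle
  = -2g + 2\bigl(1+\#\{x_i\text{ in the interior of }\mathcal{P}\}\bigr)
  = 2-2g+2\#\{x_i\text{ in the interior of }\mathcal{P}\},
\]
which is exactly the claim — and one sees that the $+2$ contributed by the two ``half-integer'' boundary coordinates is precisely what upgrades $\chi(\mathcal{P})=-2g$ to the Euler characteristic $2-2g$ of the closed surface $\mathcal{H}(\mathcal{P})$. I expect the main obstacle to lie not in the geometry but in the bookkeeping of conventions: matching the precise form of the first Chern class formula (the sign in front of $n_{\xfat}$, and whether the Euler-measure term is stated for the domain or for its capped-off surface), orienting $\mathcal{P}$ so that it represents $[F]$ and not $-[F]$, and verifying carefully that the two coordinates on $\partial\mathcal{P}$ really contribute $\tfrac12$ apiece and cannot coincide at a crossing of $\beta_1$ with $\alpha_{2g+1}$ — which, since those curves are disjoint, they cannot. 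Once the $F$-adapted diagram of Lemma~\ref{oszahelp} is fixed, the remaining steps are routine.
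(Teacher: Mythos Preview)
Your argument is correct. The present paper does not actually prove this lemma; it simply quotes it as \cite[Proposition~7.4]{OsZa02}, so there is no ``paper's own proof'' to compare against. What you have written is essentially the original Ozsv\'ath--Szab\'o argument: specialize the general first Chern class formula $\langle c_1(\fraks_z(\xfat)),\mathcal{H}(\mathcal{P})\rangle = \hat\chi(\mathcal{P}) + 2\,\bar n_{\xfat}(\mathcal{P})$ to the particular periodic domain produced by Lemma~\ref{oszahelp}, use that $\mathcal{P}$ has multiplicities in $\{0,1\}$ and cornerless boundary $\beta_1\cup\alpha_{2g(F)+1}$ to get $\hat\chi(\mathcal{P})=-2g(F)$, and observe that exactly two coordinates of $\xfat$ sit on $\partial\mathcal{P}$, each contributing $\tfrac12$. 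Two small remarks: the term $-2n_z(\mathcal{Q})$ you include is redundant, since periodic domains satisfy $n_z=0$ by definition (and you immediately set it to zero anyway); and be careful with the overloaded symbol $g$ --- in $\{x_1,\dots,x_g\}$ it denotes the Heegaard genus, while in $2-2g$ it denotes $g(F)$, as you correctly note.
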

With this formula at hand we see that whenever there is an intersection point 
$\xfat$ whose associated $\spinc$-structure $\fraks_z(\xfat)$ equals $\fraks$ we 
have that
\[
 -\bigl<c_1(\fraks),[F]\bigr>\leq 2g(F)-2
\]
For our purposes we have to see that $F$-adaptedness and adaptedness to a 
knot $K$ can be achieved simultaneously.
\begin{prop}\label{mydiagram} Suppose we are given an embedded 
surface $F\subset Y$ with $g(F)>0$ which is
homologically non-trivial and, further, suppose we are given a knot $K\subset Y$ such 
that $\#(K\cap F)\leq 1$. Then, there is a $F$-adapted Heegaard diagram (in the 
sense of Lemma~\ref{oszahelp}) which is adapted to the knot $K$.
\end{prop}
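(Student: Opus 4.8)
The plan is to start from the $F$-adapted Heegaard diagram $(\Sigma,\afat,\bfat)$ furnished by Lemma~\ref{oszahelp}, in which the periodic domain $\mathcal{P}$ representing $[F]$ is bounded by $\beta_1$ and $\alpha_{2g+1}$, and to arrange, by a sequence of isotopies and handleslides that do not disturb the crucial features of $\mathcal{P}$, that the diagram is also adapted to $K$ in the sense of Definition~\ref{knotdiagram} --- that is, that $K$ can be isotoped into $\Sigma$ so that it meets $\beta_1$ once transversely and is disjoint from the other $\bfat$-curves. The distinguished role already played by $\beta_1$ in Lemma~\ref{oszahelp} is what makes this feasible: $\beta_1$ bounds a compressing disc for the $\bfat$-handlebody, so pushing an arc of $K$ across this disc is exactly the move that realises a single transverse intersection with $\beta_1$.

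First I would pass $K$, by a generic isotopy, into the Heegaard surface $\Sigma$ (a knot in $Y$ may always be isotoped onto a Heegaard surface of sufficiently high genus, and $g>2g(F)$ is already large). Next I would treat the two cases separately according to $\#(K\cap F)$. When $K$ is disjoint from $F$ --- case $(ii)$ of Theorem~\ref{myadjuncineq} --- the knot, sitting in $\Sigma$, lies entirely in $\Sigma\setminus\mathcal{P}$ up to isotopy, and I would use that $K$, being isotopic to a curve in $\Sigma$, can be made to hit one $\bfat$-curve once and miss the others by handleslides among the $\bfat$-curves supported away from $\mathcal{P}$; after a relabelling I may take that curve to be $\beta_1$ without destroying $\partial\mathcal{P}=\beta_1+\alpha_{2g+1}$, since $\beta_1$ itself is left fixed. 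When $\#(K\cap F)=1$ --- case $(i)$ --- the knot must cross $\mathcal{P}$, hence must cross $\partial\mathcal{P}$, and since $\mathcal{P}$ has all multiplicities $0$ or $1$ the single geometric intersection of $K$ with $F$ forces $K$ to cross $\beta_1\cup\alpha_{2g+1}$ algebraically once; by a further isotopy rel the two boundary curves I would arrange that $K$ meets exactly $\beta_1$ once and is disjoint from the remaining $\bfat$-curves, absorbing any spurious intersections with $\alpha_{2g+1}$ or other $\afat$-curves into finger moves that slide along $\mathcal{P}$ without changing the count with $\beta_1$. In either case, picking the basepoints $w,z$ on the two sides of this single $K\cap\beta_1$ point, and noting that $z$ can be chosen in the complement of $\mathcal{P}$ as required by Lemma~\ref{chernclassformula} (push $z$ off $\mathcal{P}$ through the boundary curve $\alpha_{2g+1}$), produces the doubly-pointed $F$-adapted, $K$-adapted diagram.

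The main obstacle I expect is the bookkeeping in case $(i)$: ensuring that the isotopy of $K$ which kills its intersections with the $\bfat$-curves other than $\beta_1$ can be performed without creating new intersections with $\beta_1$ and, simultaneously, without pushing $K$ back across $\alpha_{2g+1}$ in a way that would reintroduce an extra $K\cap F$ intersection or would move $z$ onto $\mathcal{P}$. This is precisely the ``simultaneous adaptedness'' difficulty flagged in the introduction, and the paper signals that when $\#(K\cap F)$ exceeds one no such diagram exists and one must instead tube the surface --- so the argument here genuinely relies on the hypothesis $\#(K\cap F)\le 1$. I would handle the bookkeeping by organising the isotopy of $K$ so that it is supported in a collar of the $\bfat$-handlebody's compressing discs and in $\Sigma\setminus(\mathcal{P}\cup N(\beta_1))$, where it cannot interact with either $\beta_1$ or the interior of $\mathcal{P}$; the condition $\#(F,K)\le 0$ on the algebraic intersection number is what guarantees the crossing with $\beta_1$ can be taken to be a single positive point compatibly with the orientation conventions fixing $\partial I = z-w$.
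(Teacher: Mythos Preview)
Your approach differs substantially from the paper's, and it contains a genuine gap. The paper does \emph{not} start from the $F$-adapted diagram of Lemma~\ref{oszahelp} and then massage it into $K$-adaptedness. Instead, it builds the Heegaard decomposition from scratch via an explicit handle decomposition: one takes a handle decomposition of $\nu F$ (coming from a handle decomposition of $F$ crossed with $[-1,1]$), extends it to $\nu F\cup\nu K$, and then attaches a relative handle decomposition of the complement. In the disjoint case $\nu K$ contributes an extra $0$-handle and $1$-handle; in the intersecting case one arranges the $0$-handle of $\nu F$ to be $\nu F\cap\nu K$ and adds the $1$-handle $\nu K\setminus(\nu F\cap\nu K)$, which then becomes the handle $\sh^{(3,1)}_{2g+1}$ producing $\alpha_{2g+1}$. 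Both adaptedness properties hold \emph{by construction}, with no isotopies or handleslides to control after the fact.

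The gap in your argument is the assertion that $K$ can be isotoped onto the given Heegaard surface $\Sigma$. This is simply false in general: whether a knot lies on a fixed Heegaard surface is governed by its tunnel number relative to that splitting, and the inequality $g>2g(F)$ says nothing about $K$. You would have to stabilise, and while stabilisation (performed away from $\mathcal{P}$) does preserve $F$-adaptedness, you have given no argument that after stabilisation the further moves needed to reduce $K\cap\bfat$ to a single point can be confined to handleslides ``supported away from $\mathcal{P}$''. Nothing prevents $K$, once placed on $\Sigma$, from crossing $\bfat$-curves that enter $\mathcal{P}$, and sliding those over one another may well destroy the multiplicity-$0$/$1$ property of the periodic domain. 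Your final paragraph essentially concedes this (``the bookkeeping in case $(i)$'') but does not resolve it; the paper's constructive route is precisely what avoids the bookkeeping.

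There is also a smaller confusion in your disjoint case: you propose to relabel so that the $\bfat$-curve $K$ meets is called $\beta_1$, while simultaneously asserting $\partial\mathcal{P}=\beta_1+\alpha_{2g+1}$ is preserved. These cannot both hold: when $K\cap F=\emptyset$, the $\bfat$-curve that $K$ crosses and the $\bfat$-curve bounding $\mathcal{P}$ are different curves (as is implicit in the proof of Theorem~\ref{myadjuncineq}, where in the disjoint case both basepoints lie outside $\mathcal{P}$).
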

\begin{proof} Choose a tubular neighborhood 
$\nu F=F\times[-1,1]$ of $F$, then the manifold 
$Y\backslash\nu F$ admits a handle decomposition $\dom_2$ relative to 
$\partial\bigl(Y\backslash\nu F\bigr)$ by $1$-handles
 $\sh^{(3,1)}_1,\dots,\sh^{(3,1)}_l$, 
$2$-handles $\sh^{(3,2)}_1,\dots\sh^{(3,2)}_k$ and one single $3$-handle 
$\sh^{(3,3)}$ 
(cf.~\cite[p.~104]{GoSt}). The surface $F$ admits a handle decomposition into
a single $0$-handle $\sh^{(2,0)}$, $1$-handles 
$\sh^{(2,1)}_1,\dots,\sh^{(2,1)}_{2g(F)}$ and a single $2$-handle $\sh^{(2,2)}$. 
Crossing a $2$-dimensional $k$-handle $\sh^{(2,k)}$ with the interval 
$[-1,1]$ it transforms into a $3$-dimensional $k$-handle $\shb^{(3,k)}$. Hence, 
we obtain a handle decomposition $\dom_1$ of the tubular neighborhood $\nu F$. 
Observe, that the $2$-handle $\shb^{(3,2)}$ comes from $h^{(2,2)}$ which was used 
to cap-off the boundary of
\begin{equation}
 \sh^{(2,0)}
 \cupb
 \sh^{(2,1)}_1
 \cupb
 \dots
 \cupb
 \sh^{(2,1)}_{2g(F)}.
 \label{surfhand}
\end{equation}
By isotopies of the attaching spheres of the $1$-handles we may think the handle 
$\shb^{(3,2)}$ to be attached after the handles 
$\sh^{(3,1)}_1,\dots,\sh^{(3,1)}_l$. Thus, the pair $(\dom_1,\dom_2)$ induces a 
Heegaard decomposition, where the handlebody $H_0$ is given by the union of 
$\shb^{(3,0)}$ with the handles 
\[
\shb^{(3,1)},
\dots,
\shb^{(3,1)}_{2g(F)},
\sh^{(3,1)}_1,\dots,\sh^{(3,1)}_l.
\]
Denote by $\bfat_1$ the attaching sphere of $\shb^{(3,2)}$. The curve $\beta_1$ 
bounds a surface $S$, diffeomorphic to 
$F\backslash D^2$, in the Heegaard surface $\Sigma$, since the surface $S$ admits a handle decomposition given in 
(\ref{surfhand}). The surface $F$ cannot separate and, thus, there has to be a $1$-handle, $\sh^{(3,1)}_{2g+1}$-say, that connects $S$ with 
$\Sigma\backslash D$. Using 
isotopies of the attaching spheres and handle slides of the $1$-handles we again 
may assume that this is the only $1$-handle with this property. The attaching 
sphere, $\afat_{2g+1}$, and $\bfat_1$ bound a surface whose associated homology element, 
given by capping it off with the core discs given by $\afat_{2g+1}$ and $\bfat_1$, 
equals $[F]$. The associated Heegaard diagram is $F$-adapted. 
To bring in the knot $K$ we have to 
cover two cases. If $K$ and $F$ are disjoint, we may replace $\dom_1$ by $\dom'_1$ 
which is a handle decomposition of $\nu F$ 
extended to $\nu K$ by an additional $0$-handle and $1$-handle. Denote by $\dom_2$ a handle decomposition of $Y\backslash\bigl(\nu F\cup\nu K\bigr)$ relative to 
$\partial\bigl(\nu F\cup\nu K\bigr)$. Observe, that since
$\nu F\cup\nu K$ is disconnected there has to be a $1$-handle connecting these.
Without loss of generality we may think this $1$-handle to connect the $0$-handles 
in $\dom'_1$. Thus, the pair 
$(\dom'_1,\dom_2)$ again induces a Heegaard decomposition by the same reasoning as 
above. The resulting Heegaard diagram is $F$-adapted and $K$-adapted. 
In case $K$ and $F$ intersect in a unique point we have to be a bit more 
careful: Denote by $\dom_1$ a handle decomposition of $\nu F$, with handles denoted 
like above, such that $\shb^{(3,0)}$ equals $\nu F\cap\nu K$. With this arrangement 
we can extend $\dom_1$ to a decomposition $\dom'_1$ of $\nu K\cup \nu F$ by adding 
a $1$-handle
$\sh^{(3,1)}_*$ given by $\nu K\backslash\shb^{(3,0)}$. Denote by $\dom_2$ a handle 
decomposition of $Y\backslash\bigl(\nu K\cup\nu F\bigr)$ relative to 
$\partial\bigl(\nu K\cup\nu F)$ with the handles denoted the same way as above. 
The $2$-handle $\shb^{(3,2)}$ may be thought of as being attached after the 
$1$-handles in the decomposition $\dom_2$. So, the pair $(\dom'_1,\dom_2)$ 
induces a Heegaard decomposition. We rename the handle $\sh^{(3,1)}_*$
to $\sh^{(3,1)}_{2g+1}$. The resulting Heegaard diagram is $F$-adapted and $K$-adapted.
\end{proof}

Before we prove 
Theorem~\ref{myadjuncineq} we have to recall how to recover an oriented knot from 
a doubly-pointed Heegaard diagram $(\Sigma,\afat,\bfat,w,z)$. Here, it is 
opportune to look into the Morse-theoretic picture. We obtain a Heegaard diagram 
from a self-indexing Morse-function $f\co Y\lra\R$. If the associated Heegaard 
decomposition is $K$-adapted, then $K$ is isotopic to the union of two flow lines 
connecting the index-$0$ critical point with the index-$3$ critical point. The 
Heegaard surface $\Sigma=f^{-1}(3/2)$ is oriented such that every flow line 
intersects $\Sigma$ positively. The two flow lines determining $K$ intersect 
$\Sigma$ in two points $z$ and $w$. Denote by $\gamma_z$ and $\gamma_w$ the 
respective flow lines, then $K$ is isotopic to $\gamma_z-\gamma_w$. Thus, the knot 
$K$ when isotoped into $\Sigma$ runs from $z$ to $w$ in the complement of the 
$\bfat$-curves and from $w$ to $z$ in the complement of the $\afat$-curves.
\begin{proof}[Proof of Theorem~\ref{myadjuncineq}] Suppose we have chosen 
an $F$-adapted and $K$-adapted Heegaard diagram
$(\Sigma,\afat,\bfat)$ with base points $w$ and $z$.  Given that the knot
Floer homology $\hfkoc(Y,K;\fraks)$ is non-zero, there has to be an 
intersection point $\xfat\in\talpha\cap\tbeta$ such that $\fraks_z(\xfat)=\fraks$.
From the proof of Proposition~\ref{mydiagram} we know that the handle determing the 
tubular neighborhood of $K$ bounds the periodic domain $\mathcal{P}$ 
(cf.~Lemma~\ref{oszahelp}) in case $\#(K\cap F)=1$ and they are disjoint otherwise. 
Thus, either both $w$ and $z$ lie outside of $\mathcal{P}$ or, depending on the 
orientation of $K$, one of them lies inside. Observe, that both lie outside of $\mathcal{P}$ in case $F$ and $K$ are disjoint. This means, that there is also a $(-F)$-adapted and $K$-adapted Heegaard diagram with both $w$ and $z$ lying outside of the associated periodic domain. Thus, in this case, we may apply Lemma~\ref{chernclassformula} to both get
\begin{eqnarray*}
2-2g(F)&\leq&\bigl<c_1(\fraks),[F]\bigr>\\
 2-2g(F)&\leq&\bigl<c_1(\fraks),[-F]\bigr>=-\bigl<c_1(\fraks),[F]\bigr>
\end{eqnarray*}
and, consequently,
\[
 \bigl|\bigl<c_1(\fraks),[F]\bigr>\bigr|\leq 2g(F)-2.
\]
In case $K$ and $F$ intersect, either $z$ or $w$ lie inside of $\mathcal{P}$. 
To use the 
Morse theoretic picture, recall that $K$ is isotopic to the knot 
$\gamma_z-\gamma_w$ where $\gamma_z$ and $\gamma_w$ are the gradient flow lines 
through $z$ and $w$ of the Morse function defining the Heegaard splitting. Thus, 
$K$ intersects the Heegaard surface in $z$, positively, and in $w$, negatively. 
Since $w$ should lie in $\mathcal{P}$ and the orientation of the Heegaard surface 
and of $F$ coincide, we see that $K$ has to intersect $F$, negatively. With this in 
place, we may apply Lemma~\ref{chernclassformula} to get the desired inequality 
$(i)$. 
\end{proof}
\begin{figure}[t]
\labellist\small\hair 2pt
\endlabellist
\centering
\includegraphics[width=3.5cm]{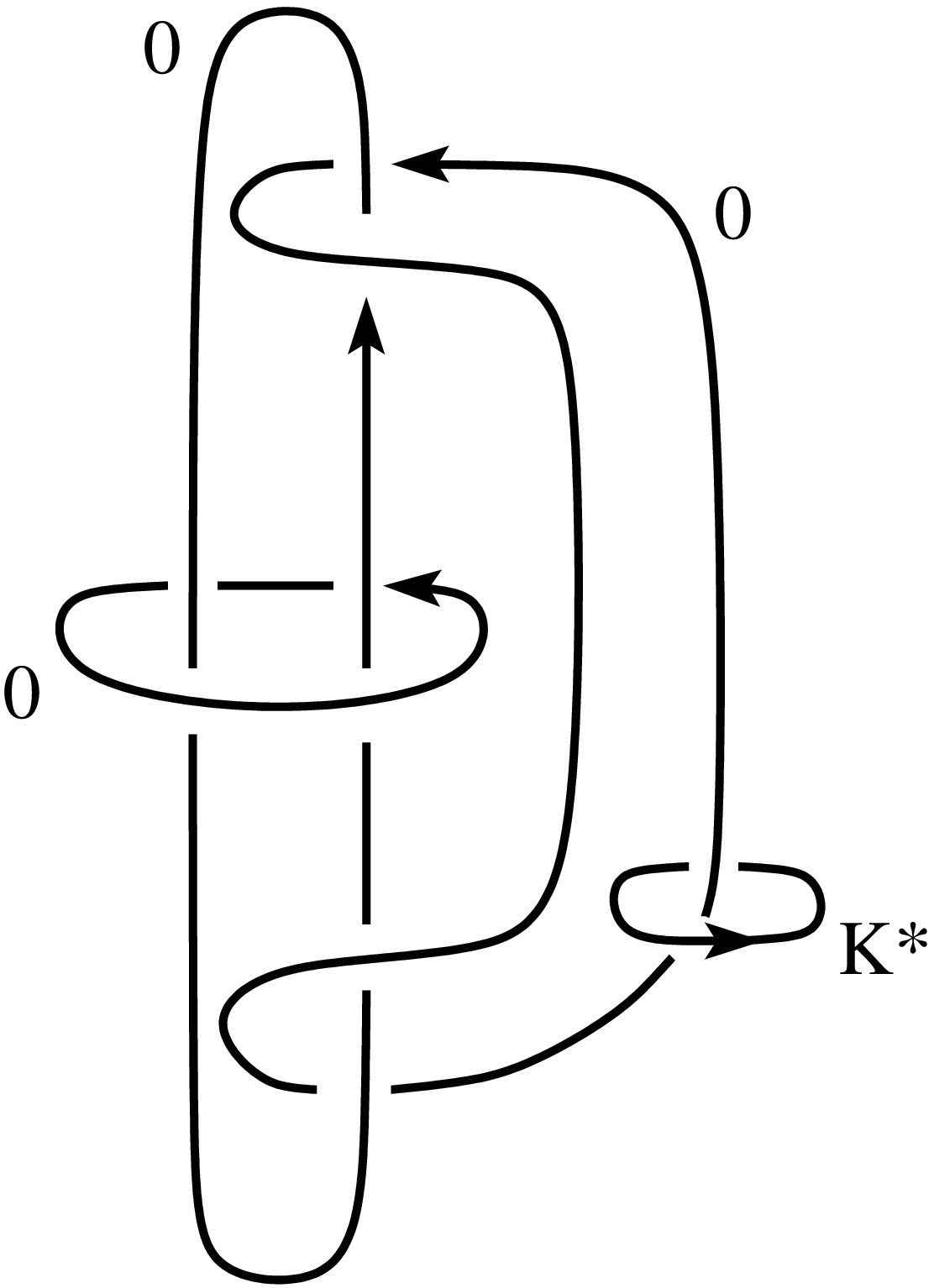}
\caption{The manifold $T^3$ with positively oriented fiber $K^*$.}
\label{Fig:tthreeorient}
\end{figure}
Before delving into the proof of Theorem~\ref{mymapadjunc} we would like 
to point the reader to 
\cite[Proposition~2.1]{Stip02} and its proof.
Lisca and Stipsicz prove an adjunction inequality for cobordism maps 
$\Fhat_{K,\fraks}$
in case $b_1(Y)=0$. Given that $[F]^2=n$ they blow-up the cobordism $W$ at $n$ 
distinct points in the complement of $F$. Denote by $\What$ the new cobordism. 
Lisca and Stipsicz split up the cobordism into 
$\What_1$ and $\What_2$ where $\What=\What_1\cup_\partial\What_2$. By the 
composition law of cobordism maps, a homological computation and the 
adjunction inequalities for the Heegaard Floer groups they show that the 
map $\Fhat_{\What}$ factorizes through a trivial Heegaard Floer group. For 
the homological computation they need the surjectivity of the morphism
\[
 \iota_*
 \co
 H_2(\What;\Z)
 \lra
 H_2(\What,Y;\Z)
\]
which is a consequence of $b_1(Y)=0$. However, by the Mayer-Vietoris sequence
\[
 \xymatrix@C=2pc@R=0.1pc
 {
 \dots\ar[r]
 &
 H_1(S^1)\ar[r]^{\,\hspace{-1.5cm}\iota_1^*\oplus\iota_2^*}
 & 
 H_1(Y\times[0,1])
 \oplus
 H_1(\sh^{(4,2)})\ar[r]
 &
 H_1(W)\ar[r]
 &
 0\\
 &
 [S^1]\ar@{|->}[r]
 & 
 \,\hspace{0.2cm}[K]\oplus 0\\
 }
\]
we see that surjectivity of $\iota_*$ already follows by assuming 
that $K$ is homologically trivial by comparing with the long exact sequence
of the pair $(W,Y)$. Thus, we may follow the lines of their 
proof but impose the relaxed condition that $[K]=0$ instead 
of $b_1(Y)=0$. Furthermore, in the proof of \cite[Proposition 2.1]{Stip02} 
Lisca and Stipsicz use three properties of cobordism maps: The blow-up 
formula (see~\cite[Theorem~3.7]{OsZa03}), conjugation invariance 
(see~\cite[Theorem~3.6]{OsZa03}) and the composition 
law (see~\cite[Theorem~3.4]{OsZa03}). Our proof
of the adjunction inequality will consist of showing that for cobordism maps in 
knot Floer homologies there exists a blow-up formula and a composition law and 
finally, to see that we can split the knot 
cobordism $\What$ (see~\cite[Definition~8.1]{Saha04}) into two knot 
cobordisms $\What_1$ and $\What_2$.
\begin{proof}[Proof of Theorem~\ref{mymapadjunc}] The conjugation invariance is 
given as part of Proposition~\ref{knotconjug}.\vspace{0.3cm}\\
{\bf Composition law.} The composition law essentially requires the 
associativity of the pair of pants pairing $\foc_{\afat,\bfat\gfat}$ 
defined by counting holomorphic triangles (cf.~\cite[\S8.3]{Saha04}). The 
necessary statement can be easily 
derived from the corresponding associativity statement about 
$\fhat_{\afat,\bfat\gfat}$ by 
applying \cite[Theorem~1.2]{Saha04} and \cite[Theorem~1.3]{Saha04} with 
the techniques provided in that paper (see~the proof of 
\cite[Theorem~8.7]{Saha04} or \cite[Example~4.1]{Saha04}).\vspace{0.3cm}\\
{\bf Blow-up formula.} The proof Ozsv\'{a}th and Szab\'{o} give carries 
over verbatim to our case with the following assumption imposed: given 
a knot cobordism $W=([0,1]\times Y,\phi)$ (cf.~\cite[Definition~8.1]{Saha04}) we 
blow-up in the complement of the image of $\phi$. We see that for each $\spinc$-structure $\frakt$ over
$Y$ the corresponding map
\[
 \Foc_{W;\fraks}
 \co
 \hfkoc(Y,\frakt)
 \lra
 \hfkoc(Y,\frakt)
\]
is the identity for $\left.\fraks\right|_{\{0\}\times Y}=\frakt$
with $\left<c_1(\fraks),E\right>=\pm1$ and it 
vanishes, otherwise.\vspace{0.3cm}\\
{\bf Splitting up knot cobordisms.} We blow up the cobordism $W$ at $n$ distinct
points in the complement of the surface $F$ and denote the resulting cobordism by
$\What$. We take the surface $F$ and pipe them to $n$ exceptional spheres as done
in Lisca and Stipsicz's work to obtain a surface $\Fhat$. We, further, choose a 
properly embedded arc $\gamma$ connecting $Y$ with $F$ which is disjoint from $Y$ 
and $F$ away from its endpoints. A regular neighborhood 
of $Y\cup\gamma\cup\Sigma$ is
denoted by $\What_1$ and the closure of the complement denoted by $\What_2$. So 
far, this is the construction of Lisca and Stipsicz, used in their proof 
of \cite[Proposition~2.1]{Stip02}. In our case, we make a special choice 
for $\gamma$: If $[0,1]\times K$ is disjoint from $F$, we choose $\gamma$ 
to be disjoint from $[0,1]\times K$. If $[0,1]\times K$ and $F$ 
intersect once, at the point $(t,s)$ say, we set $\gamma=[0,t]\times\{s\}$.
Then, for an appropriate regular neighborhood of $Y\cup\gamma\cup F$ the
intersection of $[0,1]\times K$ with $\partial\What_1$ will be a homologically 
essential embedded knot in $[0,1]\times K$. Hence, the embedded cylinder 
$[0,1]\times K$ splits into two cylinders, one embedded in $\What_1$ and 
one embedded in $\What_2$. We have that $\partial\What_1=Y\#(\sone\times\Fhat)$ 
and we obtain a knot $K'$ in this manifold that intersects $\Fhat$ once, 
negatively.\vspace{0.3cm}\\
With these remarks done, we follow the lines of the proof 
of \cite[Proposition 2.1]{Stip02}.
\end{proof}

\section{Symmetries}\label{symmetries}
Symmetries for knot Floer homologies with $[K]=0$ were discussed by Ozsv\'{a}th and
Szab\'{o} in \cite{OsZa04}. Here, we provide a discussion, which especially covers
the case $[K]\not=0$.
\begin{proof}[Proof of Proposition~\ref{mysymmetry}] Given a pair $(Y,K)$ we choose a doubly-pointed Heegaard diagram
$\mh_1=(\Sigma,\afat,\bfat,w,z)$ which is adapted to $K$ and which is 
extremely-weakly admissible (see \cite{Saha}). By swapping the points $z$ and 
$w$ we reverse the orientation of the associated knot, i.e.~the Heegaard 
diagram $\mh_2=(\Sigma,\afat,\bfat,z,w)$ represents the pair $(Y,-K)$.
The chain groups $\cfkhat(\mh_1)$ and $\cfkhat(\mh_2)$ are canonically isomorphic
as both are generated by the same set, i.e.~both are generated by $\talpha\cap\tbeta$.
Let 
\[
 \pointswap
 \co
 \cfhat(\mh_1)
 \lra
 \cfhat(\mh_2)
\]
be this canonical isomorphism. Denote by $\parhat^{\mh_i}$, $i=1,2$ the knot Floer
differential associated to the diagrams $\mh_i$. Both differentials count holomorphic 
discs $\phi$ with boundary conditions in $\talpha$ and $\tbeta$ such that $n_z(\phi)=n_w(\phi)=0$.
Consequently, the equality
\[
  \parhat^{\mh_1}=\parhat^{\mh_2}
\]
holds. Hence, $\pointswap$ induces an isomorphism between the associated knot Floer homologies
we will also denote by
\[
 \pointswap
 \co
 \hfkhat(Y,K;\fraks)
 \overset{\cong}{\lra}
 \hfkhat(Y,-K;\frakt_\fraks)
\]
where $\fraks,\frakt_\fraks\in\spinc(Y)$ are suitable pairs of $\spinc$-structures. To be precise,
given an intersection point 
$\xfat\in\talpha\cap\tbeta$ with $\fraks=\fraks_z(\xfat)$, then
$\frakt_\fraks$ is the $\spinc$-structure associated to $\pointswap(\xfat)$. Recall, that $\pointswap(\xfat)=\xfat$,
but now $\xfat$ has to be interpreted as a generator of $\cfkhat(\mh_2)$. Since we obtained $\mh_2$
from $\mh_1$ by swapping the points $z$ and $w$ we have that
\[
 \frakt_\fraks=\fraks_w(\xfat).
\]
Applying the formula given in \cite[Lemma 2.19]{OsZa01} (or looking at the end of its proof) we see that
\[
 \fraks_z(\xfat)
 -
 \fraks_w(\xfat)
 =-PD[\gamma_z-\gamma_w].
\]
By definition, $[K]=[\gamma_z-\gamma_w]$ and, hence, $\frakt_\fraks=\fraks+PD[K]$.
\end{proof}
\noindent On the $\spinc$-structures of a manifold $Y$ there is an operation 
called {\bf conjugation} given by the following algorithm: Let $[X]$ be a 
homology class of unit-length vector fields on $Y$ (i.e.~a $\spinc$-structure) and 
let $X$ be a representative. Then we define a map 
\[
 \mathcal{J}
 \co
 \spinc(Y)\lra\spinc(Y)
\]
by sending $[X]$ to $[-X]$. The knot Floer homologies fulfill a conjugation 
symmetry. 
\begin{prop}[see~Proposition~3.9~of~\cite{OsZa04}]\label{conjuginvar} Let $Y$ be a closed, oriented $3$-manifold and $K$ a knot. Then
we have that
\[
 \hfkoc(Y,K;\fraks)
 \cong
 \hfkoc(Y,-K;\mJ(\fraks)).
\]
We denote by $\mJ_Y$ the associated isomorphism.
\end{prop}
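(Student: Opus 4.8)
The plan is to run Ozsv\'ath--Szab\'o's conjugation-symmetry argument for closed $3$-manifolds in the doubly-pointed setting, in the same spirit as the proof of Proposition~\ref{mysymmetry} above. Fix a doubly-pointed Heegaard diagram $\mh=(\Sigma,\afat,\bfat,w,z)$ adapted to $K$ and admissible for $\hfkoc$, and form the \emph{conjugate diagram} $\overline{\mh}=(-\Sigma,\bfat,\afat,w,z)$. I would prove two things: first, that $\overline{\mh}$ is an admissible doubly-pointed diagram representing the pair $(Y,-K)$; and second, that there is a tautological chain isomorphism $\cfkoc(\mh)\to\cfkoc(\overline{\mh})$ over $\ztwo[U]$ under which the $\spinc$-refinement $\fraks_z$ is transformed by conjugation. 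Passing to homology then produces the asserted isomorphism $\mJ_Y\co\hfkoc(Y,K;\fraks)\to\hfkoc(Y,-K;\mJ(\fraks))$ simultaneously for all three flavors.

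For the first point I would argue Morse-theoretically, exactly as in the paragraph preceding the proof of Theorem~\ref{myadjuncineq}. If $\mh$ comes from a self-indexing Morse function $f\co Y\to[0,3]$, then $3-f$ is again a self-indexing Morse function on the \emph{same} manifold $Y$: its index-$i$ critical points are the index-$(3-i)$ critical points of $f$, its Heegaard surface is $f^{-1}(3/2)$ with the orientation induced by $-\nabla f$, i.e.\ $-\Sigma$, and its $\afat$- and $\bfat$-curves are the old $\bfat$- and $\afat$-curves, respectively. The two flow lines through $w$ and $z$ that cut out $K$ have their orientations reversed, so the knot they now determine is $\gamma_z-\gamma_w$ read backwards, namely $-K$, while $w$ and $z$ retain their roles as the $w$- and $z$-basepoints. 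Admissibility transfers for free: conjugating a diagram negates every periodic domain, and the (weak) admissibility condition is symmetric under negation of multiplicities.

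For the second point, the generators are literally the same set, since $\talpha\cap\tbeta=\tbeta\cap\talpha$. Holomorphic Whitney disks correspond under $\phi\mapsto\overline{\phi}$, obtained by reflecting the source disk across the imaginary axis and complex-conjugating the almost complex structure on the symmetric product; this interchanges the $\afat$- and $\bfat$-boundary conditions in exactly the way the relabeling demands, fixes the endpoints $\pm i$, and preserves the Maslov index together with both multiplicities $n_z$ and $n_w$. Hence the two boundary operators agree term by term, including $U$-powers, so the tautological map is a chain isomorphism respecting the $U$-action. That $\fraks_z$ is carried to its conjugate is precisely Ozsv\'ath and Szab\'o's conjugation symmetry for the single-pointed diagrams $(\Sigma,\afat,\bfat,z)$ and $(-\Sigma,\bfat,\afat,z)$ (equivalently, replacing $f$ by $3-f$ negates the gradient vector field from which $\fraks_z$ is constructed). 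Restricting the chain isomorphism to the summand generated by points $\xfat$ with $\fraks_z(\xfat)=\fraks$ therefore identifies $\hfkoc(Y,K;\fraks)$ with $\hfkoc(Y,-K;\mJ(\fraks))$, and we let $\mJ_Y$ be the induced map.

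The holomorphic-curve bookkeeping is the usual conjugation trick and is routine. The step demanding the most care is the orientation accounting: one must check that the reversed flow lines genuinely produce $-K$ rather than $K$, and that the induced operation on $\spinc(Y)$ is honest conjugation rather than a conjugate-plus-translate. A minor technical point is that the conjugate of a $K$-adapted diagram need not itself be adapted to $-K$; I would deal with this by invoking invariance of $\hfkoc$ under the choice of admissible doubly-pointed diagram, or alternatively by handle-sliding $\overline{\mh}$ to a $(-K)$-adapted diagram before comparing.
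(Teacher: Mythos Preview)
Your proposal is correct and is precisely the Ozsv\'ath--Szab\'o conjugation argument that the paper invokes; the paper's own proof is a one-line deferral to \cite[Proposition~3.9]{OsZa04}, and you have simply supplied the details of that argument in the doubly-pointed setting. Your care about orientations and about adaptedness of the conjugate diagram is appropriate, though neither issue causes trouble.
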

\begin{proof} The proof goes exactly as in the case of homologically trivial knots
given by Ozsv\'{a}th and Szab\'{o}.
\end{proof}
\begin{proof}[Proof of Corollary~\ref{mysymmetry}]
The isomorphism is given by
\[
  \mN_{(Y,K);\fraks}=\mJ_{Y;\fraks+PD[K]}\circ\pointswap_\fraks
\] or equivalently by $\pointswap_\fraks\circ\mJ_{Y;\fraks}$.
\end{proof} 
We call the isomorphism $\mN_{(Y,K);\fraks}$ the {\bf knot conjugation isomorphism}.
These morphism allow us to prove that maps induced by cobordisms are
symmetric with respect to this knot conjugation.
\begin{proof}[Proof of Proposition~\ref{knotconjug}] We will prove this 
for cobordisms coming from $2$-handle attachments. The discussion for 
$1$-handles and $3$-handles can done without difficulty. 
Suppose $W$ is given by attaching a $2$-handle along a knot $K'$ in 
the complement of $K$. We define the map $\Foo_{W,\fraks}$ 
(see~\cite{Saha04}) in the following way: Let $(\Sigma,\afat,\bfat,w,z)$ be 
a Heegaard diagram which is $K$-adapted and $K'$-adapted. The $2$-handle 
attachment along $K'$ corresponds to a surgery along $K'$. Performing 
this surgery, we obtain a third set of attaching circles $\gfat$. The
doubly-pointed Heegaard triple diagram $(\Sigma,\afat,\bfat,\gfat,w,z)$
determines the cobordism $W$. The map $\cfoo_{\afat,\bfat\gfat}$ is 
defined by counting holomorphic triangles in this diagram and the induced
cobordism map (in homology) is denoted by $\Foo_{W}$. 
Knot conjugation is given as a composition of ordinary conjugation 
$\mJ$ and the point-swap isomorphism $\pointswap$. Both isomorphisms, on 
the chain level, are the identity. We have to compare the following two 
maps
\[
 \begin{array}{rccc}
  \cfoo_{\afat,\bfat\gfat}(\,\cdot\,,\zfat^+_{\bfat\gfat})
  \co&
  \cfkhat(\Sigma,\afat,\bfat,w,z)&
  \lra&
  \cfkhat(\Sigma,\afat,\gfat,w,z)\\
  \cfoo_{\bfat\gfat,\afat}(\,\cdot\,,\zfat^+_{\gfat\bfat})\co&
  \cfkhat(-\Sigma,\bfat,\afat,z,w)&
  \lra&
  \cfkhat(-\Sigma,\gfat,\afat,z,w)
 \end{array}
\]
we will first describe in the following: The Heegaard diagram 
$(\Sigma,\bfat,\gfat,w,z)$ is an adapted Heegaard diagram for the 
pair $(\sthree\#^{g-1}(\stwo\times\sone),U)$ where $U$ is the unknot. There 
is a subgroup of the associated Heegaard Floer groups we can call the 
top-dimensional homology (cf.~\cite{OsZa01} and \cite{OsZa02}). Let 
$\hattheta^+_{\bfat\gfat}$ be one of its generators. Recall that there is 
an intersection point 
$\zfat_{\bfat\gfat}^+\in\tbeta\cap\tgamma$ such that 
$[\zfat_{\bfat\gfat}^+]$ is this generator. Suppose we are 
given $\xfat\in\talpha\cap\tbeta$, $\yfat\in\talpha\cap\tgamma$ and 
a path $\mJ_s$ of admissible almost complex 
structures. Denote by $\mM^{\mJ_s}_{(\afat,\bfat,\gfat)}(\xfat,\zfat,\yfat)$ 
the moduli space of $\mJ_s$-holomorphic Whitney triangles connecting 
$\xfat$ with $\yfat$ through $\zfat$ with boundary conditions in 
$\talpha$, $\tbeta$ and $\tgamma$ with Maslox index $0$. The first map 
is defined via
\[
 \left.\cfoo_{\afat,\bfat\gfat}(\xfat,\zfat^+_{\bfat\gfat})\right|_\yfat
 =
 \#\mM^{\mJ_s}_{(\afat,\bfat,\gfat)}(\xfat,\zfat^+_{\bfat\gfat},\yfat).
\]
In a similar fashion the second map is defined. Observe, that using 
conjugation we have an identification
\[
  \mM^{\mJ_s}_{(\afat,\bfat,\gfat)}(\xfat,\zfat^+_{\bfat\gfat},\yfat)
  \cong
  \mM^{-\mJ_s}_{(\gfat,\bfat,\afat)}(\xfat,\zfat^+_{\gfat\bfat},\yfat).
\]
The intersection point $\zfat^+_{\beta\gamma}$ can be 
interpreted as sitting in $(-\Sigma,\gfat,\bfat)$ and, due to the 
orientation change of the surface $\Sigma$, the point $\zfat^+_{\beta\gamma}$ 
is a representative of $\hattheta^+_{\gfat\bfat}$. Thus, in the new diagram 
we can interpret the point $\zfat^+_{\bfat\gfat}$ as $\zfat^+_{\gfat\bfat}$. 
Hence, we have
\begin{eqnarray*}
  \left.\cfoo_{\afat,\bfat\gfat}(\xfat,\zfat^+_{\bfat\gfat})\right|_\yfat
  &=&
  \#\mM^{\mJ_s}_{(\afat,\bfat,\gfat)}(\xfat,\zfat^+_{\bfat\gfat},\yfat)
  =
  \#\mM^{-\mJ_s}_{(\gfat,\bfat,\afat)}(\xfat,\zfat^+_{\gfat\bfat},\yfat)\\
  &=&
  \left.\cfoo_{\bfat\gfat,\afat}(\xfat,\zfat^+_{\gfat\bfat})\right|_\yfat.
\end{eqnarray*}
We have shown that on homology we have
\begin{equation}
 \Foo_{W}=\mN_{(Y',K')}\circ\Foo_{W}\circ\mN_{(Y,K)}.
 \label{eq:refine}
\end{equation}
To prove the refined statement given in the proposition, we have to see that
there is a map
\[
 \mN\co\spinc(W)\lra\spinc(W)
\]
that refines the equation $(\ref{eq:refine})$. Observe, that knot conjugation is
a combination of point-swap symmetry and conjugation symmetry, the former swapping
the base points and the latter swapping the roles of $\afat$ and $\bfat$ and altering the surface orientation. From the considerations we have given it is 
easy to derive that
\[
 \Foc_{W,\fraks}
 =
 \mJ_{Y'}\circ\Foc_{W,\mJ(\fraks)}\circ\mJ_{Y}
\]
(cf.~\cite{OsZa03}). To define $\mN$, we first have the see in what way swapping 
base points acts on $\spinc$-structures. Define $\mS_z\subset\spinc(W)$ to be 
the subset of $\spinc$-structures realized by homotopy classes of Whitney 
discs in the Heegaard triple diagram $(\Sigma,\afat,\bfat,\gfat,w,z)$ and, correspondingly, define $\mS_w$ as the subset of 
$\spinc$-structures realized by homotopy classes of Whitney triangles in 
the Heegaard triple diagram $(\Sigma,\afat,\bfat,\gfat,z,w)$. Observe, that 
our considerations already show that swapping base points induces a map
\[
 \eta\co\mS_z\lra\mS_w
\]
such that
\[
 \Foo_{W,\fraks}
 =
 \pointswap_{Y'}\circ\Foo_{W,\eta(\fraks)}\circ\pointswap_{Y}.
\]
However, we would like to see that $\eta$ extends to a map on the set
$\spinc(W)$ by showing that $\fraks-\eta(\fraks)$ does not depend on $\fraks$. 
In fact, it is a shift with a constant class. Given a Withney triangle $\phi$,
Ozsv\'{a}th and Szab\'{o} in \cite[\S 8.1.4]{OsZa01} construct an associated $\spinc$-structure $\fraks_z(\phi)$. Performing their construction to determine
$\fraks_z(\phi)$ and $\fraks_w(\phi)$, we see that these two $\spinc$-structures
will differ only in a tubular neighborhood of $F_0^{z}\cup F_0^{w}$. We basically 
apply Ozsv\'{a}th and Szab\'{o}'s notation, however, we added the superscript-$z$
and $w$ to $F_0$ to indicate the base point used. The
construction of both $F_0^{z}$ and $F_0^w$ does not depend on the Whitney 
triangle $\phi$. Furthermore, the $\spinc$-structures $\fraks_z(\phi)$ will be
independent of $\phi$ in a small tubular 
neighborhood of $F_0^{z}$ and the same holds for $\fraks_w(\phi)$ 
and $F_0^{w}$. Since we are just looking for Whitney triangles
with both $n_z(\phi)=n_w(\phi)=0$, we will also have that $\fraks_z(\phi)$ will 
be independent of $\phi$ at $F_0^w$ and, vice versa, $\fraks_w(\phi)$ will 
be independent of $\phi$ at $F_0^z$. Thus, the difference class
\[
 \fraks_z(\phi)-\fraks_w(\phi)
\]
is a constant multiple of the the Poincar\'{e} dual of the 
class $[F_0^z\cup F_0^w]$ in $H_2(W,\partial W)$. Thus, there
is a homology class $c$ such that $\fraks-\eta(\fraks)=PD[c]$. 
So, $\eta$ can be extended to a map on $\spinc(W)$. Hence, defining
\[
 \mN\co\spinc(W)\lra\spinc(W)
\]
as the composition $\mJ\circ\eta$ we obtain the adequate refinement of equation
$(\ref{eq:refine})$.
\end{proof}
\begin{rem} With more effort it is possible to determine the shifting on
$\spinc$-structures of $W$ induced by point-swap symmetry. However, since
we do not need an explicit calculation for our purposes, we omitted this
step.
\end{rem}
\section{Implications to knot Floer homology}\label{app01}
Before we continue with applications of the derived symmetries, 
we would like to remind the reader of the interpretation of $\spinc$-structures 
as homology classes of vector fields (cf.~\cite{turaev}): A $\spinc$-structure 
$\fraks$ on a $3$-manifold is an equivalence class of unit-length vector fields 
where two vector fields are defined 
to be equivalent if they are homotopic outside of a ball. With this description we 
can characterize a $\spinc$-structure by the homology class of a link $L_\fraks$. 
Furthermore, the first chern class of the $\spinc$-structure and the link are 
related as follows:
\[
 c_1(\fraks)=PD[2\cdot L_\fraks].
\]
Thus, given a surface $F$ inside a $3$-manifold, the quantity 
$\bigl<c_1(\fraks),[F]\bigr>$ equals twice the intersection number of $L_\fraks$ 
with $F$, i.e.~
\begin{equation}
 \bigl<c_1(\fraks),[F]\bigr>
 =2\cdot\#(L_s,F).\label{eq:chernformula}
\end{equation}
With this at hand we are able to prove the statement of Theorem~\ref{mysym:kfh}.
\begin{proof}[Proof of Theorem~\ref{mysym:kfh}] This immediately follows from 
the fact that the knot conjugation
\[
 \mN\co\spinc(Y)\lra\spinc(Y)
\]
is a bijection without fixed points in case $[K]$ cannot be divided by two. 
If $\hfkhat(Y,K;\fraks)$ has odd rank, then the group 
$\hfkhat(Y,K;\mN(\fraks))$ has odd rank, too. Supposing that
 $\fraks=\mN(\fraks)=\mJ(\fraks)-PD[K]$ we derive that 
$PD[K]=2\fraks$ which contradicts the assumptions. So, the refined groups 
come in pairs which both have the same rank.
\end{proof}
Indeed, the result given in Theorem~\ref{mysym:kfh} underpins the difference 
between the case $[K]=0$ and $[K]\not=0$. 
\begin{proof}[Proof of Proposition~\ref{thm:van}] The proof combines the adjunction inequalities given in 
Theorem~\ref{myadjuncineq}, the point-swap symmetry given in 
Proposition~\ref{mysymmetry} and the knot conjugation symmetry given in
Proposition~\ref{knotconjug}.\vspace{0.3cm}\\
First we look at the different conditions posed in $(a)$ and $(b)$. The two
conditions 
$\#(L\cap\Sigma_S)$ is non-zero, odd and $lk(K,L)\not=\#(\Sigma\cap\Sigma_S)$
are equivalent to saying that $\Sigma_S$ and $L$ intersect non-trivial, in an
odd number of points and at least one of these intersections is negative. On the 
other hand the conditions $\#(L\cap\Sigma_S)$ is non-zero, odd and 
$lk(K,L)=\#(\Sigma\cap\Sigma_S)$ means that $L$ and $\Sigma_S$ intersect 
non-trivially, in an odd number of points and all of these intersections are 
positive. The second set of conditions in (a) and the second set of conditions in 
(b) can be covered by discussing $L$ and $\Sigma_S$, intersecting in an even number 
of points. Thus, there are basically three cases to cover we will discuss 
separately.\vspace{0.3cm}\\
First suppose $\#(\Sigma_S\cap L)=2k+1$ is odd and one of these intersection
points is negative. Let $\nu L$ be a tubular neighborhood of $L$. The boundary
torus $\partial\nu L$ intersects $\Sigma_S$ in $2k+1$ pairwise disjoint, embedded
circles. Denote by $x_1,\dots,x_{2k+1}$ the intersection points of $L$ and 
$\Sigma_S$ where the indices are chosen such that, starting at $x_1$, traversing
along $L$ in the direction given by its orientation, we will meet $\Sigma_S$ in 
order of the intersection points. Without loss of generality we may assume that 
$x_1$ is a negative intersection. Each of the intersection points $x_i$ determines 
a meridian $\mu_i$ in $\partial\nu L$. These meridians in turn determine $2k+1$ 
cylinders $C_1,\dots,C_{2k+1}$ in $\partial\nu L$, where $\mu_j$ and $\mu_{j+1}$ 
should bound $C_j$. For each $j=1,\dots,k$, we remove from $\Sigma_S$ the disks
around $x_{2j}$ and $x_{2j+1}$ in which $\nu L$ and $\Sigma_S$ meet and glue in 
the cylinder $C_{2j}$. In this way we obtain an oriented, embedded surface
$\Sigma$ which meets $L$ negatively in one single intersection point. Observe, that
we obtained $\Sigma$ from the Seifert surface by adding $k$ one handles. Thus, the
group $\hfkhat(\sthree_0(K),L;\fraks)$ vanishes for $\spinc$-structures $\fraks$ which fulfill the following inequality.
\begin{equation}
  -\bigl<c_1(\fraks),[\Sigma]\bigr>
  >
  2g(\Sigma)-2=2sg(K)+2k-2.
 \label{eq:adj01}
\end{equation}
Since the orientation of $\Sigma$ and $\Sigma_S$ agree, the $\spinc$-structure $\fraks=-\lambda\cdot PD[\mu]$ for $\lambda\in\Z^+$ will fulfill
\begin{equation}
 -\bigl<c_1(\fraks),[\Sigma]\bigr>=2\lambda.
 \label{eq:adj02}
\end{equation}
Hence, the groups vanish for $\lambda<-sg(K)-k+1$. On the other hand, we may
use the surface $-\Sigma$ to say that the groups $\hfkhat(\sthree_0(K),-L;\fraks)$
vanish if
\begin{equation}
-\bigl<c_1(\fraks),-[\Sigma]\bigr>
  >
  2g(\Sigma)-2=2sg(K)+2k-2.
 \label{eq:adj03}
\end{equation}
In this case, for $\fraks=\epsilon\cdot PD[\mu]$ with $\epsilon\in\Z^+$ we have that
\begin{equation}
 -\bigl<c_1(\fraks),-[\Sigma]\bigr>
 =2\epsilon.
\label{eq:adj04}
\end{equation}
Thus, we have vanishing groups for $\epsilon>sg(K)+k-1$. According to the
point-swap symmetry given in Proposition~\ref{mysymmetry} we see that
\[
  \hfkhat(\sthree_0(K),L;\fraks-PD[L])
  \cong
  \hfkhat(\sthree_0(K),-L;\fraks).
\]
Observe, that $PD[L]=lk(K,L)\cdot PD[\mu]$. Thus, we see that 
$\hfkhat(\sthree_0(K),L;\fraks)$ vanish for
$\fraks>sg(K)+k-1-lk(K,L)$. Now, we just have to see that
\[
 k=
 \Bigl\lfloor
 \frac{\#(L\cap\Sigma_S)}{2}
 \Bigr\rfloor.
\]
Suppose that $\#(\Sigma\cap L)=2k+1$ is odd and all intersection points are 
positive. This case can be covered analogously as the first case. We, again, 
construct a surface $\Sigma$ as done above by $1$-handle additions such that
$L$ and $\Sigma$ intersect in a single point. In this case, to apply our adjunction 
inequality, we have to equip $\Sigma$ with the orientation from $-\Sigma_S$. We
then proceed as above: By the adjunction inequalities we know that
$\hfkhat(\sthree_0(K),L;\fraks)$ vanish for $\spinc$-structures $\fraks$ which 
fulfill the inequality $(\ref{eq:adj01})$: For $\fraks=\lambda\cdot PD[\mu]$ 
with $\lambda\in\Z^+$ (recall that $\Sigma$ carries the orientation from 
$-\Sigma_S$) we have that $(\ref{eq:adj02})$ is fulfilled. Hence, the groups
vanish for $\lambda>sg(K)+k-1$. Now, looking at $\hfkhat(\sthree_0(K),-L;\fraks)$
we see that these groups vanish for $\spinc$-structures $\fraks$ which fulfill
$(\ref{eq:adj03})$. So, for $\fraks=\epsilon\cdot PD[\mu]$ with $\epsilon\in\Z^-$ 
we have that $(\ref{eq:adj04})$ is fulfilled. This means that, by application of 
point-swap symmetry, the groups $\hfkhat(\sthree_0(K),L;\fraks)$ also vanish for 
$\fraks<-sg(K)+1-k-lk(K,L)$.\vspace{0.3cm}\\
The last case, when $L$ and $\Sigma_S$ intersect in an even number of points (or 
not at all) is the easiest case to cover. By the procedure of handle attachments
given at the beginning of the proof we may form a surface $\Sigma$ which is
disjoint from $L$, with its genus $g(\Sigma)=sg(K)+k$. Since $\Sigma$ and $L$ are 
disjoint, part $(b)$ of Theorem~\ref{myadjuncineq} applies, showing that the groups
$\hfkhat(\sthree_0(K),L;\fraks)$ vanish for $\spinc$-structures outside of the
set $I_1=[-sg(K)-k+1,sg(K)+k-1]$. The same holds for the groups 
$\hfkhat(\sthree_0(K),-L;\fraks)$. The latter, by point-swap symmetry, implies that
the groups $\hfkhat(\sthree_0(K),L;\fraks)$ vanish for $\fraks$ outside of
$I_2=[-sg(K)-k+1-lk(K,L),sg(K)+k-1-lk(K,L)]$. Thus, the groups vanish for $\fraks$ 
outside of the intersection $I_1\cap I_2$. In case the linking number of $K$ and 
$L$ is positive (or zero), $I_1\cap I_2$ equals the interval given in part $(a)$ 
and, in case the linking number is negative we obtain the interval in $(b)$.
\end{proof}
Suppose we are given a closed, oriented $3$-manifold $Y$ and a homologically
trivial knot $K$ in it. Further, denote by $\mu$ a meridian of $K$ and by
$\Fhat$ a closed surface in $Y_0(K)$ which is obtained by capping off a
Seifert surface of $F$ in $Y$ with the core of the surgery handle. Recall, that
the homology of the $0$-surgered manifold $Y_0(K)$ equals 
\[
 H^2(Y)\oplus\Z.
\]
The additional $\Z$-factor we obtain from the $0$-surgery is spanned by the
meridian $\mu$ and for a $\spinc$-structure $\fraks$ its value in 
this $\Z$ is given by the quantity
\[
 \bigl<c_1(\fraks),[\Fhat]\bigr>/2.
\]
With this preparation it is easy to see that the arguments presented in the 
proof of Proposition~\ref{thm:van} can be used to prove an analogue, but more general 
result, for $K\subset Y$ null-homologous and arbitrary $L\subset Y$ disjoint 
from $K$. As a special case of Proposition~\ref{thm:van} and knot conjugation symmetry we 
almost recover Theorem~1.1.~of Eftekhary's paper \cite{eftek}. 
Theorem~1.1.~states that the longitude Floer homology of
a knot in the $3$-sphere detects the Seifert genus of the knot in terms of
its filtration and that these groups fulfill some kind of conjugation symmetry. 
The theorem consists of four statements of whom we recover three.
\begin{proof}[Proof of Corollary~\ref{myres:lfh}] Given a knot $K$ in 
the $3$-sphere, Eftekhary's longitude Floer homology, by its definition, admits 
an identification
\begin{equation}
 \lfh(K,\fraks)=\hfkhat(\sthree_0(K),\mu;\fraks+1/2\cdot PD[\mu]).
\label{eq:ident}
\end{equation}
The $\spinc$-shift originates from the fact that Eftekhary filters the longitude 
Floer homology using the map
\[
\fraks=\frac{\fraks_z+\fraks_w}{2}
\]
instead of $\fraks_z$.
Since $\fraks_z=\fraks_w-PD[\mu]$ (see the discussion in the 
proof of Proposition~\ref{mysymmetry}), the identification follows. Given a
genus minimizing Seifert surface $\Sigma_S$ of $K$, this surface can be capped off
in $\sthree_0(K)$ with a disk. We obtain a closed surface $\Fhat$ and we have that
$\mu$ intersects $\Fhat$ in a single point, positively. So, part $(b)$ of
Proposition~\ref{thm:van} applies, telling us that the knot Floer homology vanishes
for $\spinc$-structures $\fraks$ outside of 
\[[-sg(K),sg(K)-1].\] With the
identification between the knot Floer and longitude Floer homology given in 
$(\ref{eq:ident})$, we see that the
latter vanishes for $\spinc$-structures outside of 
\[[-sg(K)+1/2,sg(K)-1/2].\] 
This concludes the first statement of the corollary. To prove the second statement,
recall that by knot conjugation symmetry we have an isomorphism
\[
 \mN\co
\hfkhat(\sthree_0(K),\mu;\fraks)
 \overset{\cong}{\lra}
 \hfkhat(\sthree_0(K),\mu;\fraks-1).
\]
Using the identification given in equation $(\ref{eq:ident})$ the statement follows.
\end{proof}
\begin{rem}\label{remark01} Recall that part $(i)$ of Theorem~\ref{myadjuncineq} 
gives an 
adjunction inequality without absolute values as given in part $(ii)$ of the
theorem. It is worth mentioning that absolute values in case of $(i)$ cannot
be achieved in general. The non-vanishing result part of Theorem~3.2.~of Eftekhary's
paper \cite{eftek} gives a contradiction. However, even without Eftekhary's result
we are able to see that $\hfkhat(\sthree_0(T),\mu)$ gives a counterexample. We will 
see that this group can be identified with $\hfkhat(T^3,K^*)$, where $K^*$ is a 
fiber of the fibration 
\[
  \sone\lra T^3\lra T^2.
\] 
Assuming that part $(i)$ of our theorem works with absolute values, this would 
mean that the former group (and hence the latter) is concentrated in the 
subgroup associated to the $\spinc$-structure with trivial first chern class. 
However, using the exact sequences given in \S\ref{computefoone} to 
compute the absolute $\Q$-gradings, this yields a contradiction to the fact 
that $T^3$ admits an orientation-reversing diffeomorphism that fixes the 
orientation of the knot $K^*$ (which implies a certain symmetry in 
these $\Q$-gradings).
\end{rem}

\section{Circle bundles over the sphere}\label{firstcalc}
We discuss the case of $\stwo\times\sone$ in this section as a first step in the
proof of Theorem~\ref{mythm:01}. To be more precise, starting from $\stwo\times\sone$ 
we will discuss all circle bundles whose base space is $\stwo$ by using surgeries
to change the Euler-number of the fibration.
\begin{definition}\label{fgedef} We define $\fge$ to be the $\sone$-bundle with 
Euler number $e$ and whose base space is a closed, oriented surface of genus $g$.
\end{definition}
\noindent We start reproving Proposition~\ref{oldres} (see also \cite[Proof of Theorem 7.4]{Saha}).
\begin{figure}[ht!]
\labellist\small\hair 2pt
\pinlabel {$\alpha$} [l] at 279 334
\pinlabel {$x$} [Bl] at 283 244
\pinlabel {$z$} [l] at 422 219
\pinlabel {$w$} [l] at 310 183
\pinlabel {$y$} [tl] at 284 121
\pinlabel {$\beta$} [r] at 146 44
\endlabellist
\centering
\includegraphics[width=6cm]{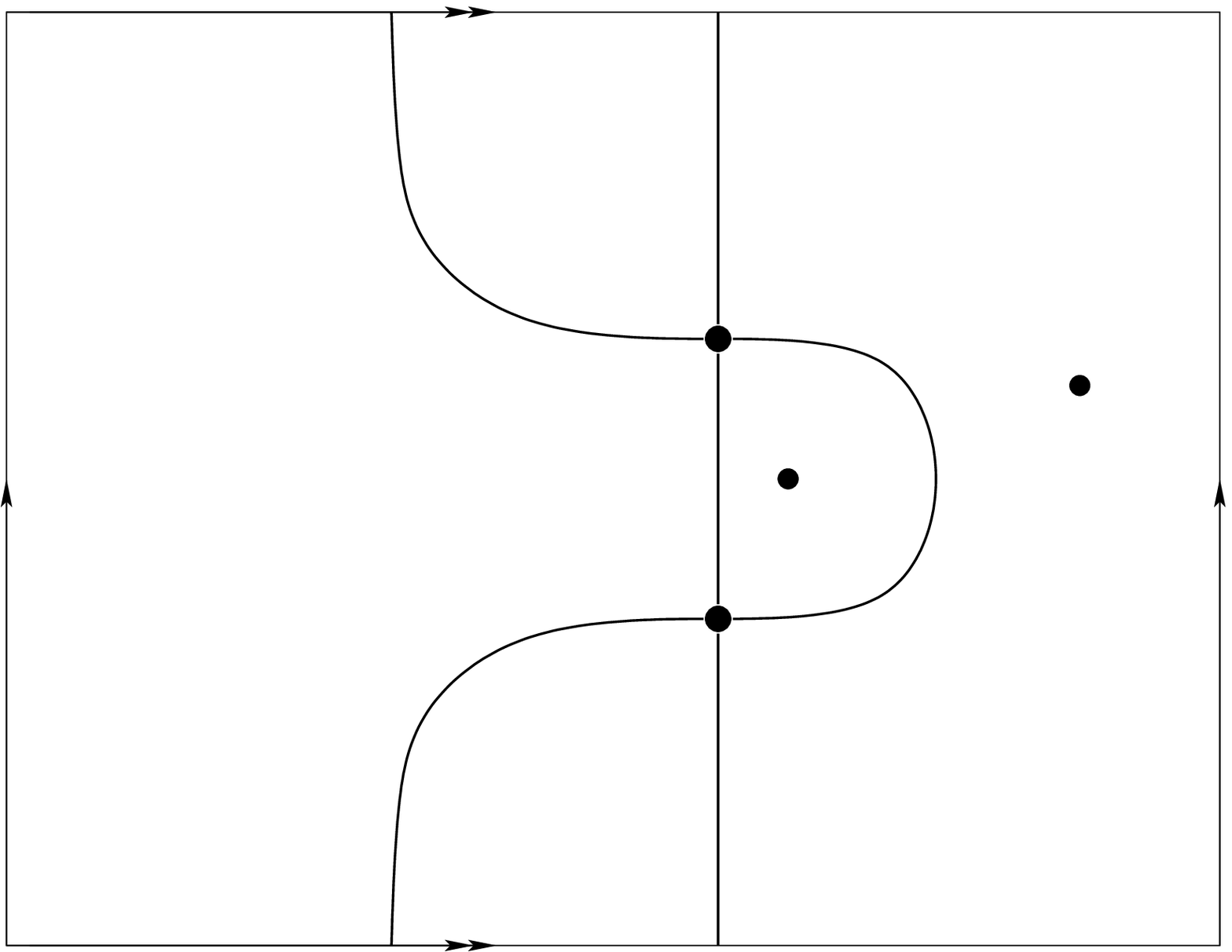}
\caption{Heegaard diagram adapted to $K^*$}
\label{Fig:modelknotfh}
\end{figure}

\begin{proof}[Proof of Proposition~\ref{oldres}] It is not hard to see 
that Figure \ref{Fig:modelknotfh} 
pictures a doubly-pointed Heegaard diagram $(T^2,\afat,\bfat,w,z)$
adapted to the pair $(\stwo\times\sone,K^*)$. The chain
complex $\cfkhat(\Sigma,\afat,\bfat)$ is generated by the intersection
points $\xfat$ and $\yfat$.  The boundary operator $\delhatw$ is given by:
\begin{eqnarray*}
  \delhatw \xfat
  &=&
  \yfat\\
  \delhatw \yfat
  &=&
  0.
\end{eqnarray*}
Thus, the result follows.
\end{proof}
For simplicity let us denote by $\fge$ the $\sone$-bundle over $\Sigma^g$ with 
Euler number $e$. Let $Y$ be a $\sone$-bundle over 
an oriented surface $\Sigma$. Choose a closed disk $D$ contained in a bundle chart. 
Then, by choosing sections $\sigma_1$ of $\left. Y\right|_D$ and $\sigma_2$ of
$\left. Y\right|_{\overline{\Sigma\backslash D}}$ we can define the intersection
number of $\sigma_1$ and $\sigma_2$ inside $\left. Y\right|_{\partial D}$. This
intersection number is the Euler number of the bundle (see~\cite{klaus}). Using this description it
is easy to see that starting with $\fgo$ it is possible
to change the Euler number of the bundle using certain surgeries along fibers:
Fix a fiber $\gamma$ in $\fgo$. This fiber projects to a point in $\Sigma$. Choose
a small disk around that point. Choose a constant section $\sigma_2$ over the
set $\overline{\Sigma\backslash D}$. This section restricted to 
$\left. Y\right|_{\partial D}\cong T^2$ is a meridian. A surgery along $\gamma$
with coefficient $-1/e$ provides the necessary modification. We obtain a
bundle with Euler number $e$.
\begin{prop}\label{stwosoneeuler} Denote by $\foe$ the $\sone$-bundle over $\stwo$ 
with Euler number $e$ and denote by $K^*$ a fiber.
Then $H_1(\foe)\cong\Z_{|e|}$ and for each $\spinc$-structure
$\fraks$ we have that $\hfkhat(\foe,K^*;\fraks)\cong\ztwo$.
\end{prop}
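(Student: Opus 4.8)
The plan is to proceed exactly as in the model case of Proposition~\ref{oldres}, but now for the bundle $\foe$ of arbitrary Euler number $e$, constructing an explicit doubly-pointed Heegaard diagram adapted to the fiber $K^*$ and reading off the knot Floer differential directly. First I would start from the genus-$1$ diagram $(T^2,\afat,\bfat,w,z)$ of Figure~\ref{Fig:modelknotfh} representing $(\stwo\times\sone,K^*)$, and recall from the discussion preceding the proposition that $\foe$ is obtained from $\stwo\times\sone=\foo$ by a $(-1/e)$-surgery along a fiber $\gamma$ disjoint from $K^*$. Since $K^*$ is disjoint from the surgery region, I would realize this surgery by stabilizing the Heegaard diagram: add a handle to $T^2$, introducing a new pair of curves $\alpha_2,\beta_2$ encoding the surgery slope, so that the resulting genus-$2$ diagram $(\Sigma_2,\{\alpha_1,\alpha_2\},\{\beta_1,\beta_2\},w,z)$ is adapted to $(\foe,K^*)$, with the base points $w,z$ still sitting in the original torus piece straddling $\beta_1=\beta$. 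The homological computation $H_1(\foe)\cong\Z_{|e|}$ is standard from the surgery description (or from the bundle's Gysin sequence), and in particular there are exactly $|e|$ $\spinc$-structures on $\foe$.

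Next I would compute the chain complex. The intersection points $\talpha\cap\tbeta$ of the stabilized diagram are of the form $\{x_i\}\times\{p_j\}$, where $x_i\in\alpha_1\cap\beta_1$ (there are two, the points $x$ and $y$ of Figure~\ref{Fig:modelknotfh}, or possibly more after an isotopy needed to achieve admissibility) and $p_j\in\alpha_2\cap\beta_2$ (there are $|e|$ of these, corresponding to the $|e|$ intersection points forced by the surgery slope). A $\spinc$-count then shows the generators are distributed one-or-a-few per $\spinc$-structure; the $\spinc$-structure of $\{x_i,p_j\}$ depends on $j$ (distinguishing the $|e|$ $\spinc$-structures of $\foe$) and the relevant filtration/homological data depends on $i$. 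The key point is that the only holomorphic disks contributing to $\delhatw$ with $n_z=n_w=0$ are the ones already present in the torus piece — the single bigon from $x$ to $y$ in Figure~\ref{Fig:modelknotfh} — together with their obvious products with the trivial factor in the new handle; disks using the new handle would be forced to cross $w$ or $z$, or are obstructed. Hence in each $\spinc$-structure $\fraks$ the complex $\cfkhat(\foe,K^*;\fraks)$ looks like the model complex: either it already has rank one, or it is a two-step complex $\ztwo\xrightarrow{\cong}\ztwo$ with trivial homology in the wrong grading — wait, that would give zero. So the correct statement I must verify is that after passing to the $\spinc$-decomposition, the surviving complex in each $\fraks$ has homology $\ztwo$; this forces the bigon $x\to y$ to connect generators lying in \emph{different} $\spinc$-structures once the handle is attached (because the surgery changes how $\fraks_z$ sees the disk), so that within a fixed $\fraks$ there is exactly one generator and no differential.

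The main obstacle, and the step deserving the most care, is precisely this $\spinc$-bookkeeping: I must pin down how the two generators $x,y$ of the model complex (together with the $|e|$ choices in the new handle) distribute over $\spinc(\foe)\cong\Z_{|e|}$, and confirm that the one nontrivial disk $x\to y$ does \emph{not} stay within a single $\spinc$-structure after stabilization. Concretely, using $\fraks_z(\xfat)-\fraks_z(\yfat)=\pd(\epsilon(\xfat,\yfat))$ and the fact that the bigon's domain, when pushed into the stabilized diagram, acquires a nonzero periodic-domain component along the surgery curve, one computes that $\fraks_z(x,p_j)$ and $\fraks_z(y,p_j)$ differ by the generator of $H^2(\foe)\cong\Z_{|e|}$. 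Therefore each $\spinc$-structure $\fraks$ contains exactly one of the $2|e|$ generators, the differential $\delhatw$ vanishes identically on $\cfkhat(\foe,K^*;\fraks)$, and $\hfkhat(\foe,K^*;\fraks)\cong\ztwo$ as claimed. (One also checks weak admissibility of the stabilized diagram after a small finger-move isotopy, which is routine.)
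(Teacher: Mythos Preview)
Your approach has a genuine gap. The product structure you impose on the generators---writing every intersection point as $\{x_i,p_j\}$ with $x_i\in\alpha_1\cap\beta_1$ and $p_j\in\alpha_2\cap\beta_2$---forces $\alpha_1\cap\beta_2=\alpha_2\cap\beta_1=\emptyset$. But then a short homology calculation shows that $H_1$ of the resulting $3$-manifold has a free $\Z$ summand (coming from the curve dual to $\alpha_1\sim\beta_1$), so this diagram cannot represent $\foe$, whose first homology is $\Z_{|e|}$. In the most natural reading, where the new curves live on the new handle, the diagram is simply that of the connected sum $\stwo\times\sone\,\#\,L$ for a lens space $L$. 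To genuinely encode $(-1/e)$-surgery on a fiber $\gamma$ the new attaching circles must intersect the old ones, since $\gamma$ links the core of the original handle; the generators of any correct genus-$2$ diagram for $(\foe,K^*)$ will not split as a product.

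This error surfaces arithmetically in your conclusion: you produce $2|e|$ generators but there are only $|e|$ $\spinc$-structures, so the assertion that each $\spinc$-structure contains exactly one generator is impossible. The attempted fix---that the model bigon ``acquires a periodic-domain component'' and thereby separates $(x,p_j)$ from $(y,p_j)$ in $\spinc$---does not follow from anything you have set up; whether or not that bigon survives the stabilization, two generators joined by any Whitney disk with $n_z=0$ lie in the same $\spinc$-structure, and you have not shown that no such disk exists.

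The paper's route is both different and much shorter. A slam dunk on the surgery picture identifies $\foe$ with a lens space, and an explicit \emph{genus-one} doubly-pointed Heegaard diagram adapted to $K^*$ is written down in which $\alpha$ and $\beta$ meet in exactly $|e|$ points, one per $\spinc$-structure. The differential then vanishes for $\spinc$ reasons and $\hfkhat(\foe,K^*)\cong\hfhat(\foe)\cong\ztwo^{|e|}$.
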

\begin{figure}[t]
\labellist\small\hair 2pt
\endlabellist
\centering
\includegraphics[width=10cm]{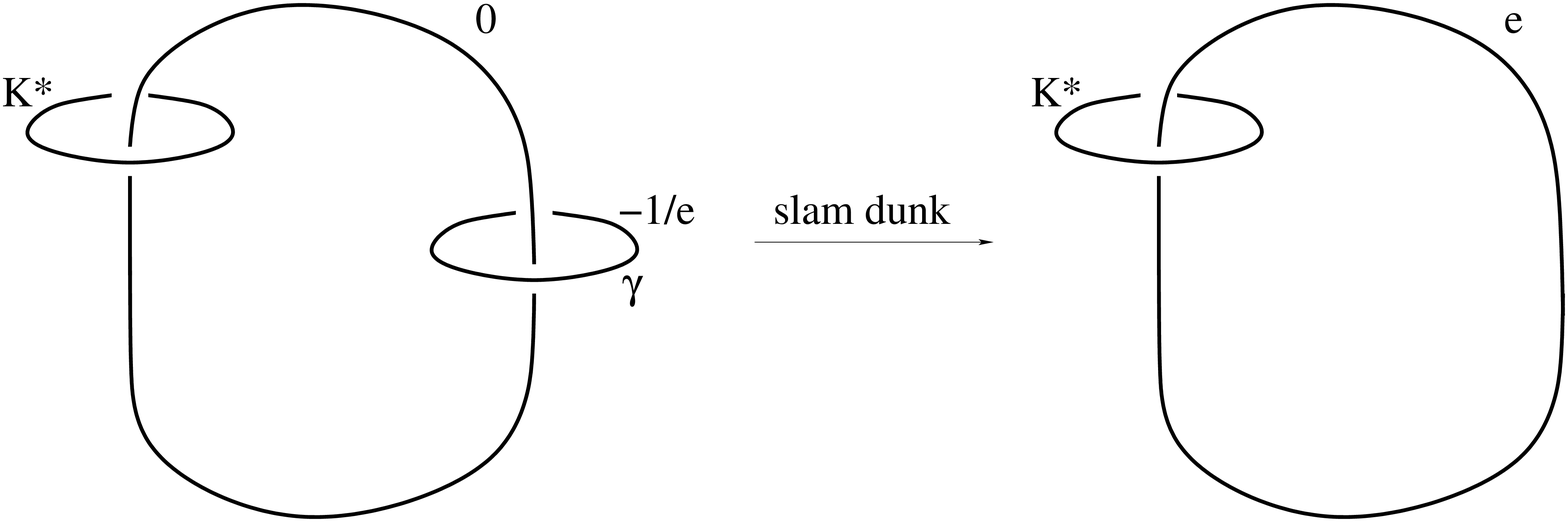}
\caption{Surgery raising the Euler number}
\label{Fig:firstproof}
\end{figure}

\begin{proof}  Denote by $\gamma$ and $K^*$ two distinct fibers of $\stwo\times\sone$. To change the Euler number we have to perform a $(-1/e)$-surgery
along $\gamma$. This is pictured in the left of Figure~\ref{Fig:firstproof}. By a 
slam dunk (cf.~\cite{GoSt}) we see that this is the same as the right of
Figure~\ref{Fig:firstproof}. Hence, $\foe$ is the lens space $L(1,e)$. As a matter
of fact, the knot $K^*$ in $L(1,e)$ generates the first homology group of $L(1,e)$.
To compute the knot Floer homology $\hfkhat(\foe,K^*)$ we have to find an adapted
Heegaard diagram: The $3$-sphere admits a Heegaard decomposition into two full-tori 
denoted by $H_1$ and $H_2$. It is possible to isotope the link pictured in the right 
of Figure~\ref{Fig:firstproof} such that the components of the link are the cores of 
the the full-tori $H_i$, $i=1,2$ where $K^*$ sits inside $H_1$. The gluing of the two
handlebodies $H_i$, $i=1,2$ is given by a diffeomorphism $\phi\co H_1\lra H_2$ which
sends $\mu_1$, a meridian of $H_1$, to $\lambda_2$, a longitude of $H_2$. Performing
the $e$-surgery as indicated in the right of Figure~\ref{Fig:firstproof}, we remove
a full-torus $D^2\times\sone$ from $H_2$ and glue a full-torus $H_2'$ back in, with a 
gluing map given by
\[
  \begin{array}{rrcl}
    \psi\co&\mu'&\lmt& e\cdot\mu_2+\lambda_2\\
    &\lambda'&\lmt&-\mu_2.
  \end{array}
\]
We form a new handlebody $H_1'$ by gluing together $H_1$ with 
$H_2\backslash(D^2\times\sone)$. We obtain a Heegaard decomposition 
$H_1'\cup_\partial H_2'$ of $\foe$ with gluing map given by
\[
  \mu_1'\lmt \psi^{-1}(\lambda_2)=\mu_2'+e\cdot\lambda_2'.
\]
This Heegaard decomposition is adapted to the knot $K^*$. 
Figure~\ref{Fig:firstproof02}
pictures the associated doubly-pointed Heegaard diagram. As we can see, there are
$e$ different intersection points generating the chain complex and all being 
associated to a different $\spinc$-structure. Thus, the differential vanishes identically and we have that
\[
 \hfkhat(\foe,K^*)=\hfhat(L(1,e))=\ztwo^{|e|}.
\]
\end{proof}

\begin{figure}[t]
\labellist\small\hair 2pt

\endlabellist
\centering
\includegraphics[width=6cm]{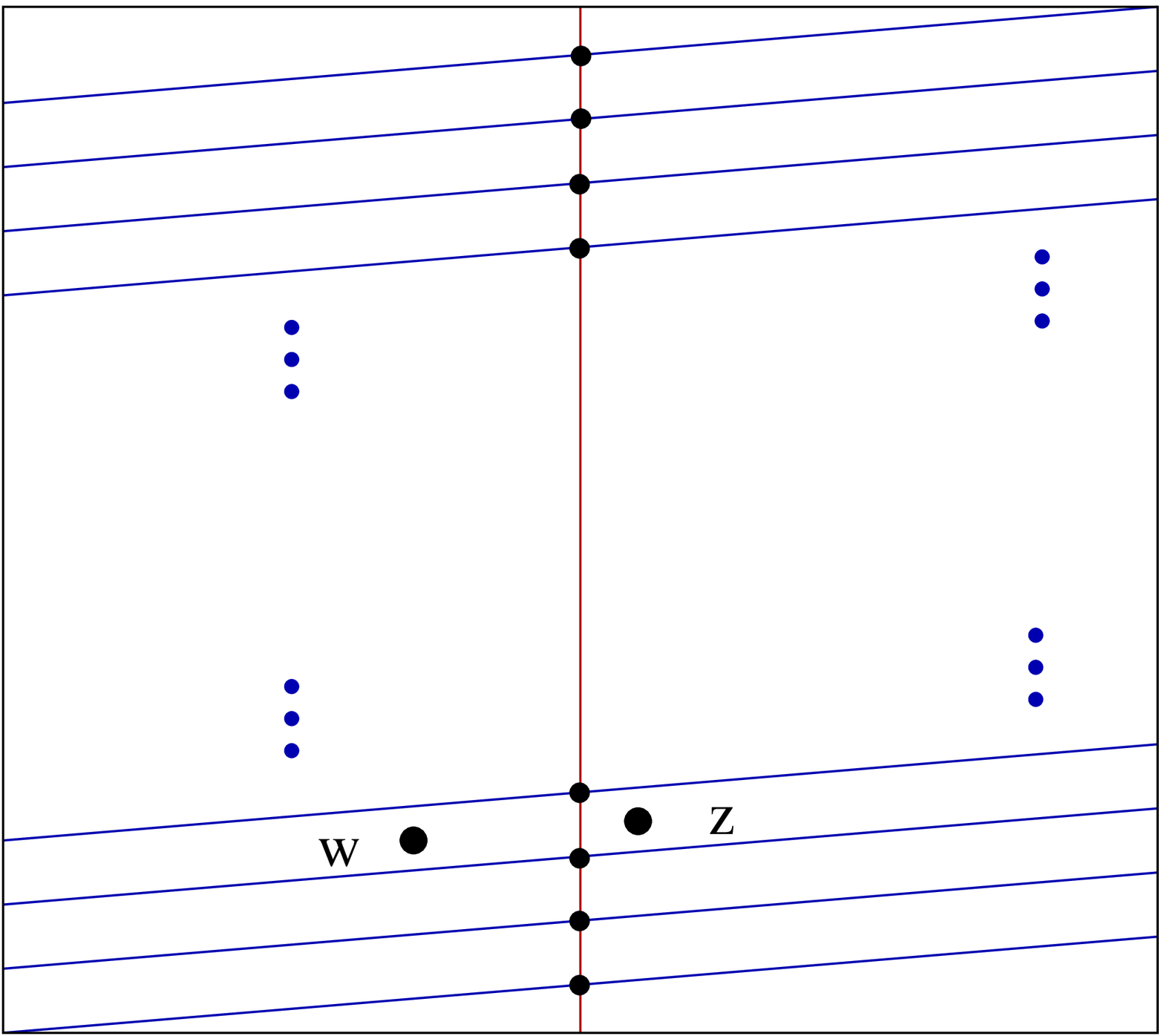}
\caption{Heegaard diagram adapted to the pair $(\foe,K^*)$.}
\label{Fig:firstproof02}
\end{figure}

\section{Three-dimensional Torus with a Fiber as Knot}\label{computefoone}
The computation given here serves as a demonstration how results 
like Proposition~\ref{oldres} can be used to simplify computations. 
In Figure~\ref{Fig:tthreeorient} we see a surgery diagram for $T^3$. Let us 
denote by $B_3$ the component linked with $K^*$, by $B_2$ the component linked with 
$B_3$ and by $B_1$ the remaining component.
 The manifold obtained from $\sthree$ by performing an 
$n$-surgery along $B_1$, an $m$-surgery along $B_2$ and a $k$-surgery along $B_3$ 
should be denoted by $B(n,m,k)$. The coefficients $n,m,k$ may lie in 
$\Z\cup\{\infty\}$. The surgery long 
exact sequence in knot Floer homology gives:
\begin{equation}
 \xymatrix@C=1.5pc@R=0.5pc{ 
 \hfkhat(B(0,\infty,0),K^*)\ar[r]
 &
 \hfkhat(B(0,0,0),K^*)\ar[r]
 &
 \hfkhat(B(0,1,0),K^*)\ar@/^1.5pc/[ll]
 \\ &&
 }\label{seq:comp01}
\end{equation}
The manifold $B(0,\infty,0)$ is a connected sum of two $\stwo\times\sone$'s. The
knot $K^*$ in it corresponds to a fiber of one of the $\stwo\times\sone$ components.
By \cite[Corollary~6.8]{Saha} we know that
\[
 \hfkhat(B(0,\infty,0),K^*)
 \cong
 \hfhat(\stwo\times\sone)
 \otimes
 \hfkhat(\stwo\times\sone,K^*).
\]
By Proposition~\ref{oldres} the group at the right is zero and, hence, the
tensor product vanishes. By exactness of the sequence we see that
\[
  \hfkhat(\mathbb{F}_1^0,K^*)
  =
  \hfkhat(B(0,0,0),K^*)
  \cong
  \hfkhat(B(0,1,0),K^*).
\]
\begin{figure}[t]
\labellist\small\hair 2pt
\endlabellist
\centering
\includegraphics[width=3cm]{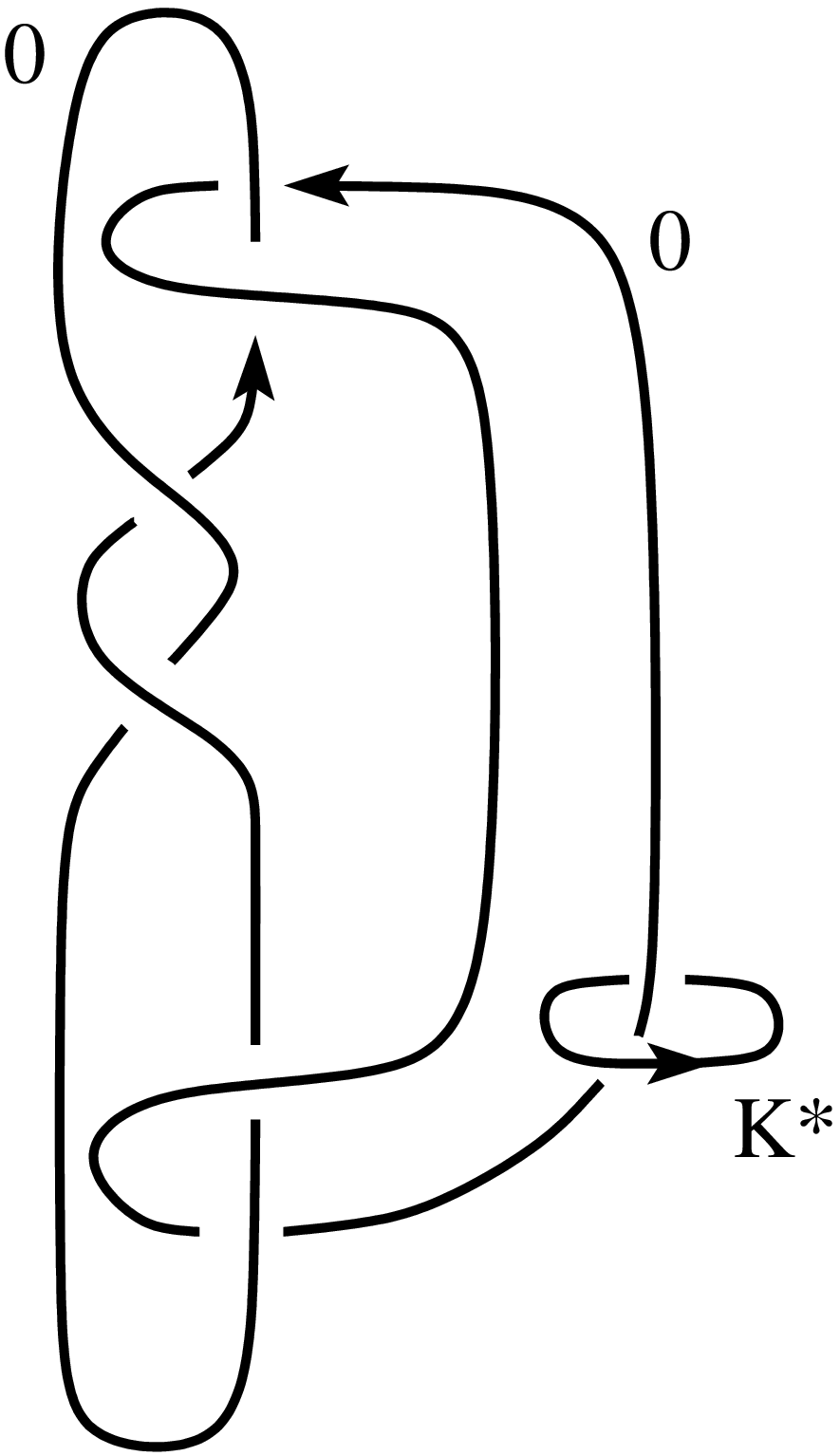}
\caption{$B(0,1,0)$ is isomorphic to the manifold given here.}
\label{Fig:whitehead}
\end{figure}
\noindent The manifold $B(0,1,0)$ is isomorphic to the manifold given in 
Figure~\ref{Fig:whitehead}. We kept track of the knot $K^*$. Denote by
$W(n,m)$ the manifold obtained from the link given in Figure~\ref{Fig:whitehead}
with surgery coefficient $n$ attached to the knot with the twist and with
surgery coefficient $m$ attached to the knot with meridian $K^*$. The following
sequence is exact:
\begin{equation}
  \xymatrix@C=1.5pc@R=0.5pc{
  \hfkhat(W(\infty,0),K^*)\ar[r]
  &
  \hfkhat(W(0,0),K^*)\ar[r]
  &
  \hfkhat(W(1,0),K^*)\ar@/^1.5pc/[ll]\\ &&
  }\label{seq:comp02}
\end{equation}
Since $(W(\infty,0),K^*)$ equals $(\stwo\times\sone,K^*)$ the corresponding knot 
Floer homology vanishes and, hence,
\[
  \hfkhat(W(0,0),K^*)
  \cong
  \hfkhat(W(1,0),K^*).
\]
Denote by $T$ the right-handed trefoil knot and denote by $\sthree_k(T)$ the result 
of a $k$-surgery along $T$. The pair $(W(1,0),K^*)$ is isomorphic to 
$(\sthree_0(T),\mu)$, where $\mu$ is a meridian of $T$ (see Figure~\ref{Fig:s30k}). 
By abuse of notation, we will denote this meridian by $K^*$. Consider the 
following exact sequence:
\begin{equation}
\xymatrix{\hfkhat(\sthree,T)\ar@/_2pc/[rr]^{\Goo}& 
\hfhat(\sthree_{+1}(T))\ar[l]_{\Foo} & \hfkhat(\sthree_0(T),K^*)\ar[l]_{\Hoo}\\ & 
& }
\label{seq:seqref01}
\end{equation}
This sequence is derived by either applying the Dehn twist sequence from 
\cite{Saha} or applying the surgery exact triangle in knot Floer homology to 
$(\sthree,T)$, where we start with a surgery along $\gamma$ with framing $0$
with $\gamma$ being a push-off of $T$ determining the $(+1)$-framing along $T$.
In both cases the map $\Fhat$ comes from counting holomorphic triangles in a
suitable Heegaard triple diagram. In the Dehn twist case this can be seen
by applying the results of \cite{Saha03}.
\begin{figure}[t]
\labellist\small\hair 2pt
\endlabellist
\centering
\includegraphics[width=3.5cm]{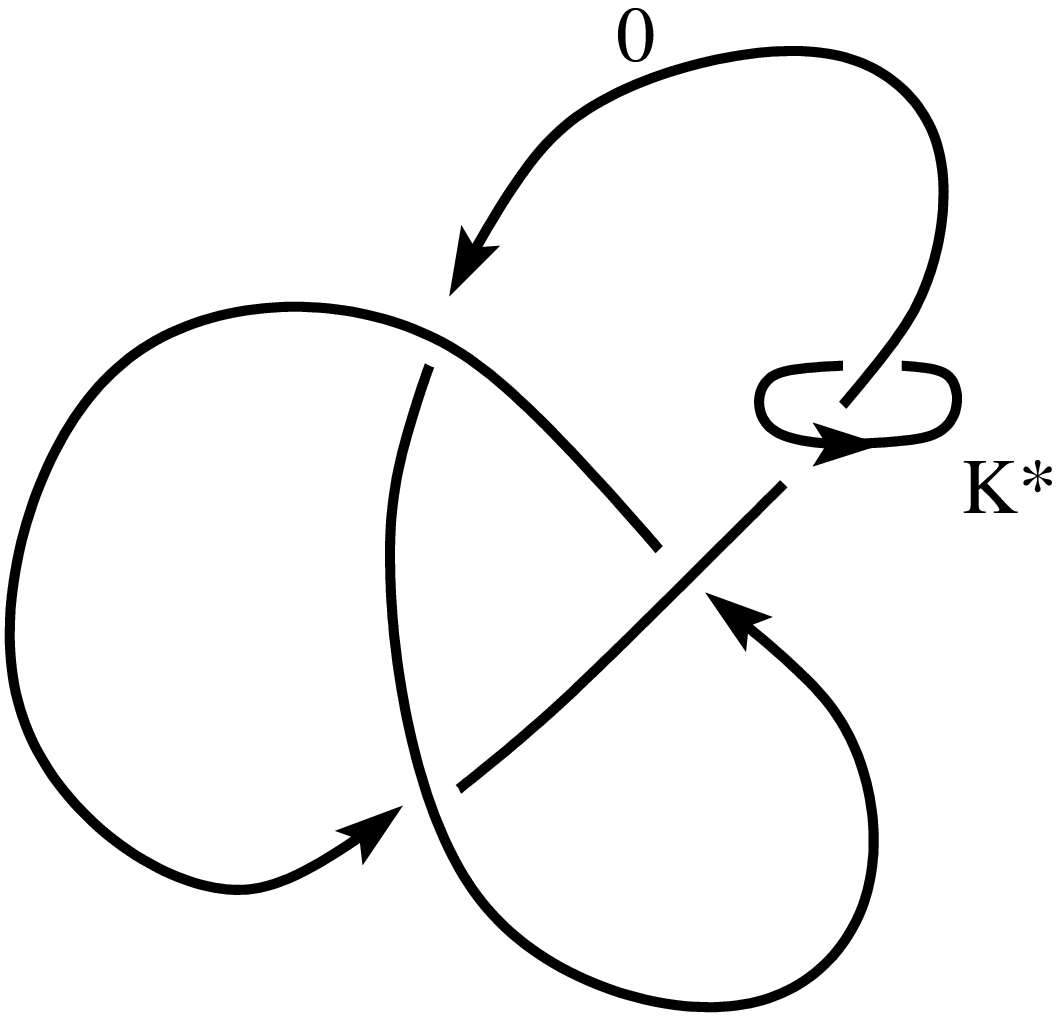}
\caption{$\hfkhat(T^3,K^*)$ is isomorphic to $\hfkhat(\sthree_0(T),K^*)$ 
with the orientations given in the picture.}
\label{Fig:s30k}
\end{figure}
\vspace{0.3cm}\\
It is worth mentioning, that our results on knot Floer homologies 
from the previous
section already reveals most of the structure of $\hfkhat(\sthree_0(T),\mu)$: 
First of all, Theorem~\ref{mysym:kfh} tells us that the rank of the knot Floer homology is even. It is easy to see that the right-handed trefoil has Seifert genus 
one. So, according to Proposition~\ref{thm:van}, the group is concentrated in the subgroups 
associated to the $\spinc$-structures $\fraks_0$ and $\fraks_0-PD[\mu]$, where
$\fraks_0$ is the $\spinc$-structure in $\sthree_0(T)$ with trivial first chern
class. The knot conjugation symmetry given in Corollary~\ref{knotconjug} provides an
isomorphism between the groups $\hfkhat(\sthree_0(T),\mu;\fraks_i)$, $i=1,2$. It 
remains to compute the rank of the homology.\vspace{0.3cm}\\
\noindent First of all $\hfhat(\sthree_{+1}(T))\cong\ztwo$ was calculated in \cite[\S8.1]{OsZa03} (cf.~also \cite{OzSti}). The 
knot Floer homology of the right-handed trefoil was given in \cite{OsZa04}, it is 
\[
  \hfkhat(\sthree,T)
  \cong
  (\ztwo)_{(-1;-2)}
  \oplus
  (\ztwo)_{(0;-1)}
  \oplus
  (\ztwo)_{(1;0)}
\]
where the subscript $(i;j)$ means that the corresponding factor sits in the subgroup
$\hfkhat_j(\sthree,T;i)$. Recall, that the maps defined by counting holomorphic
triangles, like $\Foo$, can be refined with respect to $\spinc$-structures of the 
underlying cobordism, $W$ say, i.e.
\[
 \Foo
 =
 \sum_{\fraks\in\mbox{\rm{\tiny Spin}}^c(W)}\!\!\!\!\!\!\!\!\Foo_\fraks
\]
where
\[
 \Foo_\fraks
 \co
 \hfhat(\sthree_{+1}(T);\left.\fraks\right|_{\sthree_{+1}(T)})
 \lra
 \hfkhat(\sthree,T;\left.\fraks\right|_{\sthree}).
\]
These maps shift the absolute $\Q$-grading. The
grading shift for $\Foo_\fraks$ is given by
\[
 d(\fraks)=\frac{1}{4}(c_1(\fraks)^2-2\chi(W)-3\sigma(W))
\]
To compute this shift of grading, we have to determine the topology of $W$ and
the signature of the intersection form. 
\begin{lem}\label{cobtopo} The Euler-characteristic 
of $W$ equals $1$ and the signature equals $-1$.
\end{lem}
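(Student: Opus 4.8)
The plan is to recognize $W$ as a single $2$-handle cobordism and read off both invariants from a handle diagram. Recall that $\Foo$ is the map defined by counting holomorphic triangles in a Heegaard triple subordinate to the surgery exact triangle of $(\sthree,T)$; the underlying four-manifold $W$ is therefore the cobordism \emph{from} $\sthree_{+1}(T)$ \emph{to} $\sthree$ obtained from $\sthree_{+1}(T)\times[0,1]$ by attaching a single four-dimensional $2$-handle (along the core of the surgery solid torus). Attaching a $2$-handle raises the Euler characteristic by $1$, and $\chi\bigl(\sthree_{+1}(T)\times[0,1]\bigr)=0$ because the Euler characteristic of a closed odd-dimensional manifold vanishes; hence $\chi(W)=1$, which settles the first assertion.

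For the signature I would give a handle picture of $W$ and then cap it off. Represent $\sthree_{+1}(T)$ by the framed knot $T$ with coefficient $+1$; in this picture the $2$-handle of $W$ is attached along a meridian $d$ of $T$ with framing $0$, and a slam-dunk (erasing $T$ turns $d$ into a $(-1)$-framed unknot) confirms that the outgoing end really is $\sthree$. Now cap off: let $X_1$ be $D^4$ with a $2$-handle attached along $T$ with framing $+1$, so that $\partial X_1=\sthree_{+1}(T)$ and the intersection form of $X_1$ is $\langle+1\rangle$, hence $\sigma(X_1)=1$. Gluing $X_1$ to $W$ along $\sthree_{+1}(T)$ and then a four-ball along $\sthree$ produces the closed four-manifold whose Kirby diagram consists of $T$ with coefficient $+1$ and its meridian $d$ with coefficient $0$; its intersection form is the linking matrix $\left(\begin{smallmatrix}1&1\\1&0\end{smallmatrix}\right)$, which has determinant $-1$ and signature $0$. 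By Novikov additivity $0=\sigma(X_1)+\sigma(W)+\sigma(D^4)=1+\sigma(W)$, so $\sigma(W)=-1$. Equivalently, the orientation reversal $-W$ is the standard surgery $2$-handle cobordism from $\sthree$ to $\sthree_{+1}(T)$, whose intersection form is $\langle+1\rangle$; since $\chi$ is insensitive to orientation while $\sigma$ changes sign, this again yields $\chi(W)=1$ and $\sigma(W)=-1$.

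The one step that requires genuine care is the identification of $W$ used above: that the $2$-handle occurring in the last arrow of the surgery exact triangle is attached along a meridian of $T$ with the $0$-framing. This is the Kirby-calculus fact that undoing a surgery $2$-handle amounts to attaching a $0$-framed meridian, together with the fact that the three cobordisms of the surgery exact triangle arise from a single Heegaard triple diagram. Note that attaching a meridian with framing $+2$ would also produce $\sthree$, but the resulting cobordism would be positive definite with signature $+1$ — the wrong answer — so the framing genuinely has to be tracked. Once the correct $W$ is pinned down, the computation is routine.
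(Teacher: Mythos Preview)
Your argument is correct and runs parallel to the paper's: both recognize $W$ as a single $2$-handle cobordism (hence $\chi(W)=1$), cap off the $\sthree_{+1}(T)$ end with $X_1$ (the paper's $X$), and arrive at the same linking matrix $\left(\begin{smallmatrix}1&1\\1&0\end{smallmatrix}\right)$ for $Z=X_1\cup W$. The only substantive difference is in how $\sigma(W)$ is read off: you invoke Novikov additivity ($\sigma(Z)=0$, $\sigma(X_1)=1$, hence $\sigma(W)=-1$), while the paper instead locates the generator $\alpha=(-1,1)$ of $H_2(W,\partial W)$ inside $H_2(Z)$ and computes $Q_Z(\alpha,\alpha)=-1$ directly. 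Your route is slightly slicker; the paper's has the advantage of exhibiting the intersection form of $W$ explicitly, which is what is actually used afterwards to evaluate $c_1(\fraks)^2$.

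One caution about your ``equivalently'' alternative. The assertion that $-W$ coincides with the surgery trace $V\co\sthree\to\sthree_{+1}(T)$ is not automatic: turning a cobordism upside down does \emph{not} reverse the orientation of the underlying $4$-manifold, and in the surgery exact triangle the three $2$-handle cobordisms do not compose to a product (their composite is a blow-up of $Y\times I$), so the map $\sthree_{+1}(T)\to\sthree$ is not simply $V$ read backwards. Your own closing paragraph already warns that the framing must be tracked carefully; the same care is needed before identifying $-W$ with $V$. Since your Novikov argument is complete on its own, this does not affect the validity of your proof, but the alternative as stated is under-justified.
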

\begin{proof} The cobordism $W$ is given by a $(-1)$-surgery along a meridian
$\gamma$ of a meridian $\mu$ of $T$. This means, that $W$ is given by attaching a single
$2$-handle to $\{1\}\times\sthree_{+1}(T)$ of $[0,1]\times\sthree_{+1}(T)$. Thus, 
$H_0(W,\partial_{-} W)
=
H_1(W,\partial_{-} W)
=
H_3(W,\partial_{-} W)
=
H_4(W,\partial_{-}W)=0$ 
and
$H_2(W,\partial_- W)\cong\Z$ (cf.~\cite[p.~111]{GoSt}).  
We compute 
the homology groups $H_k(W)$ using the long exact sequence of the pair 
$(W,\partial W)$ where $\partial W=-\sthree_{+1}(T)\sqcup\sthree$: 
We get that $H_0(W)=\Z$, $H_1(W)=0$, $H_2(W)=\Z$, $H_3(W)\cong\Z$ and $H_4(W)=0$.
Consequently, $\chi(M)$ equals $1$. We perform a positive Rolfsen twist about $\mu$.
This changes the framing of $T$ to $+1$, the framing of $\gamma$ to $0$ and links
$\gamma$ and $T$. In this picture, $\gamma$ is a meridian of $T$. Putting the
framing coefficient of $T$ in brackets gives a relative Kirby diagram of $W$. We define
a cobordism $X$ obtained from the $D^4$ by gluing a $2$-handle along $T$ with framing
$+1$. The cobordism
\[
 Z=X\cupb W
\]
is a handlebody and, thus, the intersection form $Q_{Z}$ is given by the linking
pairing of the Kirby diagram of $Z$ we obtain by joining the diagrams of $X$ and
$W$ (cf.~\cite[Proposition~4.5.11]{GoSt}). The linking matrix is given by
\begin{equation}
 \left(
 \begin{matrix}
 1 & 1 \\
 1 & 0
 \end{matrix}
 \right)\label{eq:inform}
\end{equation}
after choosing suitable orientations on $T$ and $\gamma$. The 
homology $H_2(Z)$ is generated by surfaces $S_1$ and $S_2$ we obtain by 
choosing Seifert surfaces bounding $T$ and $\gamma$ and capping these off 
with core discs of the handles we attach along $T$ and $\gamma$. Recall, 
that with respect to this basis the intersection form $Q_Z$
is given by (\ref{eq:inform}) (cf.~\cite[Proposition~4.5.11]{GoSt}). The
surface $S_1$ generates $H_2(X)$. The element $\alpha=(-1,1)$ is a primitive element
in $H_2(Z)$ such that $Q_Z(\alpha,[S_1])=0$. Hence, the element $\alpha$ generates 
$H_2(W,\partial W)\subset H_2(Z)$. It is easy to compute that $Q_Z(\alpha,\alpha)=-1$
and, hence, the signature of $W$ is $-1$.
\end{proof}
\noindent Applying the last lemma to the maps $\Fhat_\fraks$ the grading shift 
equals
\[
 d(\fraks)
 =\frac{1}{4}(c_1(\fraks)^2+1)
\]
and the values of $c_1(\fraks)^2$ are always less or equal to zero. Since, 
$\hfhat(\sthree_{+1}(T))$ is concentrated in degree $-2$ 
(cf.~\cite[\S8.1]{OsZa06}), we have that
\[
 \Foo
 =
 \sum_{\fraks\in\mbox{\rm{\tiny Spin}}^c(W)}\!\!\!\!\!\!\!\! \Foo_\fraks
 \;=
 \sum_{\fraks\in\mathcal{S}} \Foo_\fraks
\]
where $\mathcal{S}$ are those $\fraks\in\spinc(W)$ such that
$c_1(\fraks)^2=-1$. If $\frakt$ is not contained in $\mathcal{S}$, then
\[
 \Foo_\frakt\bigl(\hfhat(\sthree_{+1}(T))\bigr)
 \subset
 \hfkhat_{d(\frakt)-2}(\sthree,T)=0.
\]
Note that the group is zero, since $d(\frakt)-2\not\in\{-2,-1,0\}$. Recalling the 
knot conjugation invariance of the maps induced by cobordisms (see Proposition~\ref{knotconjug}), i.e.~
\[
 \Foo_{\fraks}
 =
 \mN_{(\sthree,T)}
 \circ
 \Foo_{\mN(\fraks)}
 \circ
 \mN_{(\sthree_{+1}(T),U)}
\]
where $U$ denotes the unknot, we see that $\Foo_{\fraks}$ vanishes if and only if 
$\Foo_{\mN(\fraks)}$ vanishes. However, in this case $\mN(\fraks)=\mJ(\fraks)$ since both $U$ and $T$ are null-homologous. Especially observe, that the 
$\spinc$-structure of $W$ with trivial chern class is not contained in 
$\mathcal{S}$. Thus, we can write $\mathcal{S}$ as
\[
 \{\fraks_{-1},\mJ(\fraks_{-1})\}
\]
where 
$c_1(\fraks_{-1})^2=c_1(\mJ(\fraks_{-1}))^2=-1$.
Thus,
\[
  \Foo
  =
  \Foo_{\fraks_{-1}}
  +
  \Foo_{\mJ(\fraks_{-1})}.
\]
With a Mayer-Vietoris computation we see that $H_1(\sthree_{+1}(T))=0$, 
since $H_1(\sthree\backslash\nu T)$ is generated by a meridian of $T$. Thus, 
\begin{eqnarray*}
 \hfhat(\sthree,T)
 &=&
 \hfhat(\sthree,T;\fraks_0)\\
 \hfkhat(\sthree_{+1}(T))
 &=&
 \hfkhat(\sthree_{+1}(T);\fraks_0)
\end{eqnarray*}
where $\fraks_0$ are the unique $\spinc$-structures on both $\sthree$ and 
$\sthree_{+1}(T)$ with vanishing first chern class. For these structures we have
that $\mJ(\fraks_0)=\fraks_0$ and, hence, 
$\Foo_{\fraks_{-1}}+\Foo_{\mJ(\fraks_{-1})}$ vanishes, i.e.~either the
summands vanish individually, or their sum vanishes. 
This finally shows that the map $\Foo$ vanishes identically and from exactness of
the sequence (\ref{seq:seqref01}) we get that
\[
 \hfkhat(\mathbb{F}^0_1,K^*)
 \cong
 \hfkhat(\sthree_0(T),K^*)\cong\ztwo^{4}.
\]
It is easy to see that
$H^2(T^3)\cong\Z\oplus\Z\oplus\Z$.  Recall, that we denoted by $B_1$ the component 
at the lower-left of 
Figure~\ref{Fig:t3}, $B_2$ the component at the top of Figure~\ref{Fig:t3}
and $B_3$ the component at the lower-right. We have chosen $K^*$ to be the meridian 
of $B_3$. The knot $B_1$ bounds a disk $D$ in $\sthree$ which intersects $B_3$ in 
exactly two points. Choose a small tubular neighborhood $\nu B_3$ of $B_3$, then $D$
cuts $\partial(\nu B_3)$ into two cylinders $Z_1$ and $Z_2$, i.e. $\partial(\nu 
B_3)\backslash{D}=Z_1\sqcup Z_2$. Pick one of them, $Z_1$ say, and define 
\[
  T
  =
  \overline{D\backslash\nu B_3}
  \cup_\partial
  \overline{Z_1}.
\]
$T$ is an orientable surface with boundary $B_1$. Endow $T$ with an arbitrary 
orientation. The surface $T$ can be thought of a sitting inside $T^3$. Here, the 
boundary $\partial T$ bounds a disk. We cap off $T$ with a disk to obtain $\That$. 
This torus $\That$ is disjoint from $K^*$. Observe, that the first 
homology of $T^3$ is generated by meridians $\mu_i$ of $B_i$, $i=1,\dots,3$. By 
abuse of notation we will also denote by $\mu_i$ the $\spinc$-structures  given by 
these meridians. By construction the genus of $\That$ is one and using the formula
$(\ref{eq:chernformula})$ we see that for any $\spinc$-structure 
$\fraks=\lambda\mu_1$ with $\lambda\not=0$ we have that
\[
 \bigl|\bigl<c_1(\fraks),[\That]\bigr>\bigr|
 =\bigl|2\lambda\cdot\#(\mu_1,\That)\bigr|
 =2\lambda
 >2g(\That)-2=0.
\]
By Theorem~\ref{myadjuncineq} we see that $\hfkhat(T^3,K^*;\fraks)$ is zero. Using 
the same construction as above we can construct a torus intersecting $\mu_2$ in a 
single point and which is disjoint from $K^*$. The same reasoning as above shows 
that the knot Floer homology groups are zero, too, for all 
$\fraks\in\Z\backslash\{0\}\bigl<\mu_2\bigr>$. A similar torus can be constructed 
using the knot $B_3$, however, in this case the torus intersects $K^*$ in one 
single point. By Theorem~\ref{myadjuncineq} we see that for all 
$\fraks\in\Z^+\backslash\{0\}\bigl<\mu_1\bigr>$ the 
associated knot Floer homology vanishes. 

\begin{theorem} For both $\spinc$-structures $\fraks_i$, $i=0,1$ the associated knot Floer homology $\hfkhat(T^3,K^*;\fraks_i)$ is isomorphic to $\ztwo\oplus\ztwo$. 
\end{theorem}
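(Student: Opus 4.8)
The plan is to assemble the statement from three ingredients already established in this section. First, the chain of surgery exact sequences (\ref{seq:comp01}), (\ref{seq:comp02}), (\ref{seq:seqref01}), together with Lemma~\ref{cobtopo} and the resulting vanishing of $\Foo$, gives $\hfkhat(T^3,K^*)\cong\hfkhat(\mathbb{F}_1^0,K^*)\cong\hfkhat(\sthree_0(T),K^*)\cong\ztwo^4$, so the total $\ztwo$-rank equals $4$. Second, the three tori $\That_1,\That_2,\That_3\subset T^3$ constructed above (the first two disjoint from $K^*$, the third meeting it in a single point), via Theorem~\ref{myadjuncineq}, confine the homology to at most two $\spinc$-structures. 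Third, the knot conjugation symmetry of Corollary~\ref{mysymmetry2} identifies the two surviving summands, after which a rank count concludes the proof.

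In more detail, the $\spinc$-bookkeeping goes as follows. The classes $[\That_1],[\That_2],[\That_3]$ are Poincar\'{e} dual to the meridians $\mu_1,\mu_2,\mu_3$, which generate $H_1(T^3;\Z)\cong\Z^3$, so they form a basis of $H_2(T^3;\Z)$; since $H^2(T^3;\Z)$ is torsion free, a $\spinc$-structure $\fraks$ is determined by the triple $(\langle c_1(\fraks),[\That_1]\rangle,\langle c_1(\fraks),[\That_2]\rangle,\langle c_1(\fraks),[\That_3]\rangle)$. Part $(ii)$ of Theorem~\ref{myadjuncineq}, applied with the genus-one tori $\That_1$ and $\That_2$, forces the first two entries to vanish on the support of $\hfkhat(T^3,K^*)$. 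Part $(i)$, applied with $\That_3$ carrying the orientation for which its algebraic intersection with $K^*$ is nonpositive, bounds the third entry from one side, and the isomorphism $\hfkhat(T^3,K^*;\fraks)\cong\hfkhat(T^3,K^*;\mN(\fraks))$ of Corollary~\ref{mysymmetry2}, with $\mN$ the conjugation shifted by $PD[K^*]$, bounds it from the other side, leaving exactly two admissible values. Write $\fraks_0,\fraks_1$ for the two resulting $\spinc$-structures; since a fiber generates the $\sone$-summand of $H_1(T^3)$, the class $[K^*]$ is primitive and in particular not divisible by two, so, as in the proof of Theorem~\ref{mysym:kfh}, $\mN$ is fixed-point free and $\fraks_0\neq\fraks_1$ with $\fraks_1=\mN(\fraks_0)$.

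It remains to count ranks. By Corollary~\ref{mysymmetry2}, $\hfkhat(T^3,K^*;\fraks_0)\cong\hfkhat(T^3,K^*;\fraks_1)$, so both groups have the same $\ztwo$-rank $r$; together with the vanishing outside $\{\fraks_0,\fraks_1\}$ and the total rank $4$ this forces $2r=4$, hence $r=2$ and $\hfkhat(T^3,K^*;\fraks_i)\cong\ztwo\oplus\ztwo$ for $i=0,1$. The only genuinely delicate point is the second paragraph: one must verify that $\That_1,\That_2,\That_3$ indeed span $H_2(T^3;\Z)$, and that it is precisely the asymmetry of part $(i)$ of Theorem~\ref{myadjuncineq} (no absolute value), compensated by the shift in $\mN$, that leaves a window of width two rather than collapsing the homology onto a single $\spinc$-structure of rank $4$ --- the latter being exactly the scenario excluded in Remark~\ref{remark01}.
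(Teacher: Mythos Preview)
Your proposal is correct and follows essentially the same route as the paper: total rank $4$ from the surgery sequences, confinement to two $\spinc$-structures via the tori and Theorem~\ref{myadjuncineq}, and then the symmetry plus a rank count. The only cosmetic difference is how the second bound on the $\That_3$-coordinate is obtained: the paper reverses the orientation of $K^*$, reruns the adjunction argument for $-K^*$, and then invokes the point-swap isomorphism of Proposition~\ref{mysymmetry} (as in the proof of Proposition~\ref{thm:van}) to translate this back to $K^*$; you instead apply the knot conjugation $\mN$ of Corollary~\ref{mysymmetry2} directly to transport the one-sided bound to the other side. Since $\mN$ is by definition the composition of point-swap with ordinary conjugation, the two arguments are equivalent.
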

\begin{proof} Reversing the orientation of the fiber $K^*$, we may apply a 
similar reasoning as above to see that for all 
$\fraks\in(\Z\oplus\Z\oplus\Z^-)\backslash\{0\}$ the associated group 
$\hfkhat(T^3,-K^*;\fraks)$ is zero. Applying Proposition~\ref{mysymmetry} as it was 
done in the proof of Proposition~\ref{thm:van}, we see that
\[
 \hfkhat(T^3,K^*;\fraks)\not=0
\]
is possible if $\fraks\in\{\fraks_0,\fraks_0-PD[K]\}$, only. In the following, denote by $\fraks_1=\fraks_0-PD[K]$. With the point-swap symmetry 
(see Proposition~\ref{mysymmetry2}) we 
see that both groups $\hfkhat(T^3,K^*;\fraks_0)$ and $\hfkhat(T^3,K^*;\fraks_1)$ 
are isomorphic. So, both $\hfkhat(T^3,K^*;\fraks_i)$ for $i=0,1$ are isomorphic 
to $\ztwo^2$. 
\end{proof}
\section{Proof of Theorem~\ref{mythm:01}}\label{sec:surgerypicture}
\begin{figure}[t]
\labellist\small\hair 2pt
\endlabellist
\centering
\includegraphics[width=5cm]{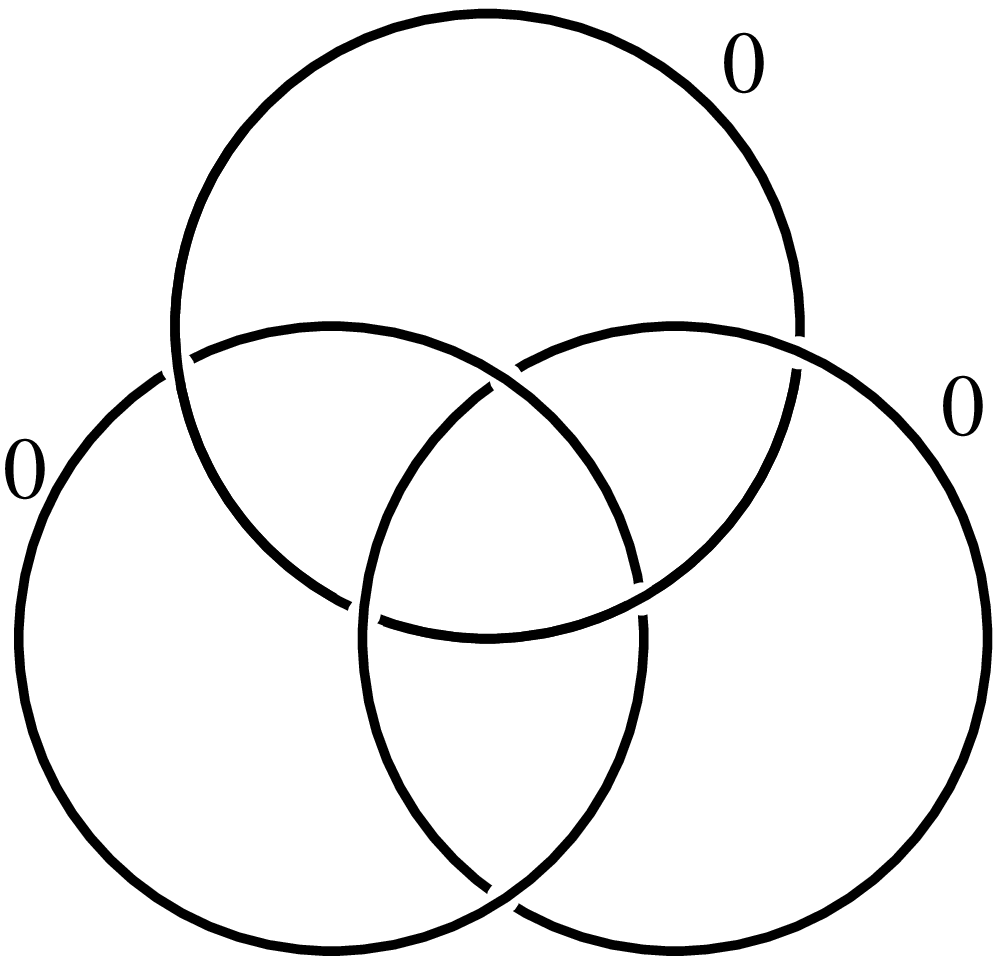}
\caption{Surgery diagram for $T^3$.}
\label{Fig:t3}
\end{figure}
In Figure~\ref{Fig:t3} we see a surgery diagram for the $3$-dimensional torus $T^3$.
The pictured link is called the Borromean link. To describe the bundle 
$\Sigma_2\times\sone$ we will start with two $3$-dimensional tori and perform a 
fiber-connected sum. It is not hard to see, that a fiber-connected sum of two
$3$-dimensional tori yields $\Sigma_2\times\sone$. Denote by $B\subset\sthree$ the 
Borromean link as pictured in Figure~\ref{Fig:t3}. Denote the three components of 
$B$ by $B_1$, $B_2$ and $B_3$. Observe, that a meridian of $B_3$ say is isotopic to a 
fiber of $T^3$. Now decompose the sphere $\sthree$ into two full-tori $H_0$ and 
$H_1$, where the gluing between these two tori is given by a map $\phi\co\partial 
H_0\lra\partial H_1$ sending a meridian $\mu_0$ of $\partial H_0$ to a longitude 
$\lambda_1$ of $\partial H_1$ and sending a longitude $\lambda_0$ of $\partial H_0$ 
to a meridian $\lambda_1$ of $\partial H_1$. It is possible to isotope the Borromean 
link $B$ such that it sits in the interior of $H_0$ and such that $B_3$ is the core 
of the torus $H_0$. In this description $\mu_0$, and hence $\lambda_1$, are fibers 
of the torus $T^3$. Thus, we may think the fibration of $T^3$ at 
$H_1=D^2\times\sone$ as being given
by the projection
\[
  \begin{array}{rrclcc}
  H_1=&D^2&\times&\sone&\lra&\sone\\
      &(r&,&\theta)&\lmt&\theta.
  \end{array}
\]
Now, consider a second $3$-dimensional torus. This torus is given by surgery along
a Borromean link $B'\subset\sthree$. We again decompose $\sthree$ into two 
tori $H_0'$ and $H_1'$ but now isotope $B'$ to sit in the interior of $H_1'$, the 
component $B_3'$ being the core of $H_1'$. Analogous to our description before, we 
denote by $\mu_i'$, $i=0,1$ meridians of $H_i'$ and by $\lambda_i'$, $i=0,1$ 
longitudes of $H_i'$. The gluing of the tori $H_0'$ and $H_1'$ is given like 
described above for $H_0$ and $H_1$. We may think the fibration of the 
$3$-dimensional torus at $H_0'=D^2\times\sone$ as being given by the projection
\[
  \begin{array}{rrclcc}
  H_0'=&D^2&\times&\sone&\lra&\sone\\
      &(r&,&\theta)&\lmt&\theta.
  \end{array}
\]
A fiber-connected sum of the tori can be described by 
\begin{equation}
  T^3\backslash(int(H_1))\cup_\partial T^3\backslash(int(H_0'))
  \label{eq:fibersum}
\end{equation}
where the gluing is given by identifying $\lambda_1$ with $\lambda_0'$. Since
$\lambda_1$ is identified with $\mu_0$ and $\lambda_0'$ identified with $\mu_0'$ 
the manifold (\ref{eq:fibersum}) turns into
\begin{equation}
 H_0\cup_{\partial}H_1'
 \label{eq:fibersum2}
\end{equation}
where the gluing is given by sending $\mu_0$ to $\mu_1'$. Thus, $\Sigma_2\times\sone$
is obtained from $\stwo\times\sone$ by performing $0$-surgery along two Borromean
knots. The manifold $\stwo\times\sone$ is given by performing a $0$-surgery along
an unknot in $\sthree$. Observe, that in (\ref{eq:fibersum2}) the knot $B^3$ 
intersects each sphere $\stwo\times\{*\}$ in exactly one point, transversely. In the
surgery picture of $\stwo\times\sone$ we can exactly determine the spheres. In 
Figure~\ref{Fig:fibersandknot} we see the intersection the unit circle in 
$\R^2\times\{0\}\subset\R^3$, sitting in the $xy$-plane, with the $xz$-plane. Assume 
we have performed a $0$-surgery along that knot. In Figure~\ref{Fig:fibersandknot} 
the red lines picture the intersections of the spheres $\stwo\times\{*\}$ with the 
$xz$-plane. The knot $B_3$ is a knot which intersects each of the spheres in a 
single point, transversely. The blue circle around the left point pictures an 
appropriate knot. Thus, we may think of $B_3$ as being that knot. Since $B'_3$ has 
intersects each of
the $\stwo\times\{*\}$ in a single point, too, the blue circle around the right 
point of Figure~\ref{Fig:fibersandknot} can be thought of as begin $B'_3$. Hence, 
Figure~\ref{Fig:surgerysigmatwo} is a surgery diagram for $\Sigma_2\times\sone$. 
\begin{figure}[t]
\labellist\small\hair 2pt
\endlabellist
\centering
\includegraphics[width=10cm]{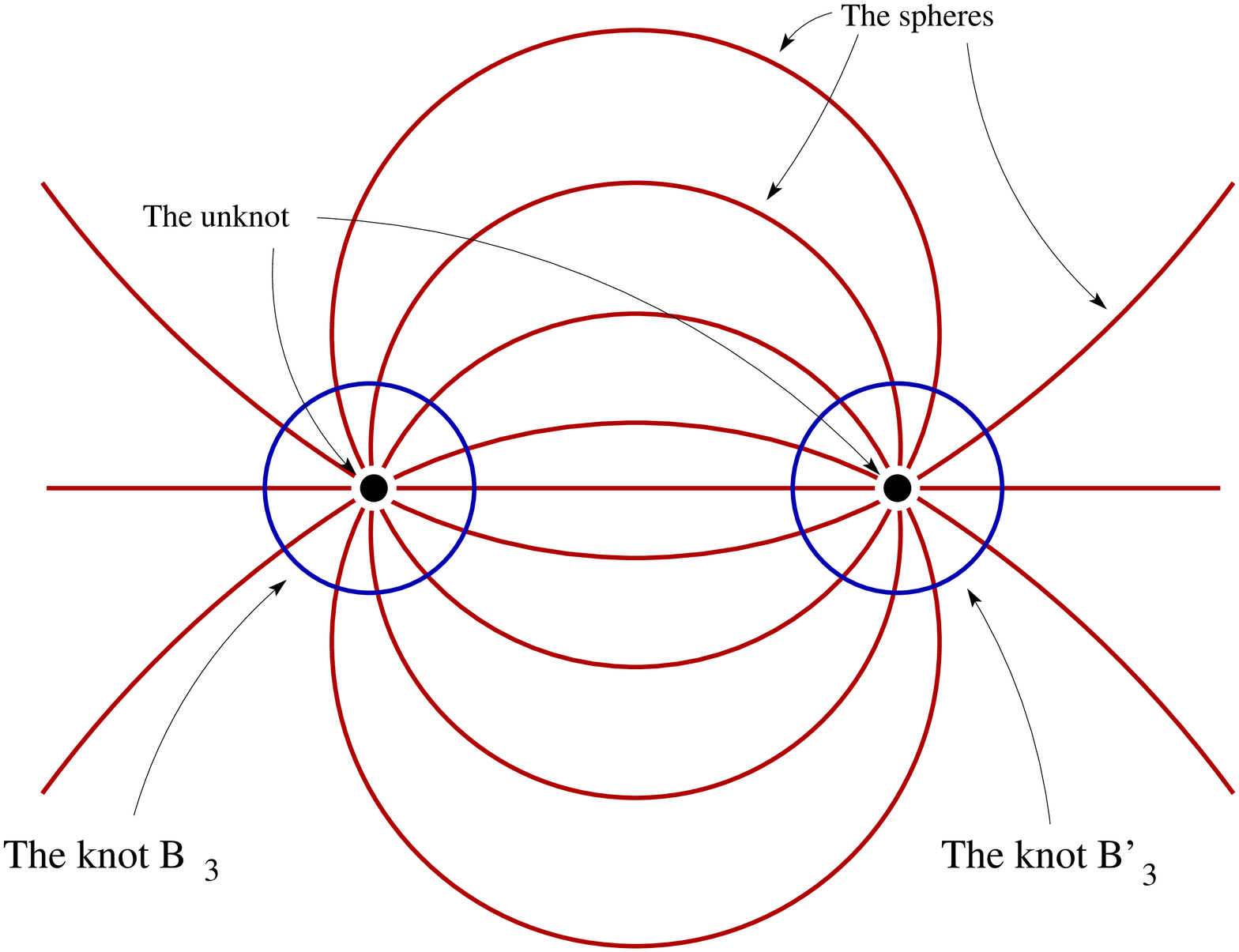}
\caption{A picture of $\stwo\times\sone$ with its spheres $\stwo\times\{*\}$ pictured in red.}
\label{Fig:fibersandknot}
\end{figure}
We may inductively continue this process to generate surgery diagrams for each
bundle $\Sigma_g\times\sone$. Applying a couple of handle slides and slam dunks 
we obtain the surgery presentation of $\fge$ (cf.~Definition~\ref{fgedef}) pictured 
in Figure~\ref{Fig:surgerypicture}.
\begin{figure}[t]
\labellist\small\hair 2pt
\endlabellist
\centering
\includegraphics[width=8cm]{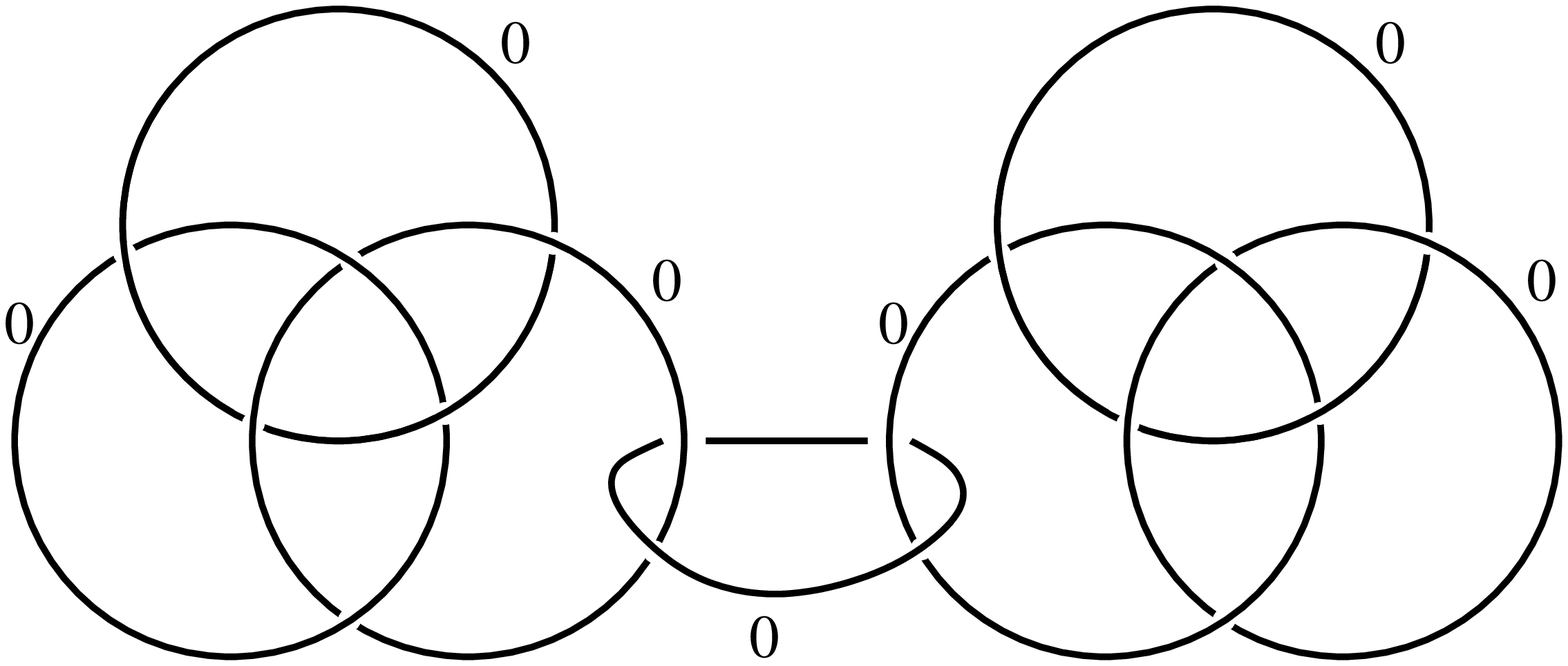}
\caption{A surgery diagram for $\Sigma_2\times\sone$.}
\label{Fig:surgerysigmatwo}
\end{figure}

\begin{proof}[Proof of Theorem~\ref{mythm:01}] 
In the following, $K^*$ will denote a fiber 
of $\fge$ (cf.~Definition~\ref{fgedef}).
In \S\ref{firstcalc} we have shown by explicit calculations, that
$\hfkhat(\fge,K^*)$ is non-zero for $g=0$ and arbitrary non-zero Euler 
number and checked that it vanishes completely for $g=e=0$. It remains
to show that the theorem holds for $g\geq1$ and $e\in\Z$:
\begin{figure}[t]
\labellist\small\hair 2pt
\endlabellist
\centering
\includegraphics[height=5.5cm]{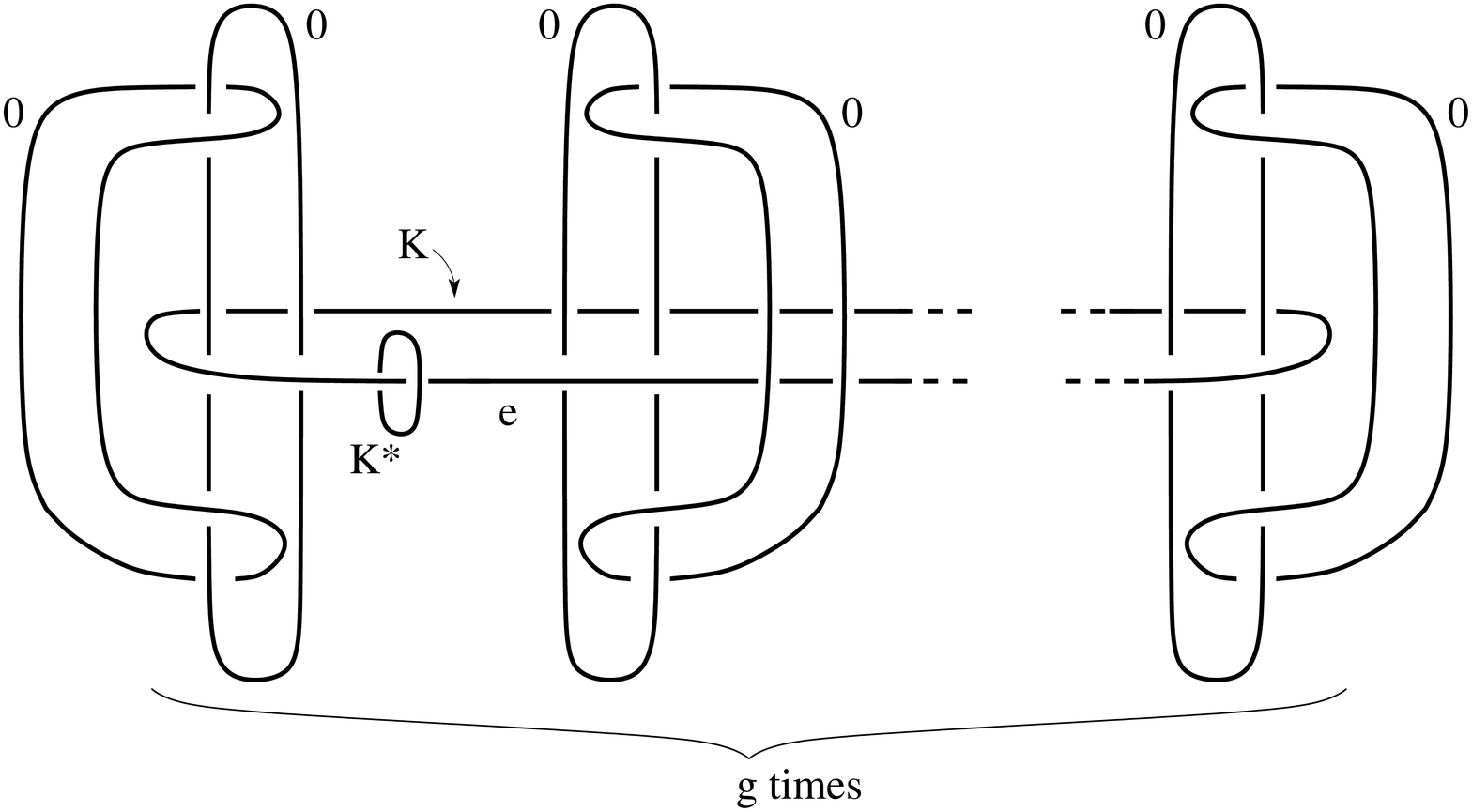}
\caption{A surgery diagram for $\fge$ with knot $K^*$.}
\label{Fig:surgerypicture}
\end{figure}
Starting from $Y=\#^{2g}(\stwo\times\sone)$ we perform an $e$-surgery along $K$ to obtain
$\fge$. We denote by $W$ the induced cobordism. 
The manifolds $Y$ and $\fge$ fit into the following surgery exact triangle.
\begin{equation}
 \xymatrix@C=1.5pc@R=0.5pc
{
\hfkhat(Y,U)\ar[r]^{\Foo_W}
&
 \hfkhat(\fge,K^*)\ar[r]
&
 \hfkhat(\fgepo,K^*)\ar^{\Hoo}@/^1.5pc/[ll]\\ &&
}\label{seq:doit2}
\end{equation}
where $U$ denotes the unknot in $Y$. We start with a careful investigation 
of $\Fhat$ in case $e$ is even: It is 
easy to see that $H_2(Y)\cong\Z^{2g}$ and that 
$H_2(\fge)\cong\Z^{2g}\oplus\Z_e$, where we introduced the convention 
that $\Z_1=\Z=\Z_0$. Using adjunction inequalities we see that $\hfkhat(Y,U)$ 
is concentrated in $\fraks_0$ and that $\hfkhat(\fge,K^;\fraks)$ can be 
non-zero for $\fraks\in\Z_e$, only. First observe, that 
\[
 \fraks\not=\mN(\fraks)
\]
for all $\fraks\in\spinc(W)$ which extend the structures $\fraks_0$ on $Y$ and
$\fraks_*\in\Z_e$ on $\fge$: If this were true for a $\spinc$-structure $\fraks$, 
this would imply that
\[
 -\left.\fraks\right|_{\fge}+PD[K]=\left.\mN(\fraks)\right|_{\fge}
 =
 \left.\fraks\right|_{\fge}
\]
which especially means that $[K]$ can be divided by two. This is not possible since
$e$ is even. Now observe, that for $\fraks\in\spinc(W)$ extending $\fraks_0$ on $Y$ 
we have that
\[
  \Foo_{W,\fraks}
  =
  \mN_{\fge}
  \circ
  \Foo_{W,\mN(\fraks)}
  \circ
  \mN_{Y}
  =
  \mN_{\fge}
  \circ
  \Foo_{W,\mN(\fraks)}.
\]
Fix a $\spinc$-structure $\fraks_*$ in the $\Z_e$-part of $H_2(\fge)$. Then denote 
by $\mS$ the set of $\spinc$-structures of $W$ that extend $\fraks_0$ on $Y$ 
and $\fraks_*$ on $\fge$. 
The map $\Foo_{W}$ restricted to $\hfkhat(\fge,K^*;\mN(\fraks_*))$
can be written as
\[
 A=\sum_{\fraks\in\mS}\Foo_{W,\mN(\fraks)}.
\]
The map $\Foo_{W}$ restricted to $\hfkhat(\fge,K^*;\fraks_*)$
can be written as
\[
 \sum_{\fraks\in\mS}\Foo_{W,\fraks}=\mN\circ A.
\]
Furthermore, we know that there is an isomorphism
\[
 \hfkhat(\fge,K^*;\fraks)\cong\hfkhat(\fge,K^*;\mN(\fraks)).
\]
Thus, the map $\Foo_{W}$ restricted to the sum
\[
 \hfkhat(\fge,K^*;\fraks)\oplus\hfkhat(\fge,K^*;\mN(\fraks))
\]
equals the map 
\[
  \mN\circ A\oplus A
  \co
  \hfkhat(Y,U;\fraks_0)
  \lra
  \hfkhat(\fge,K^*;\fraks)\oplus\hfkhat(\fge,K^*;\mN(\fraks))
\] 
with the property that $A(x)=0$ if and only if $\mN\circ A(x)=0$. Thus, 
$\mN\circ A\oplus A$ cannot be surjective. Consequently, $\Foo_{W}$ cannot 
be surjective unless the group $\hfkhat(\fge,K^*)$ is zero, completely. 
In that case, the rank of $\hfkhat(\fgepo,K^*)$ is non-zero by exactness 
of the sequence. Otherwise, since $\Foo_{W}$ is not surjective, it cannot 
be an isomorphism and, thus, $\hfkhat(\fgepo,K^*)$ is non-zero. Our 
considerations were independent of $g$, we just required that $e$ was 
even. So, we have shown that the rank of $\hfkhat(\fge,K^*)$ for $e$ 
odd cannot be zero.\vspace{0.3cm}\\
Now suppose that $e$ is odd. This means that $e+1$ is even. We want to see, that 
$\Hoo$ cannot be injective: Denote by $W'$ the cobordism associated to the map $\Hoo$.
Here, again, the map
\begin{equation}
  \mN
  \co
  \spinc(W')
  \lra
  \spinc(W')
  \label{spinmap:fpfree}
\end{equation}
has no fixed points. Given a $\spinc$-structure $\fraks_*$ in 
the $\Z_e$-part of $H_1(\fgepo)$ and denote by $\mS$ the set 
of $\spinc$-structures on $W'$ that extend $\fraks_*$ and $\fraks_0$
on $Y$. We see that
\[
  \left.\Hoo
  \right|_{\widehat{\mbox{\rm\tiny HFK}}
  \scriptstyle(\fgepo,K^*;\mN(\fraks_*))}
  =
  \sum_{\fraks\in\mS}\Hoo_{\mN(\fraks)}
  =:A.
\]
Similarly, since $(\ref{spinmap:fpfree})$ is without fixed points, the we have the following
equality
\[
  \left.\Hoo\right|_{\widehat{\mbox{\rm\tiny HFK}}\scriptstyle(\fgepo,K^*;\fraks_*)}
  =
  \sum_{\fraks\in\mS}\Hoo_{\fraks}=A\circ\mN.  
\]
Thus, the map restricted to the groups associated to the $\spinc$-structures 
$\fraks_*$ and $\mN(\fraks_*)$ equals
\[
 A\circ\mN+A
 \co
 \hfkhat(\fgepo,K^*;\fraks_*)
 \oplus
 \hfkhat(\fgepo,K^*;\mN(\fraks_*))
 \lra
 \hfkhat(Y,U;\fraks_0).
\]
This map cannot be injective, since for a non-zero $g\in\hfkhat(\fgepo,K^*;\fraks)$
the element $g\oplus -\mN(g)$ goes to zero, unless the group $\hfkhat(\fgepo,K^*)$ 
vanishes, completely. If the group vanishes completely, by exactness of the sequence, 
we know that $\hfkhat(\fge,K^*)$ is non-zero. In the non-vanishsing case, however, 
$\Hoo$ is not injective and, thus, by exactness of the sequence $\hfkhat(\fge,K^*)$
is non-zero. This proves Theorem~\ref{mythm:01}.
\end{proof}

\bibliographystyle{amsplain}

\providecommand{\bysame}{\leavevmode\hbox to3em{\hrulefill}\thinspace}
\providecommand{\MR}{\relax\ifhmode\unskip\space\fi MR }
\providecommand{\MRhref}[2]{%
  \href{http://www.ams.org/mathscinet-getitem?mr=#1}{#2}
}
\providecommand{\href}[2]{#2}

\end{document}